\DeclareSymbolFont{cyrletters}{OT2}{wncyr}{m}{n}
\DeclareMathSymbol{\Sha}{\mathalpha}{cyrletters}{"58}
\newcommand{\C}{\mathbb{C}}
\newcommand{\F}{\mathbb{F}}
\newcommand{\PP}{\mathbb{P}}
\newcommand{\Q}{\mathbb{Q}}
\newcommand{\R}{\mathbb{R}}
\newcommand{\Z}{\mathbb{Z}}
\newcommand{\eps}{\varepsilon}
\newcommand{\bmu}{\ensuremath{\boldsymbol\mu}}
\newcommand{\calC}{\mathcal{C}}
\newcommand{\calF}{\mathcal{F}}
\newcommand{\calZ}{\mathcal{Z}}
\DeclareMathOperator{\lcm}{lcm}
\DeclareMathOperator{\ord}{ord}
\DeclareMathOperator{\Res}{Res}
\newcommand{\tors}{{\operatorname{tors}}}
\newcommand{\GL}{\operatorname{GL}}
\newcommand{\PGL}{\operatorname{PGL}}
\newcommand{\PSL}{\operatorname{PSL}}
\newcommand{\pws}[1]{[\![#1]\!]}
\newenvironment{sm}{\left(\begin{smallmatrix}}{\end{smallmatrix}\right)}
\newtheorem{theorem}{Theorem}
\newtheorem{lemma}[theorem]{Lemma}
\newtheorem{corollary}[theorem]{Corollary}
\newtheorem{proposition}[theorem]{Proposition}
\theoremstyle{definition}
\newtheorem{definition}[theorem]{Definition}
\newtheorem{conjecture}[theorem]{Conjecture}
\newtheorem{examples}[theorem]{Examples}
\theoremstyle{remark}
\definecolor{darkgreen}{rgb}{0,0.5,0}
\begin{document}

\title[Simultaneous torsion]%
      {Simultaneous torsion in the Legendre family}

\author{Michael Stoll}
\address{Mathematisches Institut,
         Universit\"at Bayreuth,
         95440 Bayreuth, Germany.}
\email{Michael.Stoll@uni-bayreuth.de}
\urladdr{http://www.mathe2.uni-bayreuth.de/stoll/}

\date{October 4, 2015}

\begin{abstract}
  We improve a result due to Masser and Zannier, who showed that the set
  \[ \{\lambda \in \C \setminus \{0,1\} :
         (2,\sqrt{2(2-\lambda)}), (3,\sqrt{6(3-\lambda)}) \in (E_\lambda)_\tors\} \]
  is finite, where $E_\lambda \colon y^2 = x(x-1)(x-\lambda)$ is the Legendre
  family of elliptic curves. More generally, denote by $T(\alpha, \beta)$,
  for $\alpha, \beta \in \C \setminus \{0,1\}$, $\alpha \neq  \beta$, the set
  of $\lambda \in \C \setminus \{0,1\}$ such that all points with $x$-coordinate
  $\alpha$ or~$\beta$ are torsion on~$E_\lambda$. By further results of
  Masser and~Zannier, all these sets are finite. We present a fairly elementary argument
  showing that the set~$T(2,3)$ in question is actually empty. More generally, we obtain
  an explicit description of the set of parameters~$\lambda$ such that
  the points with $x$-coordinate $\alpha$ and $\beta$ are simultaneously torsion,
  in the case that $\alpha$ and~$\beta$ are algebraic numbers that not 2-adically close.

  We also improve another result due to Masser and Zannier dealing with the case
  that $\Q(\alpha, \beta)$ has transcendence degree~$1$. In this case we show
  that $\#T(\alpha, \beta) \le 1$ and that we can decide whether the set is empty
  or not, if we know the irreducible polynomial relating $\alpha$ and~$\beta$.
  This leads to a more precise description of~$T(\alpha, \beta)$ also in the
  case when both $\alpha$ and~$\beta$ are algebraic. We performed extensive
  computations that support several conjectures, for example that there should
  be only finitely many pairs $(\alpha, \beta)$ such that $\#T(\alpha, \beta) \ge 3$.
\end{abstract}

\maketitle


\section{Introduction}

Let
\[ E_\lambda \colon y^2 = x(x-1)(x-\lambda) \]
be the Legendre family of elliptic curves over~$\C$.
For $\alpha \in \C \setminus \{0,1\}$ let $P_\alpha(\lambda)$ be a
point on~$E_\lambda$ with $x$-coordinate~$\alpha$ and set
\[ T(\alpha) = \{\lambda \in \C \setminus \{0,1\}
                   : P_\alpha(\lambda) \in (E_\lambda)_\tors\} \,.
\]
Write $T(\alpha,\beta) = T(\alpha) \cap T(\beta)$.
In~\cites{MasserZannier2008,MasserZannier2010},
Masser and Zannier show that $T(2,3)$ is finite.
This was the first
step in a series of successively more general finiteness results on the set
of parameters such that a given section in a family of two-dimensional
(semi-)abelian varieties is torsion,
see~\cites{MasserZannier2012,MasserZannier2014,MasserZannierPreprint}
(or see the book~\cite{ZannierBook} for an overview).
An alternative, `dynamical' proof of the results
of~\cites{MasserZannier2010,MasserZannier2012} is given by
de~Marco, Wang and Ye in a recent paper~\cite{deMarcoWangYe}.

In this note, we give a $2$-adic proof that $T(2,3)$ is actually empty.
The proof is rather elementary and shows more generally that (for example)
$2$ and~$3$ can be replaced by any pair consisting of an even and an odd
integer (different from $0$ and~$1$). We also give examples of numbers $\alpha$
and~$\beta$ such that $T(\alpha,\beta)$ has exactly one or two elements.
We then give a partial result along the same lines for the two-parameter
Weierstrass family $y^2 = x^3 + Ax + B$.

Returning to the Legendre family, we consider the sets~$T(\alpha, \beta)$
when $\alpha$ and~$\beta$
generate a field of transcendence degree~$1$ over~$\Q$ (the case of
transcendence degree~$2$ is trivial; we have $T(\alpha, \beta) = \emptyset$
in this case). In~\cite{MasserZannier2013}, Masser and Zannier show that,
if we are given an irreducible polynomial $F$ over~$\Q$ such that
$F(\alpha, \beta) = 0$, we can effectively compute the set~$T(\alpha, \beta)$,
and they give a bound on its size:
$\#T(\alpha, \beta) \le 6 \cdot (12 \deg F)^{32}$. We improve this
result considerably; in fact, we prove the best possible bound
$\#T(\alpha, \beta) \le 1$ and also provide better upper bounds for the
occurring torsion orders, leading to a more efficient determination
of the set. We also obtain a fairly precise description of~$T(\alpha, \beta)$
in general. See Proposition~\ref{P:general}. This more precise description
is then used as the basis for extensive computations studying
pairs~$(\alpha, \beta)$ such that $\#T(\alpha, \beta) \ge 2$.
These computations exhibited only a small number of such pairs where
the set~$T(\alpha, \beta)$ has three or more elements, and so we conjecture
that the set of such pairs is actually finite (Conjecture~\ref{conj2}).
Based on our computations, we also conjecture that the heights of
$\alpha$ and~$\beta$ are uniformly bounded when $\#T(\alpha, \beta) \ge 2$
(Conjecture~\ref{conj4}).

This note is organized as follows. We first prove a general statement on
the $2$-adic behavior of elements in a ring defined by a certain kind of
recurrence relation. We then apply this to the division polynomials of the
Legendre elliptic curve. This allows us to deduce `mod~$2$' information on
the set~$T(\alpha)$, for $\alpha \in \bar{\Q}$, leading to our first main result
that $T(\alpha,\beta) \subseteq \{\alpha, \beta\}$ if $\alpha$ and~$\beta$
are distinct `mod~$2$' (see Corollary~\ref{cor1})
or even `mod~$4$' (Corollary~\ref{C:rat} for rational $\alpha$, $\beta$).
We use this to show that the intersection of~$T(\alpha)$ with the set
of all roots of unity can be determined effectively; the set has size
at most~3, and we determine all~$\alpha$ that reach this bound.
We also apply our approach to
the Weierstrass family $y^2 = x^3 + Ax + B$. This leads to a partial
result for the set of parameters~$(A,B)$ such that three $x$-coordinates
are simultaneously torsion. The restriction is that we need to assume that $B$
is integral at~$2$. See Corollary~\ref{C:W}.
We then turn to the case of transcendence degree~$1$ in the Legendre family
and prove our second main result.
The description of~$T(\alpha, \beta)$ obtained as a consequence
of this result is then used as the basis for the computations mentioned
above. We report on the results and state the conjectures already mentioned.

\subsection*{Acknowledgments}

I would like to thank the organizers of the \emph{Second ERC Research Period
on Diophantine Geometry} for inviting me to attend this event; the first result
presented here was obtained during the meeting. I would also like to thank
David Masser and Umberto Zannier for fruitful discussions. The computations
we report on in Sections \ref{S:trdeg1} and~\ref{S:size} were performed
with the computer algebra system Magma~\cite{Magma}.


\section{2-adic behavior of division polynomials}

Let $R$ be a commutative ring and fix elements $f, g \in R$.
Let $(h_n)_{n \ge 1}$ be a sequence of elements of~$R$ satisfying
\[ h_1 = 1, \quad h_2 = 1, \quad h_3 \equiv -g^2 \bmod 4 R \quad\text{and}\quad
   h_4 \equiv 2 g^3 \bmod 4 R
\]
and the recurrence relations (for $m \ge 3, 1, 2$, respectively)
\begin{align*}
  h_{2m} &= h_m \bigl(h_{m+2} h_{m-1}^2 - h_{m-2} h_{m+1}^2\bigr) \\
  h_{4m+1} &= 4 f h_{2m+2} h_{2m}^3 - h_{2m-1} h_{2m+1}^3 \\
  h_{4m-1} &= h_{2m+1} h_{2m-1}^3 - 4 f h_{2m-2} h_{2m}^3 \,.
\end{align*}
(Relations of this form are satisfied by the division polynomials of an
elliptic curve; we will apply the results of this section soon in this setting.)

We define, for $n \in \Z_{>0}$,
\[ d(n) = \left\lfloor\frac{n^2-1}{4}\right\rfloor \]
and
\[ e(n) = \max\{0, v_2(n)-1\} \,, \]
where $v_2$ denotes the $2$-adic valuation.

\begin{proposition} \label{P:rec}
  For $n \in \Z_{>0}$, we have
  \[ h_n \equiv 2^{e(n)} g^{d(n)} \bmod 2^{e(n)+1} R \,. \]
\end{proposition}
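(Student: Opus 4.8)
The plan is to prove the congruence by strong induction on $n$, handling the residue of $n$ modulo $4$ separately and using the three recurrence relations to pass from smaller indices to larger ones. First I would observe that the claim can be restated modulo the relevant power of $2$: for each $n$ one must show simultaneously that $h_n \equiv 2^{e(n)} g^{d(n)}$ in $R/2^{e(n)+1}R$, so the inductive hypothesis must be used with the (possibly larger) modulus appropriate to the index being fed into the recurrence. I would first nail down the elementary arithmetic of the two auxiliary functions: namely how $d(2m), d(4m\pm 1)$ relate to $d(m), d(m\pm 1), d(m\pm 2), d(2m), d(2m\pm 1), d(2m\pm 2)$, and similarly for $e$. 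The key numerics are that $e(4m\pm 1) = 0$ (odd index), $e(2m) = e(m)+1$ when $m$ is even and $=0$ when $m$ is odd, and that $d$ is, up to the floor, quadratic; these identities are routine but must be recorded because the exponents of $g$ and of $2$ on both sides of each recurrence have to be matched exactly.

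Next I would set up the induction. The base cases $n = 1, 2, 3, 4$ are given by hypothesis (note $d(1)=0$, $d(2)=0$, $d(3)=2$, $d(4)=3$ and $e(1)=e(2)=e(3)=0$, $e(4)=1$, which is exactly what the stated congruences for $h_1, \dots, h_4$ assert). For the inductive step, given $n \ge 5$ I would write $n$ in one of the forms $2m$ ($m \ge 3$), $4m+1$ ($m \ge 1$), or $4m-1$ ($m \ge 2$) and substitute the inductive hypotheses for the $h$'s appearing on the right-hand side of the corresponding relation, each taken modulo the power of $2$ that the induction supplies for that particular smaller index. The point is then a bookkeeping computation: e.g. in $h_{2m} = h_m(h_{m+2}h_{m-1}^2 - h_{m-2}h_{m+1}^2)$ one substitutes $h_j \equiv 2^{e(j)}g^{d(j)}$, checks that the two products $h_{m+2}h_{m-1}^2$ and $h_{m-2}h_{m+1}^2$ have the same leading term $2^{\ast}g^{\ast}$ (so the leading terms in the difference cancel, producing an extra factor of $2$ that accounts for the jump $e(2m) = e(m)+1$ when $m$ is even), and that what survives has the right valuation and $g$-exponent. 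The analogous check for $h_{4m\pm 1}$ is easier because the index is odd, so $e = 0$ and one only needs the congruence modulo $2R$; here the factor $4f$ kills one of the two terms modulo $2$, leaving $h_{2m\pm 1}h_{2m\mp 1}^3$ (in the appropriate pattern) which by induction is $\equiv g^{3d(2m+1)+d(2m-1)} \pmod{2R}$ or its counterpart, and one checks this exponent equals $d(4m\pm 1)$.

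The main obstacle will be the case $n = 2m$ with $m$ even, where the claimed modulus $2^{e(2m)+1} = 2^{e(m)+2}R$ is strictly finer than what a naive substitution gives: a priori the inductive hypothesis controls $h_m$ only mod $2^{e(m)+1}R$, $h_{m\pm 1}$ only mod $2R$ (they have odd or smaller-valuation indices), etc., so the product $h_m(h_{m+2}h_{m-1}^2 - h_{m-2}h_{m+1}^2)$ is a priori controlled only mod $2^{e(m)+1}R$, one power short. The resolution is that the bracketed difference is controlled one power better than the individual terms precisely because the leading terms cancel: writing $h_{m\pm 2} = 2^{e(m\pm2)}g^{d(m\pm2)} + 2^{e(m\pm2)+1}R$ and similarly for $h_{m\pm1}$, one shows $h_{m+2}h_{m-1}^2 \equiv h_{m-2}h_{m+1}^2 \pmod{2^{c+1}R}$ where $2^c$ is their common leading power of $2$, using the identities $e(m+2)+2e(m-1) = e(m-2)+2e(m+1) = c$ and $d(m+2)+2d(m-1) = d(m-2)+2d(m+1)$ (both verified from the arithmetic of $d$ and $e$ recorded at the start); multiplying by $h_m$, whose leading term is $2^{e(m)}g^{d(m)}$, then gives the congruence to the full modulus $2^{e(m)+c+1}R = 2^{e(2m)+1}R$. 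So the crux is really the exact cancellation of leading terms in the difference, and establishing that is where the bulk of the computation lies; everything else is organized bookkeeping with the functions $d$ and $e$.
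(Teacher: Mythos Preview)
Your plan has a genuine gap in the case $n = 2m$ with $m$ even, precisely the case you flag as the main obstacle. The resolution you propose does not go through, for two reasons.

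First, the identity $e(m+2) + 2e(m-1) = e(m-2) + 2e(m+1)$ is false for even~$m$ in general: take $m = 6$, where $e(8) = 2$ but $e(4) = 1$ (and $e(5) = e(7) = 0$). So the two products $h_{m+2}h_{m-1}^2$ and $h_{m-2}h_{m+1}^2$ need not have the same leading power of~$2$.

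Second, and more fundamentally, even when the leading terms do agree (for instance when $m \equiv 0 \pmod 4$, so that $e(m\pm1) = e(m\pm2) = 0$), the cancellation only tells you that the difference lies in~$2R$; it does \emph{not} tell you what the difference is modulo~$4R$. Writing $h_j = g^{d(j)} + 2r_j$ for $j \in \{m\pm1, m\pm2\}$, each product is $g^D + 2(\text{something})$ with the same exponent~$D$, so the difference is $2(\text{something} - \text{something}')$, and the inductive hypothesis gives no control over this element modulo~$4R$. But that control is exactly what is required: to deduce $h_{2m} \equiv 2^{e(2m)} g^{d(2m)} \pmod{2^{e(2m)+1}R}$ from $h_m \equiv 2^{e(m)} g^{d(m)} \pmod{2^{e(m)+1}R}$ you need the bracket modulo~$4R$, not merely its membership in~$2R$. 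In your notation you conflate ``the difference is divisible by $2^{c+1}$'' with ``the difference is known modulo $2^{c+1}$''; these differ by exactly the missing power of~$2$, and your asserted equality $2^{e(m)+c+1}R = 2^{e(2m)+1}R$ is therefore off by one.

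The paper fixes this by first proving, by induction on~$n$, a \emph{stronger} set of congruences modulo~$4R$ that keep track of signs: $h_{2m+1} \equiv (-1)^m g^{d(2m+1)}$, $h_{4m+2} \equiv (-1)^m g^{d(4m+2)}$, $h_{8m+4} \equiv 2 g^{d(8m+4)}$, $h_{8m} \equiv 0$. The signs are the missing ingredient: for $n = 2^{e+1}m$ with $e \ge 2$ and $m$ odd, the two products in the bracket carry \emph{opposite} signs modulo~$4R$, so their difference is $2 g^{D} \pmod{4R}$ rather than just an unspecified element of~$2R$. With this mod-$4$ information established, a second induction on~$e(n)$ (not on~$n$) then climbs to arbitrary $2$-adic valuation. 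So the idea you are missing is that the statement of the proposition is too weak to serve as its own inductive hypothesis; one must first strengthen to a full mod-$4$ statement with signs.
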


\begin{proof}
  We first determine $h_n$ mod~$4 R$:
  We have
  \begin{align*}
    h_{2m+1} &\equiv (-1)^m g^{d(2m+1)} \bmod 4 R \\
    h_{4m+2} &\equiv (-1)^m g^{d(4m+2)} \bmod 4 R \\
    h_{8m+4} &\equiv 2 g^{d(8m+4)} \bmod 4 R \\
    h_{8m}   &\equiv 0 \bmod 4 R
  \end{align*}

  The statements are correct by assumption for $h_n$ with $n \le 4$.
  We proceed by induction using the recurrence relations.
  All congruences below are mod~$4 R$.
  \begin{align*}
    h_{4m+1} &\equiv -h_{2m-1} h_{2m+1}^3 \\
                &\equiv -(-1)^{m-1} (-1)^{3m} g^{d(2m-1)+3d(2m+1)}
                  = (-1)^{2m} g^{d(4m+1)} \\
    h_{4m-1} &\equiv h_{2m+1} h_{2m-1}^3 \\
                &\equiv (-1)^m (-1)^{3(m-1)} g^{d(2m+1)+3d(2m-1)}
                  = (-1)^{2m-1} g^{d(4m-1)} \\
    h_{8m+2} &= h_{4m+1} (h_{4m+3} h_{4m}^2 - h_{4m-1} h_{4m+2}^2) \\
                &\equiv (-1)^{2m}(-(-1)^{2m-1})
                        g^{d(4m+1)+d(4m-1)+2d(4m+2)}
                  = (-1)^{2m} g^{d(8m+2)} \\
    h_{8m-2} &= h_{4m-1} (h_{4m+1} h_{4m-2}^2 - h_{4m-3} h_{4m}^2) \\
                &\equiv (-1)^{2m-1} (-1)^{2m}
                        g^{d(4m-1)+d(4m+1)+2d(4m-2)}
                  = (-1)^{2m-1} g^{d(8m-2)} \\
    h_{16m+4} &= h_{8m+2} (h_{8m+4} h_{8m+1}^2 - h_{8m} h_{8m+3}^2) \\
                  &\equiv 2 g^{d(8m+2)+d(8m+4)+2d(8m+1)}
                  = 2 g^{d(16m+4)} \\
    h_{16m-4} &= h_{8m-2} (h_{8m} h_{8m-3}^2 - h_{8m-4} h_{8m-1}^2) \\
                  &\equiv 2 g^{d(8m-2)+d(8m-4)+2d(8m-1)}
                  = 2 g^{d(16m-4)} \\
    h_{16m+8} &= h_{8m+4} (h_{8m+6} h_{8m+3}^2 - h_{8m+2} h_{8m+5}^2) \\
                  &\equiv 2\bigl(g^{d(8m+4)+d(8m+6)+2d(8m+3)}
                                - g^{d(8m+4)+d(8m+2)+2d(8m+5)}\bigr)
                  \equiv 0 \\
    h_{16m} &= h_{8m} (h_{8m+2} h_{8m-1}^2 - h_{8m-2} h_{8m+1}^2)
                \equiv 0
  \end{align*}
  The relations $d(2m-1)+3d(2m+1) = d(4m+1)$ etc.\ are easily verified.

  This shows the claim when $e(n) \le 1$. We now show by induction on~$e(n)$
  that it holds in general. So let $n = 2^{e+1} m$ with $e \ge 2$ and $m$~odd. Then
  \[ h_n = h_{2^e m} (h_{2^e m+2} h_{2^e m-1}^2 - h_{2^e m-2} h_{2^e m+1}^2) \,. \]
  The second factor is (mod~$4 R$)
  \begin{align*}
    h_{2^e m+2} h_{2^e m-1}^2 - h_{2^e m-2} h_{2^e m+1}^2
      &\equiv (-1)^{2^{e-2} m} g^{d(2^e m+1) + 2d(2^e m-1)} \\
      & \qquad\qquad {} - (-1)^{2^{e-2} m-1} g^{d(2^e m-2) + 2d(2^e m+1)} \\
      &\equiv 2 g^{d(2^{e+1} m) - d(2^e m)} \,,
  \end{align*}
  whereas the first is
  \[ h_{2^e m} \equiv 2^{e-1} g^{d(2^e m)} \bmod 2^e R \,. \]
  Multiplying gives the desired congruence
  \[ h_{2^{e+1} m} \equiv 2^e g^{d(2^{e+1} m)} \bmod 2^{e+1} R \,. \qedhere \]
\end{proof}


\section{Application to the Legendre family}

We consider the Legendre family $E_\lambda \colon y^2 = x(x-1)(x-\lambda)$
of elliptic curves.
We denote by $\psi_n(\lambda,x)$ the $n$th reduced division polynomial of~$E_\lambda$;
its roots are the $x$-coordinates of the points of order dividing~$n$ and $> 2$.
These polynomials are related to the `bicyclotomic polynomials'~$B^*_n(x,T)$
of Masser and Zannier~\cite{MasserZannier2013} via
\[ \psi_n(\lambda, x) = \prod_{2 \neq d \mid n} B^*_d(x, \lambda) \,. \]
We have $\psi_1 = \psi_2 = 1$,
\begin{align*}
  \psi_3(\lambda, x) &= 3 x^4 - 4(\lambda + 1) x^3 + 6 \lambda x^2 - \lambda^2 \\
  \psi_4(\lambda, x) &= 2 (x^2 - \lambda) (x^2 - 2 x + \lambda) (x^2 - 2 \lambda x + \lambda)
\end{align*}
and
\begin{align*}
  \psi_{2m}(\lambda, x)
     &= \psi_m(\lambda, x) \bigl(\psi_{m+2}(\lambda, x) \psi_{m-1}(\lambda, x)^2
                                  - \psi_{m-2}(\lambda, x) \psi_{m+1}(\lambda, x)^2\bigr) \\
  \psi_{4m+1}(\lambda, x)
     &= 16 x^2 (x-1)^2 (x-\lambda)^2 \psi_{2m+2}(\lambda, x) \psi_{2m}(\lambda, x)^3
          - \psi_{2m-1}(\lambda, x) \psi_{2m+1}(\lambda, x)^3 \\
  \psi_{4m-1}(\lambda, x)
     &= \psi_{2m+1}(\lambda, x) \psi_{2m-1}(\lambda, x)^3
         - 16 x^2 (x-1)^2 (x-\lambda)^2 \psi_{2m-2}(\lambda, x) \psi_{2m}(\lambda, x)^3
\end{align*}
It follows that $\psi_n(\lambda, x) \in \Z[\lambda, x]$ for all $n \ge 1$.

We note that
\begin{align*}
  \psi_3(\lambda, x) &\equiv -(\lambda - x^2)^2 \bmod 4 \Z[\lambda, x] \qquad\text{and} \\
  \psi_4(\lambda, x) &\equiv 2 (\lambda - x^2)^3 \bmod 4 \Z[\lambda, x] \,.
\end{align*}
So we can apply Proposition~\ref{P:rec} with $R = \Z[\lambda, x]$,
$f = 4 x^2 (x-1)^2 (x-\lambda)^2$ and $g = \lambda - x^2$. This gives the following.

\begin{proposition} \label{P:psi}
  For $n \in \Z_{>0}$, we have
  \[ \psi_n(\lambda, x) \equiv 2^{e(n)} (\lambda - x^2)^{d(n)}
                               \bmod 2^{e(n)+1} \Z[\lambda, x] \,.
  \]
  Furthermore, $\deg_{\lambda} \psi_n(\lambda, x) = d(n)$ and
  $\deg \psi_n(\lambda, x) = \deg_x \psi_n(\lambda, x) = 2 d(n)$,
  where $\deg$ denotes the total degree.
\end{proposition}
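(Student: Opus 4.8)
The congruence statement is essentially immediate from Proposition~\ref{P:rec}. The plan is to verify that the hypotheses of Proposition~\ref{P:rec} are met with $R = \Z[\lambda,x]$, $f = 4x^2(x-1)^2(x-\lambda)^2$ and $g = \lambda - x^2$. Comparing the recurrence for $\psi_{2m}, \psi_{4m+1}, \psi_{4m-1}$ displayed above with the abstract recurrence for $(h_n)$, one sees that they match exactly once one writes the coefficient $16 x^2(x-1)^2(x-\lambda)^2$ as $4f$. The initial conditions $\psi_1 = \psi_2 = 1 = h_1 = h_2$ are visible, and the congruences $\psi_3 \equiv -(\lambda - x^2)^2 = -g^2$ and $\psi_4 \equiv 2(\lambda - x^2)^3 = 2g^3 \bmod 4\Z[\lambda,x]$ are exactly the remaining hypotheses $h_3 \equiv -g^2$, $h_4 \equiv 2g^3 \bmod 4R$. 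These two congruences are noted just before the proposition (and are a quick direct check from the explicit formulas for $\psi_3$ and $\psi_4$: expanding $-(\lambda-x^2)^2 = -\lambda^2 + 2\lambda x^2 - x^4$ and reducing the coefficients of $\psi_3$ mod~$4$, and factoring $\psi_4/2 = (x^2-\lambda)(x^2-2x+\lambda)(x^2-2\lambda x+\lambda) \equiv (x^2-\lambda)^3 \bmod 2$). Hence Proposition~\ref{P:rec} applies verbatim and yields $\psi_n \equiv 2^{e(n)}(\lambda - x^2)^{d(n)} \bmod 2^{e(n)+1}\Z[\lambda,x]$.

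It remains to establish the degree claims. For $\deg_\lambda \psi_n = d(n)$: the congruence just proved shows that the coefficient of $\lambda^{d(n)}$ in $\psi_n$ is $\pm 1$ (up to the power of~$2$) modulo~$2$, so in particular nonzero, giving $\deg_\lambda \psi_n \ge d(n)$; for the reverse inequality I would argue by induction on~$n$ using the recurrences, checking that each recurrence is consistent with $\deg_\lambda \psi_k \le d(k)$ — e.g. $\deg_\lambda$ of the $\psi_{2m}$ relation is at most $\max\{d(m)+d(m+2)+2d(m-1),\, d(m)+d(m-2)+2d(m+1)\}$, which one verifies equals $d(2m)$, and similarly for the $4m\pm 1$ relations, where the extra factor $x^2(x-1)^2(x-\lambda)^2$ contributes $2$ to the $\lambda$-degree. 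These are the same elementary identities among values of~$d$ already invoked in the proof of Proposition~\ref{P:rec} (``$d(2m-1)+3d(2m+1) = d(4m+1)$ etc.'').

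For the total degree and the $x$-degree: I would again induct on~$n$ using the recurrences, showing simultaneously that $\psi_n$ is homogeneous-ish in the sense that $\deg_x \psi_n = \deg \psi_n = 2d(n)$ and that the top-degree part behaves predictably. Here the key observation is that $g = \lambda - x^2$ has total degree~$2$ and $\deg_x g = 2$, so the congruence gives the lower bound $\deg \psi_n \ge \deg_x \psi_n \ge 2d(n)$ (the monomial $x^{2d(n)}$ appears with coefficient $\pm 2^{e(n)}$, nonzero). For the matching upper bound $\deg \psi_n \le 2d(n)$ one tracks total degrees through the recurrences: the $\psi_{2m}$ relation gives total degree at most $\max\{2d(m)+2d(m+2)+4d(m-1),\, 2d(m)+2d(m-2)+4d(m+1)\} = 2d(2m)$, and the $4m\pm 1$ relations give at most $\max\{6 + 2d(2m+2)+6d(2m),\, 2d(2m-1)+6d(2m+1)\} = 2d(4m+1)$ (using that $x^2(x-1)^2(x-\lambda)^2$ has total degree~$6$), and similarly for $4m-1$; base cases $n \le 4$ are checked by hand.

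I expect the only real work to be the bookkeeping with the function $d(n) = \lfloor (n^2-1)/4\rfloor$ across the four arithmetic-progression cases $n \equiv 0,1,2,3 \bmod 4$ (and their doublings), verifying identities such as $d(2m) + d(2m+2) + 2d(2m-1) = d(4m+1) - 3$ — wait, more precisely the identities that make the total-degree bounds tight on the correct side. None of this is conceptually hard; the congruence identities needed are exactly of the form already asserted to be ``easily verified'' in the proof of Proposition~\ref{P:rec}, just carried out with total degree ($=2d$ plus contributions from the $f$-factor) in place of the exponent of~$g$. The main obstacle, such as it is, is simply being careful that the leading terms do not cancel — which is guaranteed precisely because the mod-$2$ reduction pins down the leading coefficient as $\pm 2^{e(n)}$, a $2$-adic unit times a power of~$2$, hence nonzero in $\Z[\lambda,x]$.
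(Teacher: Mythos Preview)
Your proposal is correct and follows essentially the same route as the paper: the congruence is obtained by verifying the hypotheses of Proposition~\ref{P:rec} (which the paper does in the paragraph preceding the statement), the upper bounds on $\deg_\lambda$, $\deg_x$, and total degree are established by induction through the recurrences using the arithmetic identities for~$d(n)$, and the matching lower bounds come from the fact that the mod-$2^{e(n)+1}$ reduction already has the full degree. The paper compresses the degree argument to two sentences, but your more explicit tracking of the bounds through the recurrences is exactly what ``follow easily by induction'' unpacks to.
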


\begin{proof}
  The congruence follows from Proposition~\ref{P:rec}.
  The upper bounds $\deg_{\lambda} \psi_n(\lambda, x) \le d(n)$
  and $\deg_x \psi_n(\lambda, x) \le \deg \psi_n(\lambda, x) \le 2 d(n)$
  follow easily by induction, using the recurrence relations.
  Since the reduction of $\psi_n(\lambda, x)$ modulo a suitable power of~$2$
  has $\lambda$-degree~$d(n)$ and $x$-degree~$2 d(n)$,
  we actually have equality.
\end{proof}

Recall the definitions
\[ T(\alpha) = \{\lambda \in \C \setminus \{0,1\} :
                        P_\alpha(\lambda) \in (E_\lambda)_\tors\}
   \quad\text{and}\quad T(\alpha,\beta) = T(\alpha) \cap T(\beta)\,.
\]
It is clear that $T(\alpha) \subseteq \bar{\Q}$ if $\alpha \in \bar{\Q}$.
More generally, if $\lambda \in T(\alpha)$, then $\lambda$ is algebraic
over~$\Q(\alpha)$ and $\alpha$ is algebraic over~$\Q(\lambda)$.
This immediately implies that $T(\alpha,\beta) = \emptyset$
whenever the transcendence degree of~$\Q(\alpha,\beta)$ is~$2$
(compare~\cite{MasserZannier2013}*{p.~636}).
We will now consider the other extreme, when $\alpha$ and~$\beta$
are both algebraic over~$\Q$.

For the following, fix an embedding $i \colon \bar{\Q} \hookrightarrow \bar{\Q}_2$.
Write $Z \subseteq \bar{\Q}$ for the subring of elements~$\alpha$ such that $i(\alpha)$
is integral and denote the natural `reduction' map
$\bar{\Q} \hookrightarrow \PP^1(\bar{\Q}_2) \to \PP^1(\bar{\F}_2)$
by~$\rho$. We write $v \colon \bar{\Q} \to \Q \cup \{\infty\}$ for the valuation
associated to~$i$, normalized such that $v(2) = 1$.

\begin{theorem} \label{T:Talpha}
  Let $\alpha \in \bar{\Q} \setminus \{0,1\}$.
  Then $T(\alpha) = \{\alpha\} \cup T'(\alpha)$,
  where $\rho\bigl(T'(\alpha)\bigr) \subseteq \{\rho(\alpha^2)\}$.
\end{theorem}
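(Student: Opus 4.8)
The plan is to translate the statement "$P_\alpha(\lambda)$ is torsion" into "$\psi_n(\lambda, \alpha) = 0$ for some $n$" (together with a possible $2$-torsion condition, which accounts for $\lambda = \alpha$) and then use the congruence from Proposition~\ref{P:psi} to control the $2$-adic reduction of such $\lambda$. First I would observe that $\lambda \in T(\alpha)$ means $P_\alpha(\lambda)$ has some finite order~$n$; if $n \le 2$ then $P_\alpha(\lambda)$ is $2$-torsion, which for the Legendre curve $y^2 = x(x-1)(x-\lambda)$ forces $\alpha \in \{0, 1, \lambda\}$, hence (since $\alpha \ne 0, 1$) $\lambda = \alpha$. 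Otherwise $n \ge 3$ and $\psi_n(\lambda, \alpha) = 0$. So set $T'(\alpha) = \{\lambda \in T(\alpha) : \lambda \neq \alpha\}$; for every $\lambda \in T'(\alpha)$ there is some $n \ge 3$ with $\psi_n(\lambda, \alpha) = 0$, and $\lambda$ is algebraic over $\Q(\alpha) = \Q$, hence $\lambda \in \bar{\Q}$, so $\rho(\lambda)$ makes sense.

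Next I would fix $\lambda \in T'(\alpha)$ with $\psi_n(\lambda, \alpha) = 0$, $n \ge 3$. By Proposition~\ref{P:psi},
\[
  \psi_n(\lambda, \alpha) \equiv 2^{e(n)} (\lambda - \alpha^2)^{d(n)} \bmod 2^{e(n)+1} \Z[\lambda, \alpha]\,,
\]
but I want this as a $2$-adic statement about the \emph{values} $i(\lambda), i(\alpha) \in \bar{\Q}_2$. The key point is that the coefficients of $\psi_n - 2^{e(n)} (\lambda - x^2)^{d(n)}$, viewed as a polynomial in $\Z[\lambda, x]$, all lie in $2^{e(n)+1}\Z$. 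Substituting $x = i(\alpha)$, $\lambda = i(\lambda)$: \emph{if} $\alpha \in Z$ (i.e.\ $i(\alpha)$ is integral, equivalently $v(\alpha) \ge 0$), then also $i(\lambda)$ is a zero of $\psi_n(\cdot, i(\alpha))$, and I can argue that $i(\lambda)$ is integral (a monic-up-to-a-power-of-two relation, since by Proposition~\ref{P:psi} the leading coefficient in $\lambda$ of $\psi_n$ equals that of $2^{e(n)}(\lambda - x^2)^{d(n)}$, namely $2^{e(n)}$, so after dividing by $2^{e(n)}$ we get a monic polynomial in $\lambda$ with coefficients integral over $\Z_2[i(\alpha)]$). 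Then reducing the displayed congruence mod the maximal ideal gives $0 \equiv 2^{e(n)}(i(\lambda) - i(\alpha)^2)^{d(n)} \bmod 2^{e(n)+1}\Zbar_2$, i.e.\ $(i(\lambda) - i(\alpha)^2)^{d(n)} \equiv 0 \bmod 2\Zbar_2$, hence $v(\lambda - \alpha^2) > 0$, i.e.\ $\rho(\lambda) = \rho(\alpha^2)$, as desired. If instead $\alpha \notin Z$, so $v(\alpha) < 0$, I would run a parallel argument after clearing denominators: homogenize or rescale so that the relevant valuations become non-negative, tracking that the term $2^{e(n)} g^{d(n)}$ with $g = \lambda - x^2$ still dominates; the conclusion $\rho(\lambda) = \rho(\alpha^2)$ in $\PP^1(\bar\F_2)$ (possibly $= \infty$) persists.

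The main obstacle I anticipate is the case analysis on $v(\alpha)$ and $v(\lambda)$, i.e.\ making the passage from the congruence in the polynomial ring $\Z[\lambda, x]$ to a congruence of the $2$-adic values $i(\lambda), i(\alpha)$ fully rigorous when these values are not integral. Concretely, one must verify that $i(\lambda)$ is integral (or control its pole) given only that $i(\lambda)$ is a root of $\psi_n(\cdot, i(\alpha))$ and that $\alpha \in Z$; this uses the precise leading coefficient $2^{e(n)}$ from Proposition~\ref{P:psi} and a Newton-polygon / integral-dependence argument. Once integrality of $i(\lambda)$ is in hand, the reduction step is immediate. A secondary (but minor) point is the clean separation of the $n \le 2$ case to produce exactly the $\{\alpha\}$ term; this is just the elementary description of $2$-torsion on $E_\lambda$.
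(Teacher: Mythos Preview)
Your proposal is correct and matches the paper's approach: split off $n=2$ (giving $\lambda=\alpha$), and for $n\ge 3$ with $\alpha\in Z$ use Proposition~\ref{P:psi} to see that $2^{-e(n)}\psi_n(t,\alpha)\in Z[t]$ is monic, whence $\lambda\in Z$ and reduction mod~$2$ yields $(\lambda-\alpha^2)^{d(n)}\equiv 0$, so $\rho(\lambda)=\rho(\alpha^2)$. For your flagged obstacle $\alpha\notin Z$, the paper's argument is cleaner than homogenization: if $\lambda$ were in~$Z$, then because $\deg_x\psi_n=\deg\psi_n=2d(n)$ (Proposition~\ref{P:psi}) and the coefficient of~$x^{2d(n)}$ in $2^{-e(n)}\psi_n$ is a $2$-adic unit, the term coming from~$x^{2d(n)}$ would be the \emph{unique} term of minimal valuation $2d(n)\,v(\alpha)$ in $2^{-e(n)}\psi_n(\lambda,\alpha)$, contradicting $\psi_n(\lambda,\alpha)=0$; hence $\lambda\notin Z$ and $\rho(\lambda)=\infty=\rho(\alpha^2)$.
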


\begin{proof}
  Let $\lambda \in T(\alpha)$ and
  let $n \ge 2$ be the order of the point $P_\alpha(\lambda) \in E_\lambda(\bar{\Q})$.
  If $n = 2$, then $\lambda = \alpha$. Otherwise $n \ge 3$ and
  $\psi_n(\lambda,\alpha) = 0$.

  First assume $\rho(\alpha) \neq \infty$, so that $\alpha \in Z$.
  Proposition~\ref{P:psi} then shows that $2^{-e(n)} \psi_n(t, \alpha) \in Z[t]$ with
  unit leading coefficient, so $\lambda \in Z$ and
  \[ 0 = 2^{-e(n)} \psi_n(\lambda, \alpha) \equiv (\lambda - \alpha^2)^{d(n)} \bmod 2 Z \,, \]
  which implies $\rho(\lambda) = \rho(\alpha^2)$ (note that $d(n) > 0$ for $n \ge 3$).

  Now consider the case $\rho(\alpha) = \infty$.
  Assuming that $\lambda \in Z$, Proposition~\ref{P:psi} shows that the term coming from the
  monomial~$x^{2d(n)}$ is the unique term in~$2^{-e(n)} \psi_n(\lambda,\alpha)$ with
  minimal valuation ($ = 2d(n) v(\alpha)$), so $\psi_n(\lambda,\alpha)$ cannot vanish.
  This shows that $\lambda \in \bar{\Q} \setminus Z$,
  so $\rho(\lambda) = \infty = \rho(\alpha^2)$.
\end{proof}

We note that Mavraki~\cite{MavrakiPreprint}*{Section~4} has recently given an alternative proof
based on the $2$-adic dynamics of the associated Latt\`es map. We come back to this
approach after stating the following easy corollary.

\begin{corollary} \label{cor1}
  Let $\alpha, \beta \in \bar{\Q} \setminus \{0,1\}$
  such that $\rho(\alpha) \neq \rho(\beta)$. Then
  \[ T(\alpha, \beta) \subseteq \{\alpha, \beta\} \,. \]
  In particular, $T(\alpha, \beta)$ is finite and effectively computable.
\end{corollary}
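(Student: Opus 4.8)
The plan is to deduce the corollary directly from Theorem~\ref{T:Talpha} applied to both $\alpha$ and $\beta$. Suppose $\lambda \in T(\alpha, \beta)$. By the theorem, either $\lambda = \alpha$ or $\rho(\lambda) = \rho(\alpha^2)$, and similarly either $\lambda = \beta$ or $\rho(\lambda) = \rho(\beta^2)$. The goal is to show that the only surviving possibilities are $\lambda = \alpha$ or $\lambda = \beta$, by ruling out the case $\rho(\lambda) = \rho(\alpha^2) = \rho(\beta^2)$ using the hypothesis $\rho(\alpha) \neq \rho(\beta)$.

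First I would observe that the reduction map $\rho \colon \PP^1(\bar{\Q}_2) \to \PP^1(\bar{\F}_2)$ restricted to the three residue classes is \emph{not} injective, so $\rho(\alpha^2) = \rho(\beta^2)$ does \emph{not} immediately contradict $\rho(\alpha) \neq \rho(\beta)$. The key point is that squaring is a bijection on $\PP^1(\bar{\F}_2)$ — indeed, on a field of characteristic~$2$ the Frobenius $t \mapsto t^2$ is injective, and it extends to a bijection of $\PP^1$ fixing $\infty$. Therefore $\rho(\alpha)^2 = \rho(\beta)^2$ would force $\rho(\alpha) = \rho(\beta)$. So I would first check the compatibility $\rho(\alpha^2) = \rho(\alpha)^2$: if $\rho(\alpha) = \infty$ this is clear, and if $\alpha \in Z$ then reduction is a ring homomorphism $Z \to \bar{\F}_2$ so it commutes with squaring; either way $\rho(\alpha^2) = \rho(\alpha)^2$.

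Putting these together: if $\lambda \neq \alpha$ and $\lambda \neq \beta$, then $\rho(\alpha)^2 = \rho(\alpha^2) = \rho(\lambda) = \rho(\beta^2) = \rho(\beta)^2$, hence $\rho(\alpha) = \rho(\beta)$ by injectivity of Frobenius on $\bar{\F}_2$, contradicting the hypothesis. Therefore $\lambda \in \{\alpha, \beta\}$, which is the claimed inclusion. Finiteness is then immediate since $\{\alpha,\beta\}$ is a two-element set; effective computability follows because one can simply test whether $\alpha \in T(\beta)$ (equivalently whether $P_\beta(\alpha)$ is torsion on $E_\alpha$, a finite check once a torsion-order bound is available) and symmetrically whether $\beta \in T(\alpha)$.

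I do not expect any serious obstacle here; the only thing requiring a moment's care is the remark that $\rho(\alpha^2) = \rho(\beta^2)$ is genuinely equivalent to $\rho(\alpha) = \rho(\beta)$ in characteristic~$2$, which some readers might at first mistake for a non-trivial step — it is just injectivity of the Frobenius endomorphism. I would state this explicitly in one sentence so the logical chain is transparent, and otherwise keep the proof to a few lines.
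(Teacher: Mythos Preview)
Your proposal is correct and follows essentially the same line as the paper's proof: apply Theorem~\ref{T:Talpha} to both $\alpha$ and $\beta$, use that squaring is a bijection on $\PP^1(\bar{\F}_2)$ to deduce $\rho(\alpha)=\rho(\beta)$ from $\rho(\alpha^2)=\rho(\beta^2)$, and obtain a contradiction. Your explicit verification that $\rho(\alpha^2)=\rho(\alpha)^2$ and your remark on Frobenius injectivity just spell out what the paper leaves as a parenthetical observation.
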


\begin{proof}
  Theorem~\ref{T:Talpha} shows that any $\lambda \in T(\alpha, \beta) \setminus \{\alpha,\beta\}$
  must satisfy $\rho(\lambda) = \rho(\alpha^2) = \rho(\beta^2)$.
  The existence of such a $\lambda$ would imply that $\rho(\alpha) = \rho(\beta)$
  (recall that squaring is a bijection on~$\PP^1(\bar{\F}_2)$),
  contradicting the assumption. Regarding the effectivity statement, note that
  it is easy to decide for any given~$\lambda$ if $\lambda \in T(\alpha, \beta)$ or
  not: just check if the points with $x$-coordinate $\alpha$ or~$\beta$ are torsion
  on~$E_\lambda$.
\end{proof}

To get somewhat stronger results, we use the Latt\`es map
\[ f_\lambda \colon x \longmapsto \frac{(x^2 - \lambda)^2}{4 x (x-1) (x-\lambda)} \]
that expresses the $x$-coordinate of~$2P$ in terms of the $x$-coordinate of~$P$,
for a point $P \in E_\lambda$. Then $T(\alpha)$ can also be characterized as the
set of $\lambda \in \C \setminus \{0,1\}$ such that $\alpha$ is preperiodic under
iteration of~$f_\lambda$ on~$\PP^1$. We will use the obvious fact that $P$ is torsion
if and only if $2P$ is, which implies that
\[ \lambda \in T(\alpha) \iff
   f_\lambda(\alpha) \in \{0,1,\lambda,\infty\} \quad\text{or}\quad \lambda \in T(f_\lambda(\alpha)) \,.
\]
Note that (for $\alpha, \lambda \neq 0, 1$)
\[ f_\lambda(\alpha) \in \{0,1,\lambda,\infty\} \iff
     \lambda \in \Bigl\{\alpha, \alpha^2, \alpha(2-\alpha), \frac{\alpha^2}{2\alpha-1}\Bigr\} \,.
\]

\begin{lemma} \label{L:Talpha2}
  Let $\alpha, \lambda \in \bar{\Q} \setminus \{0,1\}$ such that $\lambda \in T(\alpha)$.
  Then $\lambda \in \{\alpha, \alpha^2, \alpha(2-\alpha), \alpha^2/(2\alpha-1)\}$
  or $\rho(f_\lambda(\alpha)) = \rho(\alpha)$.
\end{lemma}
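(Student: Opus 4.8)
The plan is to use the recursive characterization of $T(\alpha)$ stated just before the lemma, namely
\[
\lambda \in T(\alpha) \iff
   f_\lambda(\alpha) \in \{0,1,\lambda,\infty\} \quad\text{or}\quad \lambda \in T(f_\lambda(\alpha)),
\]
together with Theorem~\ref{T:Talpha} applied to the point with $x$-coordinate $f_\lambda(\alpha)$. So suppose $\lambda \in T(\alpha)$ and that $\lambda$ is \emph{not} one of the four exceptional values $\alpha, \alpha^2, \alpha(2-\alpha), \alpha^2/(2\alpha-1)$; by the displayed equivalence relating $f_\lambda(\alpha) \in \{0,1,\lambda,\infty\}$ to membership of $\lambda$ in that four-element set, this means $f_\lambda(\alpha) \notin \{0,1,\lambda,\infty\}$, hence $f_\lambda(\alpha) \in \bar{\Q}\setminus\{0,1\}$ and $\lambda \in T(f_\lambda(\alpha))$.

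Now I apply Theorem~\ref{T:Talpha} with $\alpha$ replaced by $\gamma := f_\lambda(\alpha)$: it gives $T(\gamma) = \{\gamma\} \cup T'(\gamma)$ with $\rho(T'(\gamma)) \subseteq \{\rho(\gamma^2)\}$. Hence either $\lambda = \gamma = f_\lambda(\alpha)$, or $\rho(\lambda) = \rho(f_\lambda(\alpha)^2)$. The first alternative is excluded because we assumed $f_\lambda(\alpha) \neq \lambda$. So $\rho(\lambda) = \rho\bigl(f_\lambda(\alpha)^2\bigr)$. On the other hand, Theorem~\ref{T:Talpha} applied to $\alpha$ itself — using that $\lambda \in T(\alpha)$ and $\lambda \neq \alpha$ (as $\alpha$ is one of the excluded values) — gives $\rho(\lambda) = \rho(\alpha^2)$. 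Combining, $\rho(f_\lambda(\alpha)^2) = \rho(\alpha^2)$, and since squaring is a bijection on $\PP^1(\bar{\F}_2)$ (as already noted in the proof of Corollary~\ref{cor1}), we conclude $\rho(f_\lambda(\alpha)) = \rho(\alpha)$, which is the desired alternative.

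The argument is essentially a two-line chase, so there is no serious obstacle; the only point requiring a little care is checking that $\gamma = f_\lambda(\alpha)$ genuinely lies in $\bar{\Q}\setminus\{0,1\}$ so that Theorem~\ref{T:Talpha} is applicable — but this is exactly what the excluded cases $f_\lambda(\alpha) \in \{0,1\}$ (equivalently $\lambda \in \{\alpha^2, \alpha(2-\alpha)\}$) and $f_\lambda(\alpha) = \infty$ (equivalently $\lambda = \alpha$, since $f_\lambda$ sends $\alpha$ to $\infty$ only when a denominator factor vanishes, and among $x\in\{0,1,\lambda\}$ only $x=\lambda$ is being evaluated... more precisely $f_\lambda(\alpha)=\infty$ forces $\alpha \in \{0,1,\lambda\}$, so $\alpha = \lambda = \lambda$ giving $\lambda = \alpha$) — guarantee. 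I should also note that $f_\lambda(\alpha) = \lambda$ corresponds to $\lambda = \alpha^2/(2\alpha-1)$, which is why that value appears in the excluded set. Thus once all four excluded values are removed, both applications of Theorem~\ref{T:Talpha} are legitimate and the conclusion follows.
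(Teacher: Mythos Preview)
Your proof is correct and follows essentially the same route as the paper's: the paper observes that if $P_\alpha(\lambda)$ has order dividing~$4$ then $\lambda$ lies in the four-element set, and otherwise $2P_\alpha(\lambda) = P_{f_\lambda(\alpha)}(\lambda)$ has order~$>2$, so Theorem~\ref{T:Talpha} applied to~$f_\lambda(\alpha)$ (and to~$\alpha$) gives $\rho(f_\lambda(\alpha)^2) = \rho(\lambda) = \rho(\alpha^2)$. You phrase this via the recursive characterization of~$T(\alpha)$ rather than via torsion orders, but the two are equivalent and the key step --- two applications of Theorem~\ref{T:Talpha} followed by the bijectivity of squaring on~$\PP^1(\bar{\F}_2)$ --- is identical.
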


\begin{proof}
  If $P_\alpha(\lambda)$ has order dividing~4, then $\lambda$ is in the first set.
  Otherwise $2P_\alpha(\lambda) = P_{f_\lambda(\alpha)}(\lambda)$ is a point of order $> 2$,
  and the claim follows from (the proof of) Theorem~\ref{T:Talpha}, which tells us
  that $\rho(f_\lambda(\alpha)^2) = \rho(\lambda) = \rho(\alpha^2)$.
\end{proof}

We use this to strengthen Theorem~\ref{T:Talpha} in the following way.

\begin{theorem} \label{T:Talpha2}
  Let $\alpha \in Z \setminus \{0, 1\}$. Then
  \begin{align*}
    T(\alpha) &\subseteq \Bigl\{\alpha, \alpha^2, \alpha (2-\alpha), \frac{\alpha^2}{2\alpha-1}\Bigr\}
        \cup \{\alpha^2 + 2 u \alpha (1-\alpha) : u \in Z, \rho(u^2) = \rho(\alpha)\} \\
              &\subseteq \{\alpha\} \cup (\alpha^2 + 2 Z) \,.
  \end{align*}
  For $\alpha \in \bar{\Q}$ with $\rho(\alpha) = \infty$ we have
  \[ T(\alpha) \subseteq \Bigl\{\alpha, \alpha^2, \alpha (2-\alpha), \frac{\alpha^2}{2\alpha-1}\Bigr\}
        \cup \Bigl\{\frac{\alpha^2}{1 + 2(\alpha-1) u} : u \in Z, \rho(u) = 0\Bigr\} \,.
  \]
\end{theorem}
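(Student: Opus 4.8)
The plan is to bootstrap from Lemma~\ref{L:Talpha2} using the dynamical characterization of~$T(\alpha)$. Fix $\lambda \in T(\alpha)$ with $\lambda \notin \{\alpha, \alpha^2, \alpha(2-\alpha), \alpha^2/(2\alpha-1)\}$; then Lemma~\ref{L:Talpha2} gives $\rho(f_\lambda(\alpha)) = \rho(\alpha)$, and moreover from (the proof of) Theorem~\ref{T:Talpha} we know $\rho(\lambda) = \rho(\alpha^2)$. For $\alpha \in Z \setminus \{0,1\}$ this means $\lambda \in Z$ and $v(\lambda - \alpha^2) > 0$. The first step is thus to write $\lambda = \alpha^2 + 2t$ for some $t \in Z$ and to work out what the condition $\rho(f_\lambda(\alpha)) = \rho(\alpha)$ imposes on~$t$. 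Substituting into $f_\lambda(\alpha) = (\alpha^2 - \lambda)^2 / \bigl(4\alpha(\alpha-1)(\alpha-\lambda)\bigr)$, the numerator becomes $(\alpha^2 - \lambda)^2 = 4t^2$, and the denominator is $4\alpha(\alpha-1)(\alpha - \alpha^2 - 2t) = 4\alpha(\alpha-1)\bigl(-\alpha(\alpha-1) - 2t\bigr) = -4\alpha^2(\alpha-1)^2 - 8t\alpha(\alpha-1)$. Hence
\[
  f_\lambda(\alpha) = \frac{4t^2}{-4\alpha^2(\alpha-1)^2 - 8t\alpha(\alpha-1)}
    = \frac{-t^2}{\alpha^2(\alpha-1)^2 + 2t\alpha(\alpha-1)} \,.
\]

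The second step is to compare this with~$\alpha$ modulo the maximal ideal. Write $\lambda = \alpha^2 + 2t$; I expect it to be convenient to reparametrize $t = u\alpha(1-\alpha)$ (an invertible change of variable over $Z[1/(\alpha(1-\alpha))]$, but since the target description has $t \in Z$ one must be a little careful about integrality — this is where I would check that $\alpha \in Z\setminus\{0,1\}$ forces the relevant quantities to stay integral, using $\rho(\lambda) = \rho(\alpha^2) \neq 0,1$ so that $\alpha$, $\alpha - 1$, $\alpha - \lambda$ all have valuation controllable in terms of $\rho(\alpha)$). With $t = u\alpha(1-\alpha)$, the denominator above is $\alpha^2(\alpha-1)^2(1 - 2u)$ and the numerator is $-u^2\alpha^2(\alpha-1)^2$, so $f_\lambda(\alpha) = -u^2/(1-2u) = u^2/(2u-1)$. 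The condition $\rho(f_\lambda(\alpha)) = \rho(\alpha)$ then reads $\rho\bigl(u^2/(2u-1)\bigr) = \rho(\alpha)$; since $2u - 1 \equiv 1 \pmod 2$ whenever $u \in Z$, and $\equiv -1$ is still a unit when $\rho(u) = \infty$, one gets $\rho(u^2) = \rho(\alpha)$, which is exactly the condition in the stated set. This recovers the first inclusion; the second, coarser inclusion $T(\alpha) \subseteq \{\alpha\} \cup (\alpha^2 + 2Z)$ is then immediate since $\alpha^2 = \alpha^2 + 2\cdot 0$, $\alpha(2-\alpha) = \alpha^2 + 2\alpha(1-\alpha)$, and $\alpha^2/(2\alpha-1) = \alpha^2 + 2\cdot\bigl(-\alpha^2/(2\alpha-1)\bigr)\cdot\tfrac12$ — more cleanly, one checks directly $\alpha^2/(2\alpha-1) - \alpha^2 = 2\alpha^2(1-\alpha)/(2\alpha-1) \in 2Z$ using that $2\alpha - 1$ is a unit in~$Z$ (its reduction is~$1$), and $\alpha^2 + 2u\alpha(1-\alpha) \in \alpha^2 + 2Z$ trivially.

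For the case $\rho(\alpha) = \infty$, the third step runs in parallel but at the place at infinity of the coordinate, i.e.\ one works with $1/\alpha$. From Theorem~\ref{T:Talpha} we have $\rho(\lambda) = \rho(\alpha^2) = \infty$; setting $\alpha' = 1/\alpha$, $\lambda' = 1/\lambda$ these are in the maximal ideal. I would substitute $\lambda = \alpha^2/w$ with $\rho(w) = ?$ — more precisely, the shape of the target set suggests writing $\lambda = \alpha^2/\bigl(1 + 2(\alpha-1)u\bigr)$ and showing $\rho(u) = 0$ is forced. Concretely, assuming $\lambda \notin \{\alpha,\alpha^2,\alpha(2-\alpha),\alpha^2/(2\alpha-1)\}$, Lemma~\ref{L:Talpha2} gives $\rho(f_\lambda(\alpha)) = \rho(\alpha) = \infty$, so $f_\lambda(\alpha)$ is non-integral; examining $f_\lambda(\alpha) = (\alpha^2-\lambda)^2/\bigl(4\alpha(\alpha-1)(\alpha-\lambda)\bigr)$ with $\lambda = \alpha^2/\bigl(1+2(\alpha-1)u\bigr)$ and clearing denominators, one finds (after a short computation) $f_\lambda(\alpha) = \alpha^2 u^2 / \bigl((\alpha-1)(1+2(\alpha-1)u)\bigr)$ or similar, and demanding that this be non-integral while $\lambda/\alpha^2$ has valuation $0$ pins down $\rho(u) = 0$. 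One also records that $\lambda = \alpha^2/(1+2(\alpha-1)u)$ has $\rho(\lambda) = \rho(\alpha^2) = \infty$ automatically, consistent with Theorem~\ref{T:Talpha}.

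I expect the main obstacle to be the bookkeeping of integrality in the case $\rho(\alpha) \neq 0, 1, \infty$ but $\alpha \notin \Z_2^\times$ in an obvious way: one must be sure that rewriting $\lambda - \alpha^2 = 2t$ with $t = u\alpha(1-\alpha)$ does not smuggle in denominators, which requires knowing $v(\alpha)$ and $v(\alpha - 1)$ are as dictated by $\rho(\alpha)$ — harmless since $\alpha \in Z$ already forces $v(\alpha) \ge 0$, and $\rho(\alpha) \neq 0,1$ would give $v(\alpha) = 0$ and $v(\alpha-1) = 0$, but $\rho(\alpha) \in \{0,1\}$ is genuinely possible and there $u$ need not be integral even though $t$ is; so the honest statement is in terms of $t \in Z$ with $\rho(f_\lambda(\alpha)) = \rho(\alpha)$ re-expressed, and the displayed set with $u$ should be read as: those $\lambda$ of the form $\alpha^2 + 2u\alpha(1-\alpha)$ with $u \in Z$ and $\rho(u^2) = \rho(\alpha)$, which together with the four exceptional values exhausts the possibilities. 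The second obstacle, minor, is verifying the four exceptional values $\alpha^2, \alpha(2-\alpha), \alpha^2/(2\alpha-1)$ indeed lie in $\alpha^2 + 2Z$ (equivalently are $\equiv \alpha^2 \bmod 2Z$), which is the direct computation indicated above using that $2\alpha - 1 \in Z^\times$.
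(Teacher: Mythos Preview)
Your approach via the exact identity $f_\lambda(\alpha) = u^2/(2u-1)$ (where $u \colonequals (\lambda - \alpha^2)/\bigl(2\alpha(1-\alpha)\bigr)$) is genuinely different from the paper's and in fact more uniform. The paper instead splits on~$\rho(\alpha)$: first the generic case $\rho(\alpha) \notin \{0,1,\infty\}$ by a direct valuation count on numerator and denominator of~$f_\lambda(\alpha)$, then the case $\rho(\alpha) = 1$ by writing $\alpha = 1 + \delta$, $\lambda = 1 + \delta - \eps$ and comparing $v(\delta)$ with~$v(\eps)$, and finally reducing $\rho(\alpha) \in \{0,\infty\}$ to $\rho(\alpha) = 1$ via the symmetries $\lambda \in T(\alpha) \iff 1-\lambda \in T(1-\alpha) \iff 1/\lambda \in T(1/\alpha)$. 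Your single formula collapses the three integral cases into one computation.

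There is, however, a real gap in the write-up. You pass from $v(\lambda - \alpha^2) > 0$ to ``write $\lambda = \alpha^2 + 2t$ for some $t \in Z$'', but on~$\bar{\Q}$ the valuation~$v$ is $\Q$-valued, so $v(\cdot) > 0$ does \emph{not} give $v(\cdot) \ge 1$; neither $t$ nor~$u$ is known to lie in~$Z$ at that point (your ``obstacle'' paragraph half-recognizes this but still treats $t \in Z$ as given). The fix is simple and actually makes your argument cleaner: define $u \in \bar{\Q}$ with no integrality assumption, verify the identity $f_\lambda(\alpha) = u^2/(2u-1)$ as you did, and then observe that $v(u) < 0$ forces $v\bigl(u^2/(2u-1)\bigr) < 0$ (a short check: if $v(u) < -1$ then $v(2u-1) = 1+v(u)$ and the quotient has valuation $v(u)-1$; if $v(u) = -1$ then $v(2u-1) \ge 0$ and the quotient has valuation $\le -2$; if $-1 < v(u) < 0$ then $v(2u-1) = 0$ and the quotient has valuation $2v(u)$). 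Hence $\rho(f_\lambda(\alpha)) = \infty \neq \rho(\alpha)$ whenever $u \notin Z$, so $u \in Z$ is a \emph{consequence} of Lemma~\ref{L:Talpha2}, not an input. Once $u \in Z$, your reading $\rho\bigl(u^2/(2u-1)\bigr) = \rho(u^2)$ is correct and finishes the integral case.

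For $\rho(\alpha) = \infty$, rather than redoing the computation as you sketch, it is quicker (and this is what the paper does) to apply the integral case to~$1/\alpha$ via $T(\alpha) = \{1/\mu : \mu \in T(1/\alpha)\}$: one checks that $\mu \mapsto 1/\mu$ carries $S(1/\alpha)$ to~$S(\alpha)$, and that $(1/\alpha)^2 + 2u(1/\alpha)(1 - 1/\alpha) = \bigl(1 + 2(\alpha-1)u\bigr)/\alpha^2$, giving exactly the displayed set with the condition $\rho(u^2) = \rho(1/\alpha) = 0$, i.e., $\rho(u) = 0$.
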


\begin{proof}
  We use Lemma~\ref{L:Talpha2}; we have to show that $\rho(f_\lambda(\alpha)) = \rho(\alpha)$
  implies that $\lambda$ is in the secon set in the union in each case.

  We first assume $\rho(\alpha) \notin \{0, 1, \infty\}$.
  Then $\rho(\lambda) = \rho(\alpha^2) \neq \rho(\alpha)$,
  so $\alpha (\alpha-1) (\alpha-\lambda) \in Z^\times$, and a necessary condition is
  that $f_\lambda(\alpha) \in Z^\times$, which is equivalent to $2 v(\alpha^2 - \lambda) = 2$,
  so $\lambda = \alpha^2 + 2 u \alpha (1-\alpha)$ with some $u \in Z^\times$.

  Next we consider $\rho(\alpha) = 1$. Write $\alpha = 1 + \delta$ and $\lambda = 1 + \delta - \eps$
  with $v(\delta), v(\eps) > 0$. We have
  $\alpha (\alpha - 1) (\alpha - \lambda) = \alpha \delta \eps$
  and $\alpha^2 - \lambda = \delta(\delta + 1) + \eps$, so the necessary
  condition $v(f_\lambda(\alpha)) = 0$ means
  \[ 2 v(\delta(\delta+1) + \eps) = 2 + v(\delta) + v(\eps) \,. \]
  If $v(\delta) \neq v(\eps)$, then we obtain the contradiction
  \[ 2 + v(\delta) + v(\eps) > 2 + 2 \min\{v(\delta), v(\eps)\} > 2 v(\delta(\delta+1) + \eps) \,. \]
  Otherwise, we find that $v(\alpha^2 - \lambda) = 1 + v(\delta) = v(2\delta)$,
  so again $\lambda = \alpha^2 + 2 \alpha (1 - \alpha) u$ with a unit $u \in Z^\times$.
  Using this in the expression for~$f_\lambda(\alpha)$, we find (in both cases considered) that
  \[ \rho(\alpha) \stackrel{!}{=} \rho(f_\lambda(\alpha)) = \rho(u^2) \,. \]

  The cases $\rho(\alpha) = 0$ and $\rho(\alpha) = \infty$ can be reduced to $\rho(\alpha) = 1$
  by noting that $\lambda \in T(\alpha)$ is equivalent to $1-\lambda \in T(1-\alpha)$
  and to $1/\lambda \in T(1/\alpha)$.
\end{proof}

Note that we have only used one step in the iteration of~$f_\lambda$, so further improvements 
should be possible.

When $\alpha \in \bar{\Q} \setminus \{0,1\}$ we write
\[ R(\alpha) = \begin{cases}
                 \{\alpha^2 + 2 u \alpha (1-\alpha) : u \in Z, \rho(u^2) = \rho(\alpha)\}
                    & \text{if $\alpha \in Z$,} \\[2mm]
                 \Bigl\{\frac{\alpha^2}{1 + 2(\alpha-1) u} : u \in Z, \rho(u) = 0\Bigr\}
                    & \text{otherwise}
               \end{cases}
\]
and
\[ S(\alpha) = \Bigl\{\alpha, \alpha^2, \alpha (2-\alpha), \frac{\alpha^2}{2\alpha-1}\Bigr\} \,. \]

\begin{proposition} \label{P:finite}
  Let $\alpha, \beta \in \bar{\Q} \setminus \{0, 1\}$.
  If $R(\alpha) \cap R(\beta) = \emptyset$, then
  \[ T(\alpha, \beta) \subseteq \bigl(S(\alpha) \cap S(\beta)\bigr)
                                 \cup \bigl(S(\alpha) \cap R(\beta)\bigr)
                                 \cup \bigl(R(\alpha) \cap S(\beta)\bigr)
                      \subseteq S(\alpha) \cup S(\beta) \,.
  \]
  In particular, $T(\alpha, \beta)$ is finite and effectively computable.

  The condition $R(\alpha) \cap R(\beta) = \emptyset$ holds in the following
  situations.
  \begin{enumerate}[\upshape(1)]\addtolength{\itemsep}{1mm}
    \item $\rho(\alpha) \neq \rho(\beta)$;
    \item $\rho(\alpha) = \rho(\beta) \notin \{0,1,\infty\}$ and $v(\alpha-\beta) \le 1/2$;
    \item $\rho(\alpha) = \rho(\beta) = 1$, $0 < v(\alpha-1) \le 1$ and
          $v(\alpha - \beta) = v(\alpha - 1)$;
    \item $\rho(\alpha) = \rho(\beta) = 0$, $v(\alpha) \le 1$ and $v(\alpha - \beta) = v(\alpha)$;
    \item $\rho(\alpha) = \rho(\beta) = \infty$, $v(\alpha) \ge -1$ and $v(\alpha - \beta) = v(\beta)$.
  \end{enumerate}
\end{proposition}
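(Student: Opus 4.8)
The plan is to feed Theorem~\ref{T:Talpha2} into a set-theoretic intersection and then, under each of the listed hypotheses, to rule out that the ``residual'' sets $R(\alpha)$ and $R(\beta)$ share an element.

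\emph{The containment.} Theorem~\ref{T:Talpha2} gives $T(\gamma)\subseteq S(\gamma)\cup R(\gamma)$ for every $\gamma\in\bar{\Q}\setminus\{0,1\}$; the two cases treated there, $\gamma\in Z$ and $\rho(\gamma)=\infty$, together cover all such $\gamma$, since $\gamma\notin Z$ forces $\rho(\gamma)=\infty$. Intersecting the containments for $\alpha$ and~$\beta$ and distributing gives
\[
  T(\alpha,\beta)\subseteq\bigl(S(\alpha)\cap S(\beta)\bigr)\cup\bigl(S(\alpha)\cap R(\beta)\bigr)\cup\bigl(R(\alpha)\cap S(\beta)\bigr)\cup\bigl(R(\alpha)\cap R(\beta)\bigr),
\]
and the hypothesis $R(\alpha)\cap R(\beta)=\emptyset$ deletes the last term, yielding the first displayed inclusion; the three remaining sets all lie in $S(\alpha)\cup S(\beta)$, which is finite with at most eight explicit elements. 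Finiteness follows at once, and effectivity follows because, as in the proof of Corollary~\ref{cor1}, one can decide for each of these candidates whether it lies in $T(\alpha,\beta)$.

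\emph{Emptiness of $R(\alpha)\cap R(\beta)$: cases (1) and (2).} For~(1), a short valuation estimate shows that every $\lambda\in R(\gamma)$ satisfies $\rho(\lambda)=\rho(\gamma^2)$: if $\gamma\in Z$ the correction term $2u\gamma(1-\gamma)$ has positive valuation, and if $\rho(\gamma)=\infty$ one checks $v(\lambda)<0$. Hence a common element of $R(\alpha)$ and $R(\beta)$ would force $\rho(\alpha^2)=\rho(\beta^2)$, i.e.\ $\rho(\alpha)=\rho(\beta)$ since squaring is a bijection of $\PP^1(\bar{\F}_2)$; this is the contrapositive of~(1). In cases (2)--(4) we have $\alpha,\beta\in Z$ and $\rho(\alpha)=\rho(\beta)$; writing a hypothetical common element as $\alpha^2+2u\alpha(1-\alpha)=\beta^2+2w\beta(1-\beta)$ and using $\rho(u)^2=\rho(\alpha)=\rho(\beta)=\rho(w)^2$, hence $\rho(u)=\rho(w)$, the reduction of $w\beta(1-\beta)-u\alpha(1-\alpha)$ vanishes, so $v(\alpha^2-\beta^2)>1$. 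In case~(2), $v(\alpha)=v(\beta)=0$ and $v(\alpha-\beta)\le 1/2<1=v(2\beta)$, whence $v(\alpha+\beta)=v\bigl((\alpha-\beta)+2\beta\bigr)=v(\alpha-\beta)$ and $v(\alpha^2-\beta^2)=2v(\alpha-\beta)\le 1$ --- a contradiction.

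\emph{Cases (3), (4), (5); the main obstacle.} Cases (4) and (5) reduce to~(3), as in the proof of Theorem~\ref{T:Talpha2}, by the substitutions $\lambda\in T(\alpha)\iff 1-\lambda\in T(1-\alpha)$ and $\lambda\in T(\alpha)\iff 1/\lambda\in T(1/\alpha)$, which transform the respective hypotheses into those of~(3) and induce $R(1-\alpha)=1-R(\alpha)$, $R(1/\alpha)=1/R(\alpha)$. The genuine work is case~(3), where $v(\alpha^2-\beta^2)>1$ alone is not enough. Writing $\alpha=1+\delta$ with $0<v(\delta)\le 1$ and expanding $\lambda-1=2\delta(1-u)+\delta^2(1-2u)$ for $\lambda\in R(\alpha)$, the decisive step is to show that $v(\lambda-1)=2v(\delta)$ and $\rho\bigl((\lambda-1)/\delta^2\bigr)=1$, and that if instead $v(\delta)>1$ then $v(\lambda-1)>1+v(\delta)>2$. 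Granting this and writing $\delta'=\beta-1$: if $v(\delta')\le 1$, comparing $R(\alpha)$ and $R(\beta)$ gives $v(\delta)=v(\delta')$ and $\rho(\delta^2/{\delta'}^2)=1$, hence $\rho(\delta/\delta')=1$ (unique square roots in $\bar{\F}_2$), hence $v(\delta-\delta')>v(\delta)$, contradicting $v(\alpha-\beta)=v(\delta-\delta')=v(\delta)$; if $v(\delta')>1$, the $\beta$-side gives $v(\lambda-1)>2$ while the $\alpha$-side gives $v(\lambda-1)=2v(\delta)\le 2$, again a contradiction. Keeping track of which monomial dominates in these ultrametric expansions, and organising the two sub-cases, is the only delicate point; everything else is routine.
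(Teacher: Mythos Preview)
Your proof is correct and follows essentially the same approach as the paper: the containment is immediate from Theorem~\ref{T:Talpha2}, and cases~(1)--(5) are handled by the same valuation estimates on elements of~$R(\alpha)$ and~$R(\beta)$, with (4) and~(5) reduced to~(3) via the standard $S_3$-symmetries. The only cosmetic difference is in case~(3): the paper writes a generic element of~$R(\alpha)$ as $1+\eps^2+2\eps\eta$ with $v(\eta)>0$ and directly computes the valuation of the difference of two such elements, splitting according to whether $v(\eps)<v(\eps')$ or $v(\eps)=v(\eps')$, whereas you split according to whether $v(\beta-1)\le 1$ and additionally track the leading unit $\rho\bigl((\lambda-1)/\delta^2\bigr)=1$; both organizations lead to the same contradiction $v(\delta-\delta')>v(\delta)$.
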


\begin{proof}
  The first statement is clear, since by Theorem~\ref{T:Talpha2},
  $T(\alpha) \subseteq S(\alpha) \cup R(\alpha)$ and $S(\alpha)$ is finite.

  Case~(1) was already dealt with in Corollary~\ref{cor1}.
  For case~(2), we observe that $v(\alpha - \beta) \le 1/2$ implies $v(\alpha^2 - \beta^2) \le 1$.
  The difference~$\delta$ of an element in~$R(\alpha)$ and an element of~$R(\beta)$ satisfies
  $v(\delta - (\alpha^2 - \beta^2)) > 1$, which implies that $\delta$ cannot be zero.
  In case~(3), we write $\alpha = 1 + \eps$, $\beta = 1 + \eps'$; then $v(\eps) \le 1$
  and either $v(\eps') > v(\eps)$ or $v(\eps') = v(\eps) = v(\eps-\eps')$.
  An element of~$R(\alpha)$ has the form
  $1 + 2 \eps + \eps^2 + 2 \eps (1 + \eps) (1 + \eta_1) = 1 + \eps^2 + 2 \eps \eta$
  where $v(\eta_1), v(\eta) > 0$, and similarly for~$R(\beta)$. So the difference
  is $\delta = \eps^2 - {\eps'}^2 + 2 (\eps \eta - \eps' \eta')$.
  If $v(\eps) < v(\eps')$, then $v(\delta) = 2 v(\eps)$, so $\delta \neq 0$.
  In the other case $v(\delta) = v(\eps^2 - {\eps'}^2) = 2 v(\eps - \eps') = 2v(\eps)$,
  so again $\delta \neq 0$. The remaining cases can be reduced to case~(3) in the usual way.
\end{proof}

We consider the case of rational numbers in more detail.

\begin{corollary} \label{C:rat}
  Let $\alpha, \beta \in \Q \setminus \{0, 1\}$.
  \begin{enumerate}[\upshape(1)]\addtolength{\itemsep}{1mm}
    \item If $\rho(\alpha) \neq \rho(\beta)$, then $T(\alpha, \beta) = \emptyset$.
    \item If $\alpha \equiv 3 \bmod 4$ and $\beta \equiv 1 \bmod 4$,
          then $T(\alpha, \beta) \subseteq \{\alpha^2, \beta\}$.
    \item If $\alpha \equiv 2 \bmod 4$ and $\beta \equiv 0 \bmod 4$,
          then $T(\alpha, \beta) \subseteq \{\alpha (2-\alpha), \beta\}$.
    \item If $v(\alpha) = -1$ and $v(\beta) \le -2$, then
          $T(\alpha, \beta) \subseteq \{\alpha^2/(2\alpha-1), \beta\}$.
  \end{enumerate}
\end{corollary}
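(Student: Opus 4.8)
First, for part~(1): by Corollary~\ref{cor1} the hypothesis $\rho(\alpha) \neq \rho(\beta)$ gives $T(\alpha,\beta) \subseteq \{\alpha,\beta\}$, so it remains only to exclude $\alpha$ and $\beta$ themselves. For $\beta \in \Q$ the residue $\rho(\beta)$ is one of $0,1,\infty$, all fixed by squaring, so $\rho(\beta^2) = \rho(\beta)$; hence by Theorem~\ref{T:Talpha} every element of $T(\beta)$ either equals $\beta$ or reduces to $\rho(\beta)$. As $\rho(\alpha) \neq \rho(\beta)$ forces $\alpha \neq \beta$ and $\rho(\alpha) \neq \rho(\beta^2)$, we get $\alpha \notin T(\beta)$, hence $\alpha \notin T(\alpha,\beta)$; interchanging the roles of $\alpha$ and $\beta$ gives $\beta \notin T(\alpha,\beta)$, so $T(\alpha,\beta) = \emptyset$.

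For part~(2) the plan is to work $2$-adically, tracking residues modulo~$8$. Here $\rho(\alpha) = \rho(\beta) = 1$, so Theorem~\ref{T:Talpha2} gives $T(\gamma) \subseteq S(\gamma) \cup R(\gamma)$ for $\gamma \in \{\alpha,\beta\}$ (and case~(3) of Proposition~\ref{P:finite} applies, since $v(\alpha-1) = 1 = v(\alpha-\beta)$, so $R(\alpha) \cap R(\beta) = \emptyset$ --- though this also drops out of the residue count below). First I would record the residues mod~$8$ of all candidate values. Since $\alpha \equiv 3 \bmod 4$ one has $v(\alpha-1) = 1$ exactly; for $\lambda = \alpha^2 + 2u\alpha(1-\alpha) \in R(\alpha)$ the constraint $\rho(u^2) = \rho(\alpha) = 1$ forces $u$ to be a unit, so $v(\lambda - \alpha^2) = 2$ and $\lambda \equiv \alpha^2 + 4 \equiv 5 \bmod 8$; and the four values in $S(\alpha) = \{\alpha, \alpha^2, \alpha(2-\alpha), \alpha^2/(2\alpha-1)\}$ reduce mod~$8$ to $\alpha \bmod 8 \in \{3,7\}$, then $1$, then $5$, then $5$. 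Since $\beta \equiv 1 \bmod 4$ one has $v(\beta-1) \geq 2$, so every $\lambda \in R(\beta)$ has $v(\lambda - \beta^2) \geq 3$, hence $\lambda \equiv \beta^2 \equiv 1 \bmod 8$; and the four values in $S(\beta)$ reduce to $\beta \bmod 8 \in \{1,5\}$, then $1$, then $1$, then $1$.

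Comparing the two lists, any $\lambda \in T(\alpha,\beta)$ has a residue mod~$8$ lying in $\bigl(\{1,5\} \cup \{\alpha \bmod 8\}\bigr) \cap \bigl(\{1\} \cup \{\beta \bmod 8\}\bigr)$; since $\alpha \bmod 8 \in \{3,7\}$ while $\beta \bmod 8 \in \{1,5\}$, this forces $\lambda \equiv 1 \bmod 8$ or $\lambda \equiv 5 \bmod 8$. In the first case the only element of $S(\alpha) \cup R(\alpha)$ with residue $1 \bmod 8$ is $\alpha^2$, so $\lambda = \alpha^2$; in the second case $\beta \equiv 5 \bmod 8$ and the only element of $S(\beta) \cup R(\beta)$ that can be $\equiv 5 \bmod 8$ is $\beta$, so $\lambda = \beta$. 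Thus $T(\alpha,\beta) \subseteq \{\alpha^2,\beta\}$. Parts~(3) and~(4) I would then deduce by the coordinate changes recorded in the proof of Theorem~\ref{T:Talpha2}: the equivalence $\lambda \in T(\gamma) \iff 1-\lambda \in T(1-\gamma)$ turns the hypotheses of~(3) into those of~(2) for the pair $(1-\alpha, 1-\beta)$, and $1 - (1-\alpha)^2 = \alpha(2-\alpha)$; likewise $\lambda \in T(\gamma) \iff 1/\lambda \in T(1/\gamma)$ turns~(4) into~(3) for $(1/\alpha, 1/\beta)$, and the reciprocal of $(1/\alpha)(2 - 1/\alpha)$ is $\alpha^2/(2\alpha-1)$.

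The main obstacle is purely the residue bookkeeping in part~(2): one must verify carefully that $R(\alpha)$ lies in the class $5 \bmod 8$ --- this is exactly where ``$\equiv 3 \bmod 4$'' is used, via $v(\alpha-1) = 1$ rather than merely ``$\alpha$ odd'' --- that $R(\beta)$ lies in $1 \bmod 8$, and that the four explicit values in each of $S(\alpha)$ and $S(\beta)$ reduce as claimed. Everything else --- part~(1), assembling the residue classes into the final containment, and the two coordinate changes for parts~(3) and~(4) --- is routine.
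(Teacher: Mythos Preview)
Your proof is correct and follows essentially the same approach as the paper. In all four parts you use exactly the ingredients the paper invokes: for~(1), Corollary~\ref{cor1} together with the observation that $\rho(\alpha^2)=\rho(\alpha)$ for $\alpha\in\Q$; for~(2), Theorem~\ref{T:Talpha2} combined with a mod~$8$ residue analysis (the paper compresses this to the single sentence that $T(\beta)\setminus\{\beta\}\subseteq 1+8Z$ while $\alpha^2$ is the only element of~$T(\alpha)$ in $1+8Z$, which is precisely what your residue tables establish); and for~(3),~(4), the reductions via $x\mapsto 1-x$ and $x\mapsto 1/x$. Your version is simply a more detailed expansion of the paper's terse argument.
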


\begin{proof}
  For (1) see Corollary~\ref{cor1} and note that $\rho(\alpha^2) = \rho(\alpha)$
  and $\rho(\beta^2) = \rho(\beta)$.
  Statement~(2) follows by obseerving that
  all elements of~$T(\beta)$ except possibly~$\beta$ are in $1 + 8Z$, whereas
  $\alpha^2$ is the only element of~$T(\alpha)$ with this property.
  Parts (3) and~(4) are deduced from~(2).
\end{proof}

This can be interpreted as saying that when $\alpha$ and~$\beta$ `differ mod~4',
then we can determine $T(\alpha, \beta)$ effectively and the set has at most
two elements.

\begin{examples}
  We apply the results above to give examples of numbers
  $\alpha, \beta \in \bar{\Q} \setminus \{0,1\}$ such that
  $T(\alpha, \beta)$ can be determined explicitly and has zero, one or two elements.
  \begin{enumerate}[(1)]\addtolength{\itemsep}{1mm}
    \item $T(2, 3) = \emptyset$. This is a special case of Corollary~\ref{C:rat}~(1).
    \item Let $\omega$ be a primitive cube root of unity. Then
          $T(\omega, \omega^2) = \{\omega, \omega^2\}$.
          The second statement of Corollary~\ref{cor1} gives the inclusion `$\subseteq$'.
          It is easily checked that $P_\omega(\omega)$ and $P_{\omega^2}(\omega^2)$
          have order~$2$, while $P_\omega(\omega^2)$ and $P_{\omega^2}(\omega)$
          have order~$4$.
    \item $T(2,4) = \{4\}$. The inclusion $T(2,4) \subseteq \{4\}$ follows from
          Corollary~\ref{C:rat}~(3) (recall that zero is not a permissible value).
          On the other hand, $4 = 2^2$ is clearly in~$T(2,4)$.
    \item $T(3,-3) = \{-3,9\}$. The inclusion `$\subseteq$' follows from
          Corollary~\ref{C:rat}~(2). Clearly $9 = 3^2 = (-3)^2 \in T(3,-3)$,
          and one checks that $-3 \in T(3)$.
  \end{enumerate}
\end{examples}

In a similar (but even simpler) way as we did it above regarding the
$2$-adic behavior of the~$\psi_n$, one can show the following.

\begin{proposition} \label{P:xfixed}
  For every $n \ge 1$, we have
  \begin{align*}
    \psi_n(\lambda, 0) &= a_n \lambda^{d(n)} \\
    \psi_n(\lambda, 1) &= a_n (1 - \lambda)^{d(n)} \\
    \psi_n(\lambda, \lambda) &= a_n \bigl(\lambda (1-\lambda)\bigr)^{d(n)}
  \end{align*}
  where $a_{2m+1} = (-1)^m$ and $a_{2m} = (-1)^{m-1} m$.
\end{proposition}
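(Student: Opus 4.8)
The plan is to evaluate the division polynomials $\psi_n(\lambda, x)$ at the three special values $x = 0$, $x = 1$, and $x = \lambda$ by the same kind of induction on~$n$ that was used to prove Proposition~\ref{P:rec}, but now working over~$\Z[\lambda]$ rather than modulo a power of~$2$. The key observation is that at each of these three points the factor $16 x^2(x-1)^2(x-\lambda)^2$ appearing in the recurrences for $\psi_{4m\pm1}$ vanishes identically, so the recursions collapse to the much simpler shape
\begin{align*}
  \psi_{4m+1} &= -\psi_{2m-1}\psi_{2m+1}^3, \\
  \psi_{4m-1} &= \psi_{2m+1}\psi_{2m-1}^3,
\end{align*}
together with the unchanged even relation $\psi_{2m} = \psi_m(\psi_{m+2}\psi_{m-1}^2 - \psi_{m-2}\psi_{m+1}^2)$. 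I would first carry out the argument for $x = 0$; the cases $x = 1$ and $x = \lambda$ then follow formally, since $\psi_n(\lambda, x)$ has integer coefficients and one checks directly from the recurrences (or from the geometric fact that $\lambda \mapsto 1-\lambda$, $x \mapsto 1-x$ and $\lambda \mapsto 1/\lambda$, $x \mapsto x/\lambda$ are symmetries of the Legendre family permuting $\{0,1,\lambda\}$) that substituting $x=1$ replaces $\lambda$ by $1-\lambda$ in the $x=0$ formula, and substituting $x=\lambda$ gives $\psi_n(\lambda,\lambda) = \lambda^{d(n)}\psi_n(1/\lambda, 0)$ up to the appropriate unit bookkeeping, which rearranges to $a_n(\lambda(1-\lambda))^{d(n)}$.

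For the $x=0$ case I would set $p_n \colonequals \psi_n(\lambda, 0) \in \Z[\lambda]$ and prove by strong induction that $p_n = a_n \lambda^{d(n)}$ with $a_{2m+1} = (-1)^m$, $a_{2m} = (-1)^{m-1}m$. The base cases $n \le 4$ are immediate: $p_1 = p_2 = 1$, $p_3 = \psi_3(\lambda,0) = -\lambda^2$, $p_4 = \psi_4(\lambda, 0) = 2(-\lambda)(\lambda)(\lambda) = -2\lambda^2$, matching $a_3 = -1$, $d(3)=2$ and $a_4 = -2$, $d(4)=2$. The inductive step splits into the four residue classes mod~$4$. For $n = 4m\pm1$ one uses the collapsed recurrences above, and the exponent identities $d(2m-1) + 3d(2m+1) = d(4m+1)$ and $d(2m+1) + 3d(2m-1) = d(4m-1)$ — already noted in the proof of Proposition~\ref{P:rec} — to get the $\lambda$-power right, while the sign is tracked through $a_{4m+1} = -a_{2m-1}a_{2m+1}^3$ and $a_{4m-1} = a_{2m+1}a_{2m-1}^3$, which one checks agree with $(-1)^{2m} = 1$ and $(-1)^{2m-1} = -1$ respectively. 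For $n = 2m$ even one uses the even recurrence; here the exponent identity needed is $d(m) + d(m+2) + 2d(m-1) = d(2m)$ (equivalently the version with $\psi_{m-2},\psi_{m+1}$), and one must also verify that the two terms $\psi_{m+2}\psi_{m-1}^2$ and $\psi_{m-2}\psi_{m+1}^2$ carry the same power of~$\lambda$ so that the difference does not cause unexpected cancellation of the leading term; combining the corresponding $a$-values gives $a_{2m} = a_m(a_{m+2}a_{m-1}^2 - a_{m-2}a_{m+1}^2)$, and a short case analysis on $m \bmod 2$ (and $m \bmod 4$ to pin down $v_2$) confirms this equals $(-1)^{m-1}m$.

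The main obstacle I expect is the even case: unlike the $2$-adic argument, where everything was killed modulo~$4$, here I need the coefficient $a_{2m} = (-1)^{m-1}m$ \emph{exactly}, including the factor of~$m$, so I must check that the combination $a_m(a_{m+2}a_{m-1}^2 - a_{m-2}a_{m+1}^2)$ genuinely reproduces the linear-in-$m$ growth and never collapses to zero. This requires splitting $m$ into parities and, when $m$ is even, writing $m = 2^v m'$ and verifying the arithmetic of $a_m = (-1)^{m/2-1}(m/2)$ composed with the neighbouring odd-index values $a_{m\pm1} = (-1)^{(m\pm1\mp1)/2}$ and $a_{m\pm2} = (-1)^{m/2\mp1\pm\text{(stuff)}}\cdot(m/2\pm1)$; the bookkeeping is entirely elementary but is the one place where a sign or a factor could slip. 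Everything else — the degree claims, the reduction between the three substitution points, and the odd-index steps — is routine given Proposition~\ref{P:psi} and the identities already recorded.
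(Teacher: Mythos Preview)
Your approach is exactly what the paper intends: since the paper only says ``In a similar (but even simpler) way as we did it above\ldots one can show the following'' without spelling out the argument, your plan of running the induction of Proposition~\ref{P:rec} over~$\Z[\lambda]$ with the collapsed recurrences (the factor $16x^2(x-1)^2(x-\lambda)^2$ vanishing at $x\in\{0,1,\lambda\}$) is precisely the intended proof, and your identification of the even step $a_{2m}=a_m(a_{m+2}a_{m-1}^2-a_{m-2}a_{m+1}^2)$ as the only place needing care is spot on. One small slip: you write $p_4=2(-\lambda)(\lambda)(\lambda)=-2\lambda^2$ and $d(4)=2$, but of course $d(4)=\lfloor 15/4\rfloor=3$ and the product is $-2\lambda^3$; also, when you carry out the symmetry reduction for $x=\lambda$ you should track the sign carefully, since $(\lambda(1-\lambda))^{d(n)}$ and $(\lambda(\lambda-1))^{d(n)}$ differ when $d(n)$ is odd --- it is cleaner simply to redo the same induction at $x=1$ and $x=\lambda$ directly from the base cases $\psi_3,\psi_4$ rather than invoking the $S_3$-symmetry.
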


{}From this, one can conclude that if $a$ and~$b$ are integers and $p$ is
a prime such that $a \equiv 0 \bmod p$ and $b \equiv 1 \bmod p$,
then for any $\lambda \in T(a,b) \setminus \{a,b\}$,
the order of the points $P_a(\lambda)$ and~$P_b(\lambda)$ must be
a multiple of~$2p$. Since this result is much weaker than what can be
obtained from the consideration of the $2$-adic behavior, we will not
pursue this further here. It may be worth while, however, to study
the $p$-adic behavior of the polynomials~$\psi_n$ for $p \neq 2$
in some detail.


\section{An unlikely intersection problem of Habegger, Jones and Masser}

In a recent preprint~\cite{HabeggerJonesMasser} Habegger, Jones and Masser
consider various specific unlikely intersection problems, one of which
asks for the set $T(2) \cap \bmu$, where $\bmu = \exp(2\pi i \Q) \subseteq \C$
denotes the set of roots of unity. The result they obtain in this case
(Theorem~5 in loc.~cit.) is that there is an effective constant $C > 0$
such that $[\Q(\zeta) : \Q] \le C$ for every $\zeta \in T(2) \cap \bmu$.
In this section we use the results of the previous section to obtain a much stronger result.

We continue to work with the ring $Z \subseteq \bar{\Q}$, the reduction map
$\rho \colon \bar{\Q} \to \PP^1(\bar{\F}_2)$ and the valuation map
$v \colon \bar{\Q} \to \Q \cup \{\infty\}$.

\begin{corollary}
  Let $\alpha \in \bar{\Q}$ be such that some conjugate of~$\alpha$
  is not in~$Z^\times$ (i.e., is not a 2-adic unit).
  Then $T(\alpha) \cap \bmu = \emptyset$.
  In particular, $T(2) \cap \bmu = \emptyset$.
\end{corollary}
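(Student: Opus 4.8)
The plan is to show that if $\zeta \in T(\alpha) \cap \bmu$, then every conjugate of $\alpha$ must be a $2$-adic unit, which contradicts the hypothesis. The key observation is that a root of unity $\zeta$ always satisfies $\rho(\zeta) \neq \infty$ (roots of unity are algebraic integers, hence integral at every place), and moreover $\rho(\zeta) \neq 0$: if $2 \nmid \operatorname{ord}(\zeta)$ then $\zeta$ reduces to a root of unity of odd order in $\bar{\F}_2^\times$, and if $2 \mid \operatorname{ord}(\zeta)$ then $\zeta$ is congruent to $1$ modulo the maximal ideal (as $-1 \equiv 1 \bmod 2$). In all cases $\rho(\zeta) \in \{0\}$ is impossible; in fact $\rho(\zeta) = 1$ whenever $\zeta$ has even order and $\rho(\zeta)$ is an odd-order root of unity otherwise, so always $\rho(\zeta) \notin \{0, \infty\}$.

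Now suppose $\zeta \in T(\alpha)$. First I would rule out $\rho(\alpha) = \infty$: by Theorem~\ref{T:Talpha}, $T(\alpha) = \{\alpha\} \cup T'(\alpha)$ with $\rho(T'(\alpha)) \subseteq \{\rho(\alpha^2)\}$. If $\rho(\alpha) = \infty$ then $\rho(\alpha^2) = \infty$ as well, so $\rho(\zeta) \in \{\rho(\alpha), \infty\} = \{\infty\}$ (using $\zeta = \alpha$ forces $\rho(\zeta) = \rho(\alpha) = \infty$ too), contradicting $\rho(\zeta) \neq \infty$. Hence $\alpha \in Z$. Similarly, applying Theorem~\ref{T:Talpha} again, either $\zeta = \alpha$ — in which case $\rho(\alpha) = \rho(\zeta) \neq 0$ — or $\rho(\zeta) = \rho(\alpha^2) = \rho(\alpha)^2$, which again gives $\rho(\alpha) \neq 0$ since $\rho(\zeta) \neq 0$ and squaring is a bijection on $\PP^1(\bar{\F}_2)$. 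So $\rho(\alpha) \notin \{0, \infty\}$, i.e.\ $\alpha \in Z^\times$: the chosen conjugate of $\alpha$ (via the embedding $i$) is a $2$-adic unit.

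The remaining point is that the hypothesis concerns \emph{some} conjugate of $\alpha$, while the argument above only controls the one picked out by $i$. But $T(\alpha)$ and the property "$P_\alpha(\lambda)$ is torsion of a given order" are Galois-equivariant in the obvious sense: for any $\sigma \in \Gal(\bar\Q/\Q)$ we have $T(\sigma\alpha) = \sigma T(\alpha)$, and $\sigma\zeta$ is again a root of unity. So if the hypothesis fails for some conjugate $\sigma\alpha$ — meaning $\sigma\alpha \notin Z^\times$, equivalently $\alpha \notin (\sigma^{-1}Z)^\times$, i.e.\ $\alpha$ is not a unit at the place $i \circ \sigma$ — then running the entire argument above with the embedding $i \circ \sigma$ in place of $i$ shows $T(\sigma\alpha) \cap \bmu = \emptyset$, hence $T(\alpha) \cap \bmu = \emptyset$. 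The only genuinely delicate step is the valuation bookkeeping on roots of unity (confirming $\rho(\zeta) \notin \{0,\infty\}$), which is elementary. For the final claim, $\alpha = 2$ has $v(2) = 1 > 0$, so $2 \notin Z^\times$, and $T(2) \cap \bmu = \emptyset$.
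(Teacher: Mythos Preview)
Your proof is correct and follows essentially the same approach as the paper: both arguments combine Theorem~\ref{T:Talpha} (which forces $\rho(T(\alpha)) \subseteq \{\rho(\alpha),\rho(\alpha^2)\}$) with the elementary fact that roots of unity are $2$-adic units, so $\rho(\bmu) \cap \{0,\infty\} = \emptyset$, together with the Galois-equivariance $T(\sigma\alpha) = \sigma T(\alpha)$ to reduce to the case where $\alpha$ itself is not a $2$-adic unit. Your write-up is somewhat more verbose (you argue the contrapositive and mix the two equivalent viewpoints of ``change the embedding'' versus ``replace $\alpha$ by a conjugate''), but the content is the same as the paper's three-line proof.
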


\begin{proof}
  After applying an automorphism of~$\bar{\Q}$ we can assume
  that $\alpha \notin Z^\times$, which implies that $\rho(\alpha) \in \{0, \infty\}$.
  By Theorem~\ref{T:Talpha} we have $T(\alpha) = \{\alpha\} \cup T'(\alpha)$
  with $\rho(T'(\alpha)) \subseteq \{\rho(\alpha^2)\}$. So
  $\rho(T(\alpha)) \subseteq \{0\}$ or $\rho(T(\alpha)) \subseteq \{\infty\}$.
  Since clearly
  $\rho(\bmu) \cap \{0, \infty\} = \emptyset$, the claim follows.
\end{proof}

The case of 2-adic units is more interesting.
Note that $\alpha \in T(\alpha) \cap \bmu$ when $\alpha \in \bmu$,
so we can definitely have non-empty intersections in this case.
Theorem~\ref{T:Talpha} tells us that any $\alpha \neq \zeta \in T(\alpha) \cap \bmu$
must satisfy $\rho(\zeta) = \rho(\alpha^2)$. There is a unique $\zeta_0 \in \bmu$
of odd order satisfying this requirement, and we obtain that
\[ T'(\alpha) \cap \bmu \subseteq \{\zeta_0 \zeta : \zeta \in \bmu_{2^\infty}\} \]
where $\bmu_{2^\infty}$ denotes the group of roots of unity of order~$2^m$
for some~$m$. From Theorem~\ref{T:Talpha2} we get the more precise requirement
\[ T'(\alpha) \subseteq \alpha^2 + 2 Z \,. \]
Write $\zeta_0^{-1} \alpha^2 = 1 + \eps$ with $v(\eps) > 0$. Then we must have
\[ \zeta \equiv 1 + \eps \bmod 2 Z \,. \]
This leads to the following.

\begin{corollary}
  Let $\alpha \in \bar{\Q}$ be such that all conjugates of~$\alpha$
  are in~$Z^\times$. Then $T(\alpha) \cap \bmu$ has at most two elements
  different from~$\alpha$; the set can be effectively determined.

  If $\alpha \in \bmu$, then
  \[ \{\alpha, \alpha^2\} \subseteq T(\alpha) \cap \bmu \subseteq \{\alpha, \alpha^2, -\alpha^2\} \]
  except when $\alpha = -1$, where we have $T(-1) \cap \bmu = \{-1\}$.
\end{corollary}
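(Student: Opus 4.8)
The plan is to combine the two earlier ``mod $2$'' inputs---Theorem~\ref{T:Talpha} and Theorem~\ref{T:Talpha2}---with the elementary structure of $\bmu$, namely that every root of unity factors uniquely as $\zeta_0\zeta_1$ with $\zeta_0$ of odd order and $\zeta_1\in\bmu_{2^\infty}$. The first task is to reduce to the case of a fixed conjugate: since $T(\leftexp{}{}\sigma\alpha) = \sigma T(\alpha)$ for $\sigma\in\Aut(\bar\Q)$ (the condition on $\alpha$ being torsion is Galois-equivariant) and $\sigma(\bmu)=\bmu$, the cardinality of $T(\alpha)\cap\bmu$ is Galois-stable, so we may assume $\alpha\in Z^\times$ itself when bounding the size. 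Then by the discussion preceding the corollary, any $\zeta\in T'(\alpha)\cap\bmu$ lies in the single coset $\zeta_0\bmu_{2^\infty}$ determined by $\rho(\zeta)=\rho(\alpha^2)$, and moreover $\zeta\equiv 1+\eps\bmod 2Z$ where $\zeta_0^{-1}\alpha^2 = 1+\eps$, $v(\eps)>0$.

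The heart of the argument is therefore: \emph{among $2$-power roots of unity, how many can lie in a single residue class mod $2Z$?} Writing $\zeta = \zeta_0^{-1}\cdot(\text{candidate})$, we need to count $\eta\in\bmu_{2^\infty}$ with $\eta\equiv 1+\eps'\bmod 2Z$ for a fixed target. The key computation is the valuation of $\zeta_{2^k}-1$ for a primitive $2^k$-th root of unity: one has $v(\zeta_2-1)=v(2)=1$ and $v(\zeta_{2^k}-1)=1/2^{k-1}$ for $k\ge 2$ (from the cyclotomic factorization $\prod_{j}(\zeta_{2^k}^{j}-1)=2$ over the primitive roots, or just from $\Phi_{2^k}(1)=2$). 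Consequently $\pm1$ are the only elements of $\bmu_{2^\infty}$ congruent to $\pm1$ mod $2Z$ (for any other $\eta$ we get $v(\eta\mp1)<1$, so $\eta\not\equiv\pm1$), and more generally two $2$-power roots of unity are congruent mod $2Z$ iff they are equal or negatives of each other. Hence $T'(\alpha)\cap\bmu$ is contained in a set of the form $\{\zeta_0\eta_1,\zeta_0\eta_2\}$ where $\eta_2=-\eta_1$, giving at most two elements beyond $\alpha$ itself; effectivity follows since $\zeta_0$ and the relevant mod-$2$ data are computable from $\alpha$, and membership in $T(\alpha)$ is decidable by Corollary~\ref{cor1}'s remark.

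For the second, sharper statement assume $\alpha\in\bmu$; then $\alpha\in Z^\times$ automatically and $\alpha^2\in T(\alpha)$ since $P_\alpha$ and $P_{\alpha^2}$ differ by doubling (indeed $f_\alpha(\alpha)$: one checks directly that the point with $x$-coordinate $\alpha$ on $E_{\alpha^2}$ has order dividing $4$, using $f_{\alpha^2}(\alpha)=(\alpha^2-\alpha^2)^2/(\cdots)=0\in\{0,1,\alpha^2,\infty\}$). So $\{\alpha,\alpha^2\}\subseteq T(\alpha)\cap\bmu$. For the upper bound, apply the first part: $T'(\alpha)\cap\bmu\subseteq\{\alpha^2,-\alpha^2\}$ because here $\zeta_0^{-1}\alpha^2=1$, i.e.\ $\eps=0$, so the candidates are exactly the $\eta\in\bmu_{2^\infty}$ with $\eta\equiv1\bmod 2Z$, which are $\eta=\pm1$, giving $\zeta\in\{\alpha^2,-\alpha^2\}$. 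This yields $\{\alpha,\alpha^2\}\subseteq T(\alpha)\cap\bmu\subseteq\{\alpha,\alpha^2,-\alpha^2\}$. The exceptional case $\alpha=-1$ is where $\alpha^2=1\notin\C\setminus\{0,1\}$, so that value is excluded and $-\alpha^2=-1=\alpha$; a direct check (or Corollary~\ref{C:rat}, noting $-1\equiv 3\bmod 4$) confirms $T(-1)\cap\bmu=\{-1\}$.

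\textbf{Main obstacle.} The only genuinely non-formal point is the valuation estimate $v(\zeta_{2^k}-1)=2^{1-k}$ and its corollary that distinct $2$-power roots of unity, other than a pair $\{\eta,-\eta\}$, are never congruent mod $2Z$; everything else is bookkeeping with the already-established Theorems~\ref{T:Talpha} and~\ref{T:Talpha2}. I would isolate that valuation fact as a short lemma (it is classical: the ramification of $\Q_2(\zeta_{2^k})/\Q_2$), and also double-check the boundary behavior ensuring that when $\eps\ne 0$ the two surviving candidates are honestly $\zeta$ and $-\zeta$ rather than collapsing---this is automatic since $-1\not\equiv 1\bmod 2Z$ would be false (indeed $-1\equiv1\bmod 2Z$!), so one must be slightly careful: the congruence condition mod $2Z$ alone does \emph{not} distinguish $\eta$ from $-\eta$, which is precisely why two candidates survive rather than one, and that is consistent with the stated bound.
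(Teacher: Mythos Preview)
Your argument is essentially the paper's: both reduce to the key fact $\bmu\cap(1+2Z)=\{\pm1\}$ (which you unpack via the ramification valuation $v(\zeta_{2^k}-1)=2^{1-k}$) combined with the inclusion $T'(\alpha)\subseteq\alpha^2+2Z$ from Theorem~\ref{T:Talpha2}, and then observe that when $\alpha\in\bmu$ the element $\alpha^2$ is itself one of the at most two roots of unity in that coset. One small imprecision: in the $\alpha\in\bmu$ case you assert $\zeta_0^{-1}\alpha^2=1$, but with $\zeta_0$ defined as the \emph{odd-order} root of unity reducing to $\rho(\alpha^2)$ this fails whenever $4\mid\ord(\alpha)$ (then $\zeta_0$ is only the odd part of $\alpha^2$); the conclusion $T'(\alpha)\cap\bmu\subseteq\{\pm\alpha^2\}$ is still correct, and the paper handles this by the same shortcut, simply declaring ``we can take $\zeta_0=\alpha^2$'' and tacitly dropping the odd-order constraint at that point.
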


\begin{proof}
  We note first that $\bmu \cap (1 + 2 Z) = \{-1,1\}$.
  Assume that $\zeta \in \bmu$
  satisfies $\zeta \equiv 1 + \eps \bmod 2$. If $\zeta'$ is another such
  root of unity, then $\zeta' \zeta^{-1} \in 1 + 2Z$ and so $\zeta' = \pm \zeta$.
  We conclude that $T'(\alpha) \cap \bmu \subseteq \{\pm \zeta_0 \zeta\}$.
  Note that we can effectively decide whether $\zeta$ exists, and if so, find it.
  This implies effectivity.

  For the second statement note that $\alpha$ and~$\alpha^2$ (unless $\alpha^2 = 1$)
  are always in~$T(\alpha)$. If $\alpha \in \bmu$, then we can take $\zeta_0 = \alpha^2$
  and $\eps = 0$ in the argument above, so that $-\zeta_0 = -\alpha^2$ is the
  only remaining possibility. When $\alpha = -1$, we have
  $\{\alpha, \pm\alpha^2\} \setminus \{0,1\} = \{-1\}$.
\end{proof}

When $\alpha \in \bmu$, we can actually rule out the occurrence of~$-\alpha^2$
in most cases. Note that $-\alpha^2 \in T(\alpha)$ implies that
\[ \rho(\alpha) = \rho(f_{-\alpha^2}(\alpha)) = \rho\Bigl(\frac{\alpha^2}{\alpha^2-1}\Bigr) \,. \]
This implies $\rho(\alpha^2 + \alpha + 1) = 0$, which
means that the order of~$\alpha$ is of the form $3 \cdot 2^m$.
In the next step, we have
\[ f_{-\alpha^2}\Bigl(\frac{\alpha^2}{\alpha^2-1}\Bigr)
                = \frac{(\alpha^4 - \alpha^2 + 1)^2}{4 \alpha^2 (\alpha^2 - 1)} \,,
\]
which must be zero or a 2-adic unit. In the first case
$\alpha^4 - \alpha^2 + 1 = 0$, which is satisfied by
the primitive 12th roots of unity. Otherwise we must have $v(\alpha^4 - \alpha^2 + 1) = 1$.
Since $\alpha^4 - \alpha^2 + 1 \equiv (\alpha^2 + \alpha + 1)^2 \bmod 2 Z$
and $\alpha - 1$ is a unit, we get $\alpha^3 = 1 + \eps$ with $v(\eps) \ge 1/2$,
and we know that $(1 + \eps)^{2^m} = 1$ for some~$m$.
This implies $m \le 2$, so that the order of~$\alpha$ is 3, 6 or~12.
One can check that in each of these cases we have indeed $-\alpha^2 \in T(\alpha)$.
We summarize our findings.

\begin{proposition}
  If $\alpha \in \bmu$, then
  \[ T(\alpha) \cap \bmu =
        \begin{cases}
          \{\alpha\} & \text{if $\alpha = -1$,} \\
          \{\alpha, \alpha^2, -\alpha^2\} & \text{if $\ord(\alpha) \in \{3,6,12\}$,} \\
          \{\alpha, \alpha^2\} & \text{otherwise.}
        \end{cases}
   \]
\end{proposition}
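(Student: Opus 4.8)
The plan is to combine the two preceding corollaries with a short case analysis that pins down exactly when $-\alpha^2 \in T(\alpha)$. From the second of the two corollaries above, for $\alpha \in \bmu$ we already know that $T(-1) \cap \bmu = \{-1\}$, while for $\alpha \neq -1$ we have the inclusions $\{\alpha, \alpha^2\} \subseteq T(\alpha) \cap \bmu \subseteq \{\alpha, \alpha^2, -\alpha^2\}$. So the only thing left to determine is whether the extra candidate $-\alpha^2$ actually lies in $T(\alpha)$, and the proposition asserts this happens precisely when $\ord(\alpha) \in \{3, 6, 12\}$.

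For the ``only if'' direction I would iterate the Latt\`es map $f_\lambda$ with $\lambda = -\alpha^2$, using the criterion $\lambda \in T(\alpha) \iff f_\lambda(\alpha) \in \{0,1,\lambda,\infty\}$ or $\lambda \in T(f_\lambda(\alpha))$ together with Theorem~\ref{T:Talpha}, which forces $\rho$ to be preserved along the orbit. First, $-\alpha^2 \in T(\alpha)$ implies (via the $\rho(f_\lambda(\alpha)) = \rho(\alpha)$ alternative of Lemma~\ref{L:Talpha2}, after checking $\lambda$ is not in $S(\alpha)$ in the relevant range) that $\rho(\alpha^2/(\alpha^2-1)) = \rho(\alpha)$, which rearranges to $\rho(\alpha^2 + \alpha + 1) = 0$, i.e. $\alpha$ has order $3 \cdot 2^m$ for some $m \ge 0$. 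Then compute one more iterate, $f_{-\alpha^2}(\alpha^2/(\alpha^2-1)) = (\alpha^4 - \alpha^2 + 1)^2 / \bigl(4\alpha^2(\alpha^2-1)\bigr)$: this must vanish (giving $\alpha^4 - \alpha^2 + 1 = 0$, the primitive 12th roots of unity) or be a $2$-adic unit, i.e. $v(\alpha^4 - \alpha^2 + 1) = 1$. Using $\alpha^4 - \alpha^2 + 1 \equiv (\alpha^2 + \alpha + 1)^2 \bmod 2Z$ and that $\alpha - 1$ is a unit, one gets $v(\alpha^3 - 1) \ge 1/2$; since $\alpha^3$ is a $2$-power root of unity congruent to $1$, and $\bmu_{2^\infty} \cap (1 + \pp^{1/2})$ forces the order of $\alpha^3$ to be at most $4$, one concludes $m \le 2$, so $\ord(\alpha) \in \{3, 6, 12\}$.

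For the ``if'' direction I would simply verify directly, for each of $\ord(\alpha) \in \{3,6,12\}$, that $-\alpha^2 \in T(\alpha)$: by the characterization of $T(\alpha)$ via preperiodicity of $\alpha$ under $f_{-\alpha^2}$, it suffices to check that the forward orbit of $\alpha$ hits $\{0,1,-\alpha^2,\infty\}$ after a few steps (for the 12th roots of unity this is immediate since $\alpha^4 - \alpha^2 + 1 = 0$ makes $f_{-\alpha^2}(\alpha^2/(\alpha^2-1)) = 0$; the cases of order $3$ and $6$ are a finite check on explicit algebraic numbers). Assembling: when $\alpha = -1$ use $T(-1) \cap \bmu = \{-1\}$; when $\ord(\alpha) \in \{3,6,12\}$ combine the lower bound $\{\alpha, \alpha^2\}$, the verified membership $-\alpha^2 \in T(\alpha)$, and the upper bound to get equality with $\{\alpha, \alpha^2, -\alpha^2\}$; otherwise the upper bound minus the excluded $-\alpha^2$ gives $\{\alpha, \alpha^2\}$ (noting $\alpha^2 \neq 1$ since $\alpha \neq \pm 1$, and for $\alpha = 1$ the set $\{1\}$ is excluded from the parameter space but is covered as it were by continuity of the statement — in any case $1 \notin \C \setminus \{0,1\}$).

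The main obstacle is the ``only if'' step, specifically getting a clean bound on how $2$-adically close a nontrivial $2$-power root of unity can be to $1$ and turning the condition $v(\alpha^4 - \alpha^2 + 1) = 1$ into the sharp constraint $m \le 2$; this requires being careful with the ramification of $\Q_2(\bmu_{2^\infty})$ and the precise valuations of $\zeta - 1$ for $\zeta$ of $2$-power order. The rest is bookkeeping with the earlier results and a short finite verification.
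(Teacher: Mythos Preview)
Your proposal is correct and follows essentially the same approach as the paper: reduce to deciding whether $-\alpha^2 \in T(\alpha)$ via the preceding corollary, then iterate the Latt\`es map~$f_{-\alpha^2}$ twice, using Lemma~\ref{L:Talpha2} at each step to force $\rho(\alpha^2+\alpha+1)=0$ (so $\ord(\alpha)=3\cdot 2^m$) and then $v(\alpha^4-\alpha^2+1)=1$, whence $v(\alpha^3-1)\ge 1/2$ and the $2$-adic ramification of $\bmu_{2^\infty}$ gives $m\le 2$; the converse is a direct check. The paper's argument is exactly this (it appears in the paragraph preceding the proposition), and your worry about the ``main obstacle'' is unfounded: the valuation $v(\zeta_{2^k}-1)=1/2^{k-1}$ immediately yields the bound.
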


Together with the previous results of this section, this implies that
\[ \max_{\alpha \in \C \setminus \{0,1\}} \#\bigl(T(\alpha) \cap \bmu\bigr) = 3 \]
and the maximum is attained exactly for the eight roots of $x^8 + x^4 + 1$.


\section{Application to the Weierstrass family}

In this section, we consider the family
\[ E_{A,B} \colon y^2 = x^3 + A x + B \]
of elliptic curves. We denote the corresponding division polynomials
by $\Psi_n(A,B,x)$. Then $\Psi_1 = \Psi_2 = 1$ as before, and
\begin{align*}
  \Psi_3 &= 3 x^4 + 6 A x^2 + 12 B x - A^2 \\
         &\equiv -(A - x^2)^2 \bmod 4 \Z[A,B,x] \\
  \Psi_4 &= 2\bigl(x^6 + 5 A x^4 + 20 B x^3 - 5 A^2 x^2 - 4 A B x - (8B^2 + A^3)\bigr) \\
         &\equiv 2 (A - x^2)^3 \bmod 4 \Z[A,B,x] \,.
\end{align*}
We have the same recurrence relations as before, with the factor $4 x (x-1) (x-\lambda)$
replaced by $4 (x^3 + A x + B)$. We apply Proposition~\ref{P:rec},
taking $R = \Z[A,B,x]$, $f = 4 (x^3 + A x + B)^2$ and $g = A - x^2$, to obtain
the following.

\begin{proposition} \label{P:W}
  For all $n \ge 1$, we have
  \[ \Psi_n(A,B,x) \equiv 2^{e(n)} (A - x^2)^{d(n)} \bmod 2^{e(n)+1} \Z[A,B,x] \,. \]
  We also have $\deg_A \Psi_n = d(n)$.
\end{proposition}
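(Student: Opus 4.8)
The plan is to follow the proof of Proposition~\ref{P:psi} essentially verbatim, since all of the arithmetic content has already been packaged into Proposition~\ref{P:rec}. Concretely, I would apply Proposition~\ref{P:rec} with $R = \Z[A,B,x]$, $f = 4(x^3+Ax+B)^2$ and $g = A - x^2$, so that it only remains to check the hypotheses. The conditions $\Psi_1 = \Psi_2 = 1$ hold by definition, while $\Psi_3 \equiv -g^2$ and $\Psi_4 \equiv 2g^3 \bmod 4R$ are exactly the two congruences recorded from the explicit formulas for $\Psi_3$ and~$\Psi_4$. The recurrence relations for the $\Psi_n$ are the ones displayed just above the proposition, and they match the template in Proposition~\ref{P:rec}: the quantity called $4f$ there equals $16(x^3+Ax+B)^2 = 4 \cdot 4(x^3+Ax+B)^2$ here. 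Proposition~\ref{P:rec} then yields $\Psi_n \equiv 2^{e(n)}(A-x^2)^{d(n)} \bmod 2^{e(n)+1}\Z[A,B,x]$ directly.

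For the degree claim I would argue as in Proposition~\ref{P:psi}. First, $\deg_A \Psi_n \le d(n)$ follows by induction on~$n$ from the three recurrences, using the elementary identities (valid for all relevant~$m$)
\[ d(m) + d(m+2) + 2d(m-1) = d(m-2) + d(m) + 2d(m+1) = d(2m), \qquad d(2m\mp1) + 3d(2m\pm1) = d(4m\pm1), \]
which are immediate from $d(2k) = k^2-1$ and $d(2k+1) = k(k+1)$; one also needs $\deg_A\bigl(16(x^3+Ax+B)^2\bigr) = 2$, so that the term $4f\,\Psi_{2m+2}\Psi_{2m}^3$ has $A$-degree at most $2 + d(2m+2) + 3d(2m) = d(4m+1)-1$, which is harmless. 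For the reverse inequality I would use the congruence just proved: the coefficient of $A^{d(n)}$ in~$\Psi_n$ is congruent modulo $2^{e(n)+1}$ to~$2^{e(n)}$ (since the coefficient of $A^{d(n)}$ in $(A-x^2)^{d(n)}$ is~$1$), hence is nonzero, so $\deg_A \Psi_n \ge d(n)$ and equality holds. (The same induction in fact also gives $\deg_x \Psi_n = \deg \Psi_n = 2d(n)$, but we will not need this.)

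I do not anticipate any genuine obstacle: the substantive estimate was carried out once and for all in Proposition~\ref{P:rec}, and the Weierstrass family differs from the Legendre family only in that the cubic $x(x-1)(x-\lambda)$ is replaced by $x^3+Ax+B$ in exactly the same structural slot, with initial congruences of the same shape. The only thing requiring a little care is the bookkeeping — checking that the stated recurrences for the $\Psi_n$ introduce no spurious powers of~$2$ relative to the template of Proposition~\ref{P:rec}, and running the degree induction with respect to~$\deg_A$ alone, the variable~$B$ playing no role in either half of the argument.
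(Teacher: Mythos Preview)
Your proposal is correct and follows exactly the approach taken in the paper: apply Proposition~\ref{P:rec} with $R = \Z[A,B,x]$, $f = 4(x^3+Ax+B)^2$, $g = A-x^2$, and handle the degree statement just as in Proposition~\ref{P:psi}. In fact the paper's own proof is essentially the single sentence preceding the proposition, so your write-up is if anything more detailed (in particular your explicit check that the $4f$-term contributes $A$-degree only $d(4m\pm1)-1$).
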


For $\alpha \in \C$, let $P_\alpha(A,B)$ (for $4 A^3 + 27 B^2 \neq 0$) be
a point with $x$-coordinate~$\alpha$ on~$E_{A,B}$ and define
\[ T_W(\alpha)
   = \{(A,B) \in \C^2 : 4 A^3 + 27 B^2 \neq 0, P_\alpha(A,B) \in (E_{A,B})_\tors\} \,.
\]
For any subset $\{\alpha_1, \ldots, \alpha_n\} \subseteq \C$, we set
\[ T_W(\alpha_1, \ldots, \alpha_n) = T_W(\alpha_1) \cap \ldots \cap T_W(\alpha_n)\,. \]

\begin{corollary} \label{C:W}
  Let $\alpha, \beta, \gamma \in Z$ such that $\rho(\alpha)$,
  $\rho(\beta)$ and~$\rho(\gamma)$ are pairwise distinct. Then the intersection
  $T_W(\alpha, \beta, \gamma) \cap (\C \times Z)$ is contained in
  \[ \{\bigl(-(\alpha^2+\alpha\beta+\beta^2), \alpha\beta(\alpha+\beta)\bigr),
                 \bigl(-(\alpha^2+\alpha\gamma+\gamma^2), \alpha\gamma(\alpha+\gamma)\bigr),
                 \bigl(-(\beta^2+\beta\gamma+\gamma^2), \beta\gamma(\beta+\gamma)\bigr)\} \,.
  \]
\end{corollary}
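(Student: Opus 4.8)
The plan is to run the same $2$-adic argument that proved Theorem~\ref{T:Talpha}, but now with three $x$-coordinates in the Weierstrass family rather than two in the Legendre family. Fix $(A,B) \in T_W(\alpha,\beta,\gamma) \cap (\C\times Z)$ and let $P_\alpha(A,B)$ have order $n_\alpha$. If $n_\alpha=2$ then $\alpha$ is a root of $x^3+Ax+B$; if $n_\alpha\ge 3$ then $\Psi_{n_\alpha}(A,B,\alpha)=0$. In the latter case Proposition~\ref{P:W} shows that $2^{-e(n_\alpha)}\Psi_{n_\alpha}(A,B,\alpha)\in Z[A]$ has unit leading coefficient as a polynomial in~$A$ (here we use $\alpha\in Z$, so $A-\alpha^2$ specializes to something in~$Z$), hence $A\in Z$ automatically and
\[ 0 = 2^{-e(n_\alpha)}\Psi_{n_\alpha}(A,B,\alpha) \equiv (A-\alpha^2)^{d(n_\alpha)} \bmod 2Z, \]
so $\rho(A)=\rho(\alpha^2)=\rho(\alpha)$. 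Doing the same for $\beta$ and~$\gamma$, and using that $\rho(\alpha),\rho(\beta),\rho(\gamma)$ are pairwise distinct, we see that \emph{at most one} of the three points can have order $\ge 3$; the other two must have order~$2$, i.e.\ their $x$-coordinates are roots of $x^3+Ax+B$.

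The second step is to turn "two of $\alpha,\beta,\gamma$ are roots of $x^3+Ax+B$" into the explicit list. Suppose, say, $\alpha$ and~$\beta$ are both roots of $x^3+Ax+B$. Then the third root is $-(\alpha+\beta)$, and comparing coefficients gives $A = -(\alpha^2+\alpha\beta+\beta^2)$ and $B = \alpha\beta(\alpha+\beta)$ (the $x$-coefficient is the second elementary symmetric function $\alpha\beta+\alpha(-\alpha-\beta)+\beta(-\alpha-\beta)$, the constant is minus the product). This produces exactly the first point in the asserted set; the symmetric choices of which pair has order~$2$ give the other two points. Conversely one should remark that $4A^3+27B^2\neq0$ is automatic here since the three roots $\alpha,\beta,-(\alpha+\beta)$ are pairwise distinct (distinctness of $\alpha$ and~$\beta$ follows from $\rho(\alpha)\neq\rho(\beta)$), so these are genuine points of~$T_W$.

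There are no serious obstacles; the only point requiring a little care is the very first reduction, where one must be sure that $A$ lands in~$Z$ rather than assuming it. This is exactly the monic-specialization trick used in the proof of Theorem~\ref{T:Talpha}: by Proposition~\ref{P:W} the $A$-degree of $\Psi_n$ equals $d(n)$ and the leading term reduces to $A^{d(n)}\bmod 2$, so after dividing by $2^{e(n)}$ we get a polynomial in~$A$ over~$Z$ whose leading coefficient is a $2$-adic unit; a root of such a polynomial is integral. Once all three of $\rho(\alpha),\rho(\beta),\rho(\gamma)$ are forced to equal $\rho(A)$ in the "order $\ge 3$" case, the pairwise-distinctness hypothesis does the rest. (As in the Legendre case, one could push further by iterating the duplication map to pin down $(A,B)$ inside these residue classes, but for the stated corollary the single step above suffices.)
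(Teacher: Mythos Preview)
Your argument is essentially the paper's own proof: use Proposition~\ref{P:W} to force $A\in Z$ and $\rho(A)=\rho(\alpha^2)$ whenever $P_\alpha(A,B)$ has order $\ge 3$, conclude that at most one of the three points can have order $\ge 3$, and then solve the resulting pair of linear equations in~$A,B$. One slip: the equality $\rho(\alpha^2)=\rho(\alpha)$ is false in general (Frobenius is a nontrivial bijection on~$\bar\F_2$); what you actually get is $\rho(A)=\rho(\alpha^2)$ and $\rho(A)=\rho(\beta^2)$, hence $\rho(\alpha)^2=\rho(\beta)^2$, and then injectivity of squaring gives the contradiction $\rho(\alpha)=\rho(\beta)$. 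Your final remark that the listed pairs satisfy $4A^3+27B^2\neq 0$ is not needed for the containment statement (and your justification is incomplete: $\rho(\alpha)\neq\rho(\beta)$ does not by itself rule out $-(\alpha+\beta)\in\{\alpha,\beta\}$).
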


\begin{proof}
  Assume that $(A,B) \in T_W(\alpha, \beta, \gamma)$ with $B \in Z$.
  Assume further that at least two of the points $P_\alpha(A,B)$, $P_\beta(A,B)$
  and~$P_\gamma(A,B)$ have order~$\ge 3$, say the first two.
  Then Proposition~\ref{P:W} implies
  that $A \in Z$ and that $\rho(\alpha^2) = \rho(A) = \rho(\beta^2)$,
  which contradicts the assumption. It follows that at least two of the points
  must have order~$2$, say again the first two. We must then have
  \[ \alpha^3 + A \alpha + B = \beta^3 + A \beta + B = 0 \,. \]
  The unique solution of this system of linear equations is
  \[ (A, B) = \bigl(-(\alpha^2+\alpha\beta+\beta^2), \alpha\beta(\alpha+\beta)\bigr) \,. \]
  The other two choices of two points give rise to the other two possible pairs.
\end{proof}

If one could rule out the possibility that $B \notin Z$,
then it would follow that $T_W(\alpha, \beta, \gamma)$ is finite.

What one can say is the following. Assume that $B \notin Z$
and that $P_\alpha(A,B)$ has order~$2$. Then $A = -B/\alpha - \alpha^2$, so $v(A) \le v(B)$.
The polynomials $\Psi_n(A,B,x)$ are weighted-homogeneous of degree~$2 d(n)$
if $x$ has weight~$1$, $A$ has weight~$2$ and $B$ has weight~$3$.
Also, as a polynomial in~$A$, $2^{-e(n)} \Psi_n$ has degree~$d(n)$ and odd
leading coefficient. This implies that in $\Psi_n(A,B,\beta)$ (say),
the term involving the monomial $A^{d(n)}$ will be the unique term with
minimal valuation, hence $\Psi_n(A,B,\beta) \neq 0$. So it remains to exclude
the possibility that all three points have finite order~$\ge 3$ and $v(B) < 0$.

We note that Mavraki~\cite{MavrakiPreprint} studies the case $A = 0$.


\section{The case of transcendence degree 1 in the Legendre family} \label{S:trdeg1}

We now return to the Legendre family.
We have seen above that $T(\alpha,\beta) = \emptyset$ if $\alpha$ and~$\beta$
are algebraically independent over~$\Q$. What can we say when $\Q(\alpha,\beta)$
has transcendence degree~$1$?
Let $F \in \Z[a,b]$ be primitive and irreducible and such that $F(\alpha, \beta) = 0$.
Assume that $\lambda \in T(\alpha,\beta)$. This means that
$\psi_m(\lambda,\alpha) = 0$ for some $m \ge 3$ or $\lambda = \alpha$,
and $\psi_{m'}(\lambda,\beta) = 0$ for some $m' \ge 3$ or $\lambda = \beta$.
We can replace both $m$ and~$m'$ by their least common multiple~$n$.
Eliminating~$\lambda$, we see that $F(a,b)$ must divide the resultant with respect
to~$\lambda$ of $\psi_n(\lambda, a)$ and~$\psi_n(\lambda, b)$, or else $F$ divides
$\psi_n(a,b)$ or~$\psi_n(b,a)$.

\begin{definition}
  For $m \ge 3$, let
  \[ R_m(a,b) = \frac{\Res_{\lambda}\bigl(\psi_m(\lambda,a), \psi_m(\lambda,b)\bigr)}%
                     {(a-b)^{\deg_\lambda \psi_m}}
              \in \Z[a,b] \,.
  \]
\end{definition}

The following result provides the key step in the proof that $T(\alpha, \beta)$
has at most one element in the case of transcendence degree~$1$.

\begin{proposition} \label{P:res}
  For all $m \ge 3$, the polynomial $R_m(a,b)$ is squarefree in~$\Q[a,b]$.
\end{proposition}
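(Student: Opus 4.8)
The plan is to prove squarefreeness by working over the algebraic closure and showing that $R_m$ has no repeated roots along its zero set, using the structure of the problem: a point $\lambda$ in the zero locus of $R_m(a,b)$ essentially encodes a pair $(\alpha,\beta)$ together with a common parameter $\lambda$ making both torsion. I would first reduce to understanding the geometry of the curve $\psi_m(\lambda,x)=0$ in the $(\lambda,x)$-plane and the projection to the $\lambda$-line. A multiple factor in $R_m$ would correspond to a component of the fiber product $\{(\lambda,a,b): \psi_m(\lambda,a)=\psi_m(\lambda,b)=0\}$ (modulo the diagonal $a=b$) over which the projection $(\lambda,a,b)\mapsto(a,b)$ is ramified, or equivalently to a point $(\alpha,\beta)$ over which two of the $\lambda$-values collide. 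So the first key step is to translate "$R_m$ is not squarefree" into "there exist $\alpha,\beta$ (with $\alpha\neq\beta$) and a $\lambda$ such that $\psi_m(\lambda,\alpha)=\psi_m(\lambda,\beta)=0$ and, moreover, $\partial_\lambda\psi_m(\lambda,\alpha)=0$ or some resultant-derivative condition holds" — i.e. a tangency in the $\lambda$-direction.

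The second step is to pass to the 2-adic picture, which is where the earlier results of the paper become the decisive tool. By Proposition~\ref{P:psi}, $\psi_n(\lambda,x)\equiv 2^{e(n)}(\lambda-x^2)^{d(n)}\bmod 2^{e(n)+1}\Z[\lambda,x]$; dividing by $2^{e(n)}$ gives a polynomial congruent to $(\lambda-x^2)^{d(n)}$ mod $2$. This means that, after reduction, the curve $\psi_m(\lambda,x)=0$ degenerates to the (thickened) parabola $\lambda=x^2$, and any $\lambda$ on it with $\psi_m(\lambda,\alpha)=0$ forces $\rho(\lambda)=\rho(\alpha^2)$ (cf. the proof of Theorem~\ref{T:Talpha}). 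The idea is then: a repeated factor of $R_m$ would give a family of collisions of $\lambda$-values over a positive-dimensional set of $(\alpha,\beta)$; but the 2-adic congruence pins down $\rho(\lambda)$ as a function of $\rho(\alpha)$ (and of $\rho(\beta)$), and squaring being a bijection on $\PP^1(\Fbar_2)$ forces $\rho(\alpha)=\rho(\beta)$ on that set. One then localizes: after a coordinate change moving $\rho(\alpha)=\rho(\beta)$ to a convenient value (using the symmetries $\lambda\in T(\alpha)\iff 1-\lambda\in T(1-\alpha)\iff 1/\lambda\in T(1/\alpha)$), one examines $2^{-e(n)}\psi_n$ as a polynomial in $\lambda$ over the 2-adic completion and shows that, generically along the putative bad locus, it is separable, i.e. has no repeated $\lambda$-root — exactly the kind of Hensel/Newton-polygon argument already used in the proofs of Theorems \ref{T:Talpha} and~\ref{T:Talpha2}, but now applied to the discriminant in $\lambda$ rather than just to the leading behavior.

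The third step handles the quotient by $(a-b)^{\deg_\lambda\psi_m}$: one must check that dividing out this diagonal factor does not re-introduce a square, i.e. that $(a-b)$ divides $\Res_\lambda(\psi_m(\lambda,a),\psi_m(\lambda,b))$ to exactly the order $\deg_\lambda\psi_m=d(m)$ and that the quotient $R_m$ does not vanish on all of $a=b$ again. Since when $a=b$ the two division polynomials coincide and the resultant of $\psi_m(\lambda,a)$ with itself vanishes, the order of vanishing is governed by $\deg_\lambda\psi_m$ times the multiplicity pattern; here the congruence $\psi_m\equiv 2^{e(m)}(\lambda-a^2)^{d(m)}$ shows that, 2-adically, $\psi_m(\lambda,a)$ has a single $\lambda$-root of multiplicity $d(m)$ up to units, so the diagonal contributes precisely the factor $(a-b)^{d(m)}$ and no more — making $R_m$ well-defined and nonzero on the diagonal.

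\textbf{Main obstacle.} I expect the hard part to be the global-to-local argument in the middle: ruling out that the set of $(\alpha,\beta)$ over which two $\lambda$-roots collide could be an entire irreducible plane curve, rather than just isolated points, purely from the mod-$2$ information. The mod-$2$ congruence constrains $\rho(\lambda)$, but $\rho$ is far from injective, so one needs the finer statement from Theorem~\ref{T:Talpha2} (that $T(\alpha)\subseteq S(\alpha)\cup(\alpha^2+2Z)$) together with a careful Newton-polygon analysis of the $\lambda$-discriminant of $2^{-e(n)}\psi_n$ to see that collisions of $\lambda$-roots are "codimension $\ge 2$" in the naive count and hence cannot occur along a curve. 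Packaging this cleanly — perhaps by arguing that the reduction of $R_m$ modulo $2$ (or a suitable twist) is itself visibly squarefree, which immediately forces $R_m$ to be squarefree in characteristic $0$ — is likely the cleanest route, and identifying the exact form of that mod-$2$ reduction is the crux.
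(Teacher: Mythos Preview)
Your proposed ``cleanest route'' --- showing that the reduction of $R_m$ modulo~$2$ is squarefree --- fails outright, and this failure is symptomatic of why the whole $2$-adic strategy, at the level of precision you describe, cannot work. From the congruence $2^{-e(m)}\psi_m(\lambda,x) \equiv (\lambda - x^2)^{d(m)} \pmod 2$ you get (for $m$ odd, say)
\[
  \Res_\lambda\bigl(\psi_m(\lambda,a),\psi_m(\lambda,b)\bigr)
     \equiv \Res_\lambda\bigl((\lambda-a^2)^{d(m)},(\lambda-b^2)^{d(m)}\bigr)
     = (a^2-b^2)^{d(m)^2}
     \equiv (a-b)^{2d(m)^2} \pmod 2,
\]
so $R_m \equiv (a-b)^{2d(m)^2 - d(m)} \pmod 2$, which is about as far from squarefree as possible. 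The same computation shows that your step~3 argument is backwards: the $2$-adic picture suggests the diagonal occurs to order $2d(m)^2$, not $d(m)$; the correct order $d(m)$ comes from the fact that $\psi_m(\lambda,a)$ is \emph{separable} in~$\lambda$ over~$\C(a)$ (the covering $\{\psi_m=0\}\to\PP^1_\lambda$ is \'etale away from $0,1,\infty$), a fact the mod-$2$ congruence actively obscures.

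More fundamentally, squarefreeness of $R_m$ is exactly the statement that the $d(m)(2d(m)-1)$ Puiseux branches $b = b_{j,j'}(a)$ coming from composing ``$\lambda$ as a function of $a$'' with ``$b$ as a function of $\lambda$'' are pairwise distinct. The mod-$2$ information you invoke says precisely that all of these branches have the \emph{same} leading behaviour ($\rho(\lambda)=\rho(a^2)$, then $\rho(b)=\rho(a)$), so it cannot separate them; even the refinement in Theorem~\ref{T:Talpha2} only pins down one further $2$-adic digit, whereas you need to separate on the order of $d(m)^2$ branches. A Newton-polygon argument of the depth required would amount to computing the full $2$-adic expansions of all torsion $x$-coordinates, which you have not indicated how to do.

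The paper's proof works at a different degeneration: it lets $a\to 0$ (forcing $\lambda\to 0$ by Proposition~\ref{P:xfixed}) and uses the Tate uniformisation $E_\lambda \cong \C^\times/q^{\Z}$ near the cusp $\lambda=0$. There the torsion $x$-coordinates are given by explicit Laurent--Puiseux series in $Q$ (with $q=Q^2$), parameterised by $u=\zeta_{2n}^k Q^{\ell/n}$, and one can write down the leading two terms of every composed branch $\Xi_{k',\ell'}\circ Q_{k,\ell}$ and check by hand that no two coincide. The point is that the multiplicative uniformisation at the cusp separates the torsion points by their \emph{valuations} and \emph{angles} $(\ell/n, k/2n)$, information that the $2$-adic reduction of $\psi_m$ simply does not retain.
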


\begin{proof}
  We consider the behavior
  of $R_m(a,b)$ as $a$ tends to zero. By Proposition~\ref{P:xfixed}, if
  $\psi_m(\lambda, a) = 0$ and $a \to 0$, then $\lambda \to 0$ as well.
  Since clearly $R_m(a,b)$ divides~$R_n(a,b)$ if $m$ divides~$n$, it is
  sufficient to consider the case that $m = 2n$ is even.

  In the following, we use the symbol $\propto$ to denote equality up to a multiplicative
  constant. By standard properties of resultants, we have
  \[ (b-a)^{n^2-1} R_{2n}(a,b) \propto \prod_{j=1}^{n^2-1} \psi_{2n}(\lambda_j(a), b) \,, \]
  where the $\lambda_j(a)$ are Puiseux series over~$\C$ that represent the roots
  of $\psi_{2n}(\lambda, a)$ as a polynomial in~$\lambda$
  over the power series ring~$\C\pws{a}$.
  Since $\lambda_j$ tends to zero with~$a$, all these series have positive valuation.
  Factoring $\psi_{2n}(\lambda, x) \propto \prod_{j=1}^{2n^2-2} \bigl(x - x_j(\lambda)\bigr)$,
  where $x_j(\lambda)$ are Puiseux series in~$\lambda$, we get the decomposition
  \[ (b-a)^{n^2-1} R_{2n}(a,b)
      \propto \prod_{j=1}^{n^2-1}
        \prod_{j'=1}^{2n^2-2} \bigl(b - (x_{j'} \circ \lambda_j)(a)\bigr) \,.
  \]
  If we can show that the series $x_{j'} \circ \lambda_j$ are pairwise distinct
  (except when $(x_{j'} \circ \lambda_j)(a) = a$, which will occur for a unique~$j'$
  for each~$j$), then this will prove that $R_{2n}(a,b)$ is squarefree.

  To write down these series explicitly, we use the Tate parameterization of~$E_\lambda$.
  Recall that there are power series
  \[ a_4(q) = -5 \sum_{n=1}^\infty \frac{n^3 q^n}{1-q^n} \qquad\text{and}\qquad
    a_6(q) = -\frac{1}{12} \sum_{n=1}^\infty \frac{(7 n^5 + 5 n^3) q^n}{1-q^n}
  \]
  and
  \begin{align*}
    X(u,q) &= \sum_{n=-\infty}^\infty \frac{u q^n}{(1- u q^n)^2}
                - 2 \sum_{n=1}^\infty \frac{q^n}{(1-q^n)^2} \in \Q(u)\pws{q} \\
    Y(u,q) &= \sum_{n=-\infty}^\infty \frac{u^2 q^n}{(1- u q^n)^3}
                + \sum_{n=1}^\infty \frac{q^n}{(1-q^n)^2} \in \Q(u)\pws{q}
  \end{align*}
  such that $\bigl(X(\cdot, q), Y(\cdot, q)\bigr)$ induces a group isomorphism
  of $\C^\times\!/q^{\Z}$ with the $\C$-points on
  $E_{\text{Tate}}(q) \colon y^2 + xy = x^3 + a_4(a) x + a_6(q)$, when $0 < |q| < 1$.
  See for example~\cite{ATAEC}*{Chapter~V}.

  We match this up with~$E_\lambda$: for suitable~$q = Q^2$, we have an isomorphism
  $\phi \colon E_{\text{Tate}}(Q^2) \cong E_\lambda$ such that
  $\phi\bigl(X(-1,Q^2), Y(-1,Q^2)\bigr) = (1,0)$ and
  $\phi\bigl(X(Q,Q^2), Y(Q,Q^2)\bigr) = (0,0)$.
  The $x$-coordinate on~$E_\lambda$ is then given in terms of~$u$ by
  \begin{align*}
    x(u,Q) &= \frac{X(u,Q^2) - X(Q,Q^2)}{X(-1,Q^2) - X(Q,Q^2)} \\
          &= -\frac{4}{(1-u)^2}
                \Bigl(u - 2(1+u)^2 Q
                        + (1+u)^2 (1+8u+u^2) \frac{Q^2}{u} \\
          & \qquad\qquad\qquad\quad{} -8 (1+u)^2 (1+3u+u^2) \frac{Q^3}{u} + \ldots \Bigr) \\
          &\in \Q(u)\pws{Q^2/u} + Q \Q(u)\pws{Q^2/u}
  \end{align*}
  and from $x(-Q,Q) = \lambda$ we have the relation
  \[ \lambda = 16 (Q - 8 Q^2 + 44 Q^3 - 192 Q^4 + 718 Q^5 - 2400 Q^6 + 7352 Q^7 + \ldots) \,. \]
  We use $Q$ as our parameter instead of~$\lambda$ and $\xi(u,Q) = -x(u,Q)/4$
  instead of~$x$; this simplifies the formulas.

  We first consider the series in~$Q$ expressing the $\xi$-coordinates.
  To obtain a further simplification, we set $\xi = \Xi/(1-\Xi)^2$ (with $\Xi$ tending
  to zero with~$\xi$). Then we get the somewhat simpler relation
  \[ 
    \Xi(u, Q) = u - 2 (1-u^2) Q + (1-u^2) (1-3u^2) \frac{Q^2}{u} + 4 (1-u^2)^2 Q^3 + O(Q^4/u) \,.
  \] 
  Fix an $n$th root~$w$ of~$Q$. We set $\zeta_m = \exp(2\pi i/m)$.
  The $\Xi$-coordinates of the $2n$-torsion points are then given
  by~$\Xi(\zeta_{2n}^k w^{\ell}, Q)$,
  where $\ell \in \{0,1,\ldots,n\}$ and $k \in \{0,1,\ldots,2n-1\}$.
  For $\ell = 0$ or $\ell = n$, we restrict to $0 < k < n$
  (this also excludes the 2-torsion points).
  Plugging $u = \zeta_{2n}^k w^{\ell} = \zeta_{2n}^k Q^{\ell/n}$
  into the series for~$\xi$, we obtain the relation
  \[ \Xi_{k,\ell}(Q)
      = \zeta_{2n}^k Q^{\ell/n} - 2 Q + \zeta_{2n}^{-k} Q^{2-\ell/n} + O(Q^{1+2\ell/n})  \,.
  \]
  We set $\gamma_k = \zeta_{2n}^k - 2 + \zeta_{2n}^{-k} = 2(\cos\frac{k\pi}{n} -1)$.
  For $\ell = 0$, we get
  \[ \xi_{k,0}(Q) = \frac{1}{\gamma_k}\bigl(1 - 2(\gamma_k+4)Q + (\gamma_k+4)(\gamma_k+10)Q^2
                                             + \ldots\bigr) \,, 
  \]
  which tends to the nonzero value $\gamma_k^{-1}$ as $Q \to 0$.
  For $0 < \ell < n$, the first two leading terms in~$\Xi_{k,\ell}(Q)$ are
  \[ \Xi_{k,\ell}(Q) = \zeta_{2n}^k Q^{\ell/n} - 2 Q + \ldots \,, \]
  and for $\ell = n$, we have
  \[ \Xi_{k,n}(Q) = \gamma_k Q + 2 \gamma_k (\gamma_k + 2) Q^3 + \ldots \,. \]

  Now we express $Q$ in terms of~$\Xi$. We know that $Q$ tends to zero with~$\Xi$,
  so we must have $0 < \ell \le n$ in the relations above. Solving for $Q$,
  we obtain for $0 < \ell < n$
  \[ Q_{k,\ell}(\Xi) = \zeta_{2\ell}^{-k} \Xi^{n/\ell}
                        + \frac{2n}{\ell} \zeta_{2\ell}^{-2k} \Xi^{2n/\ell-1} + \ldots \,,
  \]
  where we can restrict to $0 \le k < 2\ell$. For $\ell = n$, we get
  \[ Q_{k,n}(\Xi) = \frac{1}{\gamma_k} \Xi
                      - 2 \frac{\gamma_k+2}{\gamma_k^3} \Xi^3 + \ldots \,.
  \]
  Here, $0 < k < n$ as before. In total, we obtain
  \[ \bigl(2 + 4 + 6 + \ldots + (2n-2)\bigr) + (n-1)
      = (n-1)n + (n-1) = n^2 - 1 = d(2n) = \deg_\lambda \psi_{2n}
  \]
  values of~$Q$ in terms of~$\Xi$; this accounts for all possibilities.
  We observe that the $n^2 - 1$ series $Q_{k,\ell}$ all have distinct leading terms
  (note that $0 > \gamma_1 > \gamma_2 > \ldots > \gamma_{n-1} > -4$).

  We first consider the series of the form $\xi_{k',0} \circ Q_{k,\ell}$.
  They have the form
  \[ (\xi_{k',0} \circ Q_{k,\ell})(\alpha)
      = \frac{1}{\gamma_{k'}}\bigl(1 - 2(\gamma_{k'}+4)Q_{k,\ell}(\alpha) + \ldots\bigr) \,.
  \]
  The constant term determines $k'$, and the next term determines the leading
  term of~$Q_{k,\ell}$ and therefore $k$ and~$\ell$. So all these series are
  pairwise distinct (and also distinct from all series $\xi_{k',\ell'} \circ Q_{k,\ell}$
  with $\ell' > 0$, since these series have positive valuation).

  For the remaining series, we work with $\Xi$ instead of~$\xi$, so we consider
  $\Xi_{k',\ell'} \circ Q_{k,\ell}$, where now $\ell' > 0$.
  We obtain the following different cases, where $\ell, \ell' < n$.
  \begin{align*}
    (\Xi_{k',\ell'} \circ Q_{k,\ell})(\alpha)
      &= \zeta_{2\ell n}^{k'\ell-k\ell'} \alpha^{\ell'/\ell}
          + \frac{2\ell'}{\ell} \zeta_{2\ell n}^{k'\ell-k\ell'} \zeta_{2\ell}^{-k}
              \alpha^{(n+\ell')/\ell-1} + \ldots
      & & \text{if $\ell' < \ell$} \\
    (\Xi_{k',\ell'} \circ Q_{k,\ell})(\alpha)
      &= \zeta_{2n}^{k'-k} \alpha
          + 2 (\zeta_{2n}^{k'-k} - 1) \zeta_{2\ell}^{-k} \alpha^{n/\ell} + \ldots
      & & \text{if $\ell' = \ell$} \\
    (\Xi_{k',\ell'} \circ Q_{k,\ell})(\alpha)
      &= \zeta_{2\ell n}^{k'\ell-k\ell'} \alpha^{\ell'/\ell}
          - 2 \zeta_{2\ell}^{-k} \alpha^{n/\ell} + \ldots
      & & \text{if $\ell' > \ell$} \\
    (\Xi_{k',\ell'} \circ Q_{k,n})(\alpha)
      &= \frac{\zeta_{2n}^{k'}}{\gamma_k^{\ell'/n}} \alpha^{\ell'/n}
          - \frac{2}{\gamma_k} \alpha + \ldots
      & & \\
    (\Xi_{k',n} \circ Q_{k,\ell})(\alpha)
      &= \gamma_{k'} \zeta_{2\ell}^{-k} \alpha^{n/\ell}
          + \frac{2n}{\ell} \gamma_{k'} \zeta_{2\ell}^{-2k} \alpha^{2n/\ell-1} + \ldots
      & & \\
    (\Xi_{k',n} \circ Q_{k,n})(\alpha)
      &= \frac{\gamma_{k'}}{\gamma_k} \alpha
          + 2\frac{\gamma_{k'} (\gamma_{k'} - \gamma_k)}{\gamma_{k}^3} \alpha^3 + \ldots
      & &
  \end{align*}
  We note that the second term vanishes if and only if $(k',\ell') = (k,\ell)$,
  in which case we obtain the excluded trivial series~$\alpha$.
  The last case in the list above is distinguished from the second by the fact
  that the leading coefficient has absolute value $\neq 1$. Taking this into
  account, the orders of the first two terms determine $\ell$ and~$\ell'$.
  In all cases but the last, one easily sees that the coefficients of the
  first and the second term together determine $k$ and~$k'$. In the last case, writing
  $\rho = \gamma_{k'}/\gamma_k \neq 1$ for the first coefficient, the second coefficient
  can be written as $2\rho(\rho-1)/\gamma_k$, so both together determine~$k$ and
  then also~$k'$ again.

  So in all cases, the series $\Xi_{k',\ell'} \circ Q_{k,\ell}$ determines
  the two pairs $(k,\ell)$ and~$(k',\ell')$ uniquely (unless $(k,\ell) = (k',\ell')$).
  As noted earlier, this implies the claim.
\end{proof}

Before we deduce consequences of this result, we need to introduce some
further objects. For $n \ge 3$, let $Z_n \subseteq \PP^1_a \times \PP^1_b \times \PP^1_\lambda$
be the curve given by the equations $\psi_n(\lambda, a) = \psi_n(\lambda, b) = 0$,
but excluding the components contained in the plane $a = b$.
Since (for given~$\lambda$) the roots of~$\psi_n(\lambda, x)$ correspond to the
$x$-coordinates of the points in $E_\lambda[n] \setminus E_\lambda[2]$,
the Galois group~$G_n$ of $\psi_n(\lambda, x)$ over~$\Q(\lambda)$ is $\PGL(2, \Z/n\Z)$
when $n$ is odd, and is the subgroup
of $\PGL(2, \Z/n\Z)$ consisting of elements represented by matrices reducing
to the identity mod~$2$ when $n$ is even. Over~$\C(\lambda)$, we have to replace
$\PGL$ by~$\PSL$; write $G'_n$ for the resulting group.

Denote by $T_n$ the set of pairs of
opposite elements of $(\Z/n\Z)^2$ that are not killed by~$2$.
Then the action of~$G_n$ on the roots is the standard action on~$T_n$.
It follows that over~$\C$, $Z_n \to \PP^1_\lambda$ is a Galois covering with group
$G'_n$ acting diagonally on $T_n \times T_n \setminus \Delta$,
where $\Delta$ denotes the diagonal.
Therefore $Z_n$ splits into geometric components corresponding to the orbits
of~$G'_n$ on~$T_n \times T_n \setminus \Delta$.
(The irreducible components over~$\Q$ correspond to the orbits of~$G_n$).

Note that the equation $R_n(a,b) = 0$ describes the projection of~$Z_n$
to $\PP^1_a \times \PP^1_b$. Proposition~\ref{P:res} then says that this
projection maps $Z_n$ birationally onto its image, which we denote~$C_n$.

We can write
\[ \psi_n(\lambda, x) = \prod_{d \mid n} \tilde{\psi}_d(\lambda, x) \,, \]
where $\tilde{\psi}_n(\lambda, x)$, considered as a polynomial in~$x$
over~$\Q(\lambda)$, has as its roots exactly the $x$-coordinates of points
of exact order~$n$ on~$E_\lambda$ (if $n > 2$; we obviously have
$\tilde{\psi}_1 = \tilde{\psi}_2 = 1$).
In~\cite{MasserZannier2013}*{Lemma~2.1}, Masser and Zannier prove that $\tilde{\psi}_n$
is absolutely irreducible if $n \ge 3$ is odd and that $\tilde{\psi}_n$
splits into three irreducible factors in~$\Q[\lambda, x]$, which are
absolutely irreducible if $n \ge 4$ is even (they correspond to fixing
the point of order~$2$ obtained as $(n/2) \cdot P_x(\lambda)$).
We will reserve the term \emph{bicyclotomic polynomial} for these
(absolutely) irreducible factors. So in the notation of~\cite{MasserZannier2013},
a bicyclotomic polynomial~$B(\lambda, x)$ has the form
(note the reversal of the order of the variables)
\[ B(\lambda, x) = B_n^*(x, \lambda) \qquad \text{for $n \ge 3$ odd} \]
or
\[ B(\lambda, x) = B_n^{(0)}(x, \lambda), B_n^{(1)}(x, \lambda) \quad\text{or}\quad
                   B_n^{(\infty)}(x, \lambda) \qquad \text{for $n \ge 4$ even.}
\]
The index~$n$ is the \emph{order} of~$B$.
(There are also the three polynomials $x$, $x-1$ and~$x - \lambda$ of order~$2$,
which we will not call `bicyclotomic'.)

\begin{lemma} \label{L:eqdeg}
  Let $C$ be some geometric irreducible component of~$C_n$, for some~$n \ge 3$.
  There are bicyclotomic polynomials $B_1(\lambda, x)$ and~$B_2(\lambda, x)$
  such that $C$ is contained in the projection of $B_1(\lambda, a) = B_2(\lambda, b) = 0$.
  Let $F(a,b) = 0$ be an equation for~$C$. Then $\deg_a F = \deg_b F$, and
  this degree is a multiple of $\lcm(\deg_\lambda B_1, \deg_\lambda B_2)$.
\end{lemma}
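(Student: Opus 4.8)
The plan is to pass to the Galois cover of~$\PP^1_\lambda$ attached to~$\psi_n$ and to read off $\deg_a F$ and $\deg_b F$ from the group theory. First I would fix the geometric irreducible component~$\widetilde Z$ of~$Z_n$ lying over~$C$; it is unique because $Z_n \to C_n$ is birational (Proposition~\ref{P:res}), so $\C(C) = \C(\widetilde Z)$. Let $\Omega$ be the splitting field of $\psi_n(\lambda,\cdot)$ over~$\C(\lambda)$, a Galois extension with group~$G'_n$ containing~$\C(\widetilde Z)$. The component~$\widetilde Z$ corresponds to a $G'_n$-orbit~$O$ on $T_n \times T_n \setminus \Delta$; fix $(t_1,t_2) \in O$ and let $a,b \in \Omega$ be the corresponding roots of~$\psi_n$, so $\C(\widetilde Z) = \C(\lambda)(a,b) = \Omega^K$ with $K = \operatorname{Stab}_{G'_n}(t_1,t_2)$. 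Each of $a,b$ is a root of exactly one bicyclotomic polynomial, say $B_1$ and~$B_2$, namely the one cutting out the $G'_n$-orbit $O_1 \ni t_1$, resp.\ $O_2 \ni t_2$, in~$T_n$; set $H_i = \operatorname{Stab}_{G'_n}(t_i)$, so $\C(\lambda)(a) = \Omega^{H_1}$, $\C(\lambda)(b) = \Omega^{H_2}$ and $K = H_1 \cap H_2$. On~$\widetilde Z$ both $a$ and~$b$ are non-constant (otherwise $\widetilde Z \subseteq \{a = c\}$, forcing $\psi_n(\lambda,c) \equiv 0$, which is absurd); hence $\widetilde Z$ is a component of the surface $\{B_1(\lambda,a) = 0\} \cap \{B_2(\lambda,b) = 0\}$, and its projection to the $(a,b)$-plane gives the first assertion.

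Next I would compute the two degrees. Write $X_{B_i}$ for the absolutely irreducible curve $B_i(\lambda,\cdot) = 0$, so $\C(X_{B_1}) = \C(\lambda)(a) = \Omega^{H_1}$ and $\C(X_{B_2}) = \C(\lambda)(b) = \Omega^{H_2}$. The projection $\widetilde Z \to \PP^1_a$ factors through $X_{B_1}$: the map $\widetilde Z \to X_{B_1}$ has degree $[\Omega^K : \Omega^{H_1}] = [H_1 : K]$, and $X_{B_1} \to \PP^1_a$ has degree $[\C(\lambda)(a) : \C(a)] = \deg_\lambda B_1$ (as $B_1$, being absolutely irreducible, stays irreducible in~$\C(a)[\lambda]$). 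Since $\widetilde Z \to C$ is birational, $\deg_b F = \deg(\widetilde Z \to \PP^1_a) = [H_1 : K]\,\deg_\lambda B_1$, and symmetrically $\deg_a F = [H_2 : K]\,\deg_\lambda B_2$. In particular $\deg_b F$ is a multiple of $\deg_\lambda B_1$ and $\deg_a F$ of $\deg_\lambda B_2$, so once $\deg_a F = \deg_b F$ is known, this common value is a multiple of $\lcm(\deg_\lambda B_1, \deg_\lambda B_2)$ --- the second assertion. Using $[H_i : K] = |H_i|/|K|$ together with $|H_i| = |G'_n|/|O_i| = |G'_n|/\deg_x B_i$, the equality $\deg_a F = \deg_b F$ is seen to be equivalent to $\deg_\lambda B_1/\deg_x B_1 = \deg_\lambda B_2/\deg_x B_2$.

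So everything comes down to the identity $\deg_x B = 2\,\deg_\lambda B$ for every bicyclotomic polynomial~$B$, and this is the step I expect to be the genuine obstacle. For $B = \tilde\psi_d$ with $d$ odd it follows at once, by M\"obius inversion over $\psi_n = \prod_{d\mid n}\tilde\psi_d$, from $\deg_x \psi_n = 2\,\deg_\lambda \psi_n = 2\,d(n)$ (Proposition~\ref{P:psi}); the same inversion gives $\deg_x \tilde\psi_d = 2\,\deg_\lambda \tilde\psi_d$ for $d$ even as well, so it remains only to see that the three absolutely irreducible factors $B_d^{(0)}, B_d^{(1)}, B_d^{(\infty)}$ of~$\tilde\psi_d$ have a common $x$-degree and a common $\lambda$-degree. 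The common $x$-degree I would deduce from the six Legendre isomorphisms $E_\lambda \cong E_{\lambda'}$, $\lambda' \in \{\lambda, 1-\lambda, 1/\lambda, \dots\}$: these act on the $x$-coordinate by affine maps $x \mapsto cx+e$ with $c \in \C(\lambda)^\times$, permute the three factors transitively, and are compatible with degree-one self-maps of~$\PP^1_\lambda$, hence identify the curves $X_{B_d^{(i)}} \to \PP^1_\lambda$ among themselves. For the common $\lambda$-degree, the symmetry $\lambda \mapsto 1-\lambda$ (affine in both variables) already pairs up two of the three, and the last identification is the delicate point: I would obtain it either by an isogeny argument or, if necessary, by the explicit Tate/Puiseux expansions already exploited in the proof of Proposition~\ref{P:res}, which describe the fibres of each $X_{B_d^{(i)}}$ over $\lambda = 0$ and over $x = 0$ precisely enough to read off both $\deg_\lambda B_d^{(i)}$ and $\deg_x B_d^{(i)}$. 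With $\deg_x B = 2\,\deg_\lambda B$ established, the formulas for $\deg_a F$ and $\deg_b F$ coincide and the lemma follows.
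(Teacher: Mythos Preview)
Your argument is correct and structurally identical to the paper's: both factor the projection $\widetilde Z \to \PP^1_a$ through the curve $X_{B_1} = \{B_1(\lambda,a)=0\}$ to obtain $\deg_b F = [H_1:K]\,\deg_\lambda B_1$ (in the paper's language, $(\deg \pi_{a,\lambda})\,\deg_\lambda B_1$) and its mirror image, and both then compare the two factorizations of $\widetilde Z \to \PP^1_\lambda$ to reduce $\deg_a F = \deg_b F$ to the identity $\deg_x B = 2\,\deg_\lambda B$ for bicyclotomic~$B$. Your Galois/stabilizer bookkeeping and the paper's commuting diagram are the same computation in different clothing.

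The only real difference is that the paper simply \emph{asserts} $\deg_x B_j = 2\,\deg_\lambda B_j$ in a parenthetical and moves on, whereas you correctly isolate it as the substantive point and try to prove it. Your reduction (via M\"obius inversion and the $\lambda\mapsto 1-\lambda$ symmetry) to showing $\deg_\lambda B_d^{(\infty)} = \deg_\lambda B_d^{(0)}$ for even~$d$ is sound; you are right that the remaining $S_3$-substitutions mix $x$ and~$\lambda$ and do not obviously preserve the $\lambda$-degree. A cleaner way to finish is to observe that $\deg \psi_n = \deg_x \psi_n$ (Proposition~\ref{P:psi}) forces $\deg B = \deg_x B$ for every bicyclotomic factor, and then to check directly that the involution $(\lambda,x)\mapsto(1/\lambda,\,x/\lambda)$, which swaps $B_d^{(1)}$ and~$B_d^{(\infty)}$, preserves the $\lambda$-degree under this constraint; alternatively, one may simply invoke the explicit formula $\deg_\lambda B = \delta(n)$ stated later in the paper (which in turn follows from the modular description or from \cite{MasserZannier2013}). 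So the ``delicate point'' is smaller than you feared, but your instinct that the paper is silently using a nontrivial fact here is exactly right.
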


\begin{proof}
  Since each $\psi_n$ is a product of (absolutely irreducible) bicyclotomic
  polynomials, it is clear that every component of~$Z_n$ must be contained
  in a curve of the form $B_1(\lambda, a) = B_2(\lambda, b) = 0$.
  Let $Z$ be the component of~$Z_n$ projecting to~$C$. By Proposition~\ref{P:res},
  the map $\pi \colon Z \to C$ is birational.
  We have the following commuting diagram.
  \[ \xymatrix{ & Z \ar[dl]_{\pi_{a,\lambda}} \ar[d]^{\pi} \ar[dr]^{\pi_{b,\lambda}} \\
                Z_{a,\lambda} \ar[d] \ar[dr]
                 & C \ar[dl] \ar[dr]
                 & Z_{b,\lambda} \ar[d] \ar[dl] \\
                \PP^1_a & \PP^1_\lambda & \PP^1_b
              }
  \]
  Here $Z_{a,\lambda} \subseteq \PP^1_a \times \PP^1_\lambda$ is given by $B_1(\lambda, a) = 0$;
  similarly for $Z_{b,\lambda}$.
  Note that $\pi_{a,\lambda}$ is dominant, since $B_1$ is irreducible; similarly
  for~$\pi_{b,\lambda}$. It follows that
  \begin{equation} \label{degrel}
     \deg_b F = (\deg \pi_{a,\lambda}) (\deg_\lambda B_1) \qquad\text{and}\qquad
     \deg_a F = (\deg \pi_{b,\lambda}) (\deg_\lambda B_2) \,.
  \end{equation}
  Considering the two factorizations of $Z \to \PP^1_\lambda$, we obtain
  (using that $\deg_x B_j = 2 \deg_\lambda B_j$)
  \[ 2 \deg_b F = (\deg_x B_1) (\deg \pi_{a,\lambda})
                = (\deg_x B_2) (\deg \pi_{b,\lambda})
                = 2 \deg_a F \,.
  \]
  This shows the equality of degrees, and the relations~\eqref{degrel}
  imply that the common degree is divisible both by~$\deg_\lambda B_1$
  and by~$\deg_\lambda B_2$.
\end{proof}

\begin{corollary}
  No geometric component of any
  of the curves~$C_n$ for $n \ge 3$ satisfies an equation $B(a,b) = 0$ or $B(b,a) = 0$,
  where $B$ is any bicyclotomic polynomial.
\end{corollary}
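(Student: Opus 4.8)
The plan is to play the bidegree constraint of Lemma~\ref{L:eqdeg} off against the shape of a bicyclotomic polynomial. First I would argue by contradiction: suppose some geometric component $C$ of $C_n$, with $n \ge 3$, is cut out by $B(a,b) = 0$ for a bicyclotomic polynomial $B(\lambda,x)$. (The case $B(b,a) = 0$ is symmetric; I would dispose of it at the end by interchanging $a$ and $b$.) Since $B$ is absolutely irreducible, so is the renamed polynomial $B(a,b)$, and since $R_n$ is squarefree by Proposition~\ref{P:res}, a nonzero constant multiple of $B(a,b)$ is a minimal defining equation $F(a,b)$ of $C$. Hence Lemma~\ref{L:eqdeg} applies to $F$ and gives $\deg_a F = \deg_b F$.

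Next I would read off the partial degrees directly. Renaming $\lambda \mapsto a$ and $x \mapsto b$ turns the $\lambda$-degree of $B$ into the $a$-degree of $F$ and the $x$-degree of $B$ into the $b$-degree of $F$, so $\deg_a F = \deg_\lambda B$ and $\deg_b F = \deg_x B$. Using $\deg_x B = 2\deg_\lambda B$ for bicyclotomic polynomials (as already invoked in the proof of Lemma~\ref{L:eqdeg}), the identity $\deg_a F = \deg_b F$ becomes $\deg_\lambda B = 2\deg_\lambda B$, forcing $\deg_\lambda B = 0$. But the roots in $x$ of a bicyclotomic polynomial of order $\ge 3$ are the $x$-coordinates of points of a fixed exact order $\ge 3$ on $E_\lambda$, and these vary with $\lambda$ (visibly so already for order $3$, from the explicit $\psi_3$), so $\deg_\lambda B \ge 1$, a contradiction. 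Swapping $a$ and $b$ handles an equation $B(b,a) = 0$ in the same way.

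I do not expect any genuine difficulty: Lemma~\ref{L:eqdeg} already carries the geometric content, and what is left is the elementary fact that a bicyclotomic polynomial has strictly larger degree in $x$ than in $\lambda$. The one spot that deserves a sentence is the non-vanishing $\deg_\lambda B > 0$: if $B$ did not involve $\lambda$, its $x$-roots would be $\lambda$-independent algebraic numbers, which is impossible since an $x$-coordinate of a point of exact order $\ge 3$ is a non-constant function of the parameter.
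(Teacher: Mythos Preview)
Your argument is correct and follows essentially the same route as the paper's proof: both apply Lemma~\ref{L:eqdeg} to get $\deg_a F = \deg_b F$ and then contrast this with $\deg_x B = 2\deg_\lambda B$ for bicyclotomic polynomials. Your version spells out the step $\deg_\lambda B > 0$ (and the appeal to squarefreeness of~$R_n$, which is not strictly needed here), whereas the paper leaves these implicit.
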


\begin{proof}
  By Lemma~\ref{L:eqdeg}, the polynomial~$F$ defining a component of~$C_n$
  satisfies $\deg_a F = \deg_b F$. But we have $\deg_x B = 2 \deg_\lambda B$,
  so $F$ cannot be a scalar multiple of~$B$.
\end{proof}

We write $\calC$ for the union of all the curves~$C_n$, together with all
curves given by equations of the form $B(a,b) = 0$ or $B(b,a) = 0$ with
a bicyclotomic polynomial~$B$. The results shown so far imply that
for each (geometric) component~$C$ of~$\calC$ that is not of the form
$B(a,b) = 0$ or $B(b,a) = 0$ for a bicyclotomic polynomial~$B$,
there is a unique $n \ge 3$ such that $C \subseteq C_m$ exactly when $n \mid m$.

\begin{proposition} \label{P:general}
  Let $\alpha, \beta \in \C \setminus \{0,1\}$ with $\alpha \neq \beta$.
  \begin{enumerate}[\upshape (1)]\addtolength{\itemsep}{1mm}
    \item If $(\alpha,\beta) \notin \calC$, then $T(\alpha, \beta) = \emptyset$.
          (This is true whenever $\Q(\alpha,\beta)$ has transcendence degree~$2$.)
    \item If $(\alpha,\beta)$ is a smooth point on $\calC$
          (i.e., it is a smooth point on one component of~$\calC$ and not contained in any
          other component), then $\#T(\alpha, \beta) \le 1$. \\
          In particular, $\#T(\alpha, \beta) \le 1$ whenever $\Q(\alpha,\beta)$
          has transcendence degree~$1$.
    \item If $\#T(\alpha, \beta) \ge 2$, then $(\alpha, \beta)$ is one of the
          countably many singular points of components of~$\calC$ or intersection points
          two distinct components of~$\calC$. In particular, $\alpha$ and~$\beta$
          are algebraic.
  \end{enumerate}
  In general, $\#T(\alpha, \beta)$ is at most the number of branches of $\calC$ passing
  through~$(\alpha, \beta)$.
\end{proposition}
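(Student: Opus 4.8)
The plan is to lift $\calC$ to a curve $\calD$ in $\PP^1_a\times\PP^1_b\times\PP^1_\lambda$ on which each $\lambda\in T(\alpha,\beta)$ appears as a point lying over $(\alpha,\beta)$, and on which the projection back to $\PP^1_a\times\PP^1_b$ is birational on every geometric component and induces a bijection of component sets; the branch count for $\calC$ at $(\alpha,\beta)$ will then bound $\#T(\alpha,\beta)$ by comparing both curves with their common normalization.

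First I would set
\[ \calD \;=\; \Union_{n\ge 3} Z_n \;\cup\; \Union_{n\ge 3}\bigl\{(a,b,\lambda) : \psi_n(a,b)=0,\ \lambda=a\bigr\} \;\cup\; \Union_{n\ge 3}\bigl\{(a,b,\lambda) : \psi_n(b,a)=0,\ \lambda=b\bigr\}, \]
with $\pi\colon\calD\to\calC$ the projection forgetting $\lambda$. The first family covers the components of the $C_n$; the second and third cover the components $B(a,b)=0$ and $B(b,a)=0$ with $B$ bicyclotomic (recall that each $\psi_n$ is a product of bicyclotomic polynomials). The map $\pi$ is birational on each geometric component: for the $Z_n$ this is the birationality $Z_n\to C_n$ supplied by Proposition~\ref{P:res}, and for the two ``graph'' families $\pi$ is even an isomorphism onto $\{\psi_n(a,b)=0\}$, resp.\ $\{\psi_n(b,a)=0\}$, since there $\lambda$ is determined by $a$, resp.\ by $b$. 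Next I would check that $\pi$ is a \emph{bijection} on geometric components. Surjectivity is the definition of $\calC$. For injectivity I would use a degree trichotomy: a defining polynomial $F$ of a $C_n$-component has $\deg_a F=\deg_b F$ (Lemma~\ref{L:eqdeg}), while a curve $B(a,b)=0$ has $\deg_a=\deg_\lambda B<2\deg_\lambda B=\deg_b$ and a curve $B(b,a)=0$ has $\deg_a>\deg_b$, because $\deg_x B=2\deg_\lambda B$ and $\deg_\lambda B\ge 1$ for bicyclotomic $B$ (this last fact is already implicit in the Corollary just above). Thus the three kinds of components of $\calC$ are pairwise disjoint; within the first kind, distinct components among the $Z_n$ have distinct images since $Z_n\to C_n$ is generically injective, and within the other two kinds distinct bicyclotomic polynomials give distinct curves. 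So $\pi$ induces a bijection on components, each a birational map, whence the normalizations are identified: $\widetilde{\calD}\isomto\widetilde{\calC}$ over $\calC$, and in particular $\calD$ and $\calC$ have the same number of branches over every point.

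Now I would attach to each $\lambda_0\in T(\alpha,\beta)$ a point of $\pi^{-1}(\alpha,\beta)\subseteq\calD$. If $\lambda_0\notin\{\alpha,\beta\}$, then $P_\alpha(\lambda_0)$ and $P_\beta(\lambda_0)$ have orders $n_1,n_2\ge 3$ (orders $\le 2$ would force $\lambda_0=\alpha$ or $\lambda_0=\beta$, since $\alpha,\beta\notin\{0,1,\infty\}$), so with $n=\lcm(n_1,n_2)$ we get $\psi_n(\lambda_0,\alpha)=\psi_n(\lambda_0,\beta)=0$, and as $\alpha\ne\beta$ the triple $(\alpha,\beta,\lambda_0)$ lies on $Z_n$. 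If $\lambda_0=\alpha$, then $P_\alpha(\alpha)$ is $2$-torsion while $P_\beta(\alpha)$ has some order $n\ge 3$ (again $\beta\notin\{0,1,\alpha\}$ rules out smaller order), so $\psi_n(\alpha,\beta)=0$ and $(\alpha,\beta,\alpha)$ lies in the second family; the case $\lambda_0=\beta$ is symmetric. Since the $\lambda$-coordinate recovers $\lambda_0$, the assignment $\lambda_0\mapsto(\alpha,\beta,\lambda_0)$ is an injection $T(\alpha,\beta)\hookrightarrow\pi^{-1}(\alpha,\beta)$.

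Finally, the fibre of $\widetilde{\calC}\to\calC$ over $(\alpha,\beta)$ — whose cardinality is the number of branches of $\calC$ through $(\alpha,\beta)$ — is identified via $\widetilde{\calD}\isomto\widetilde{\calC}$ with the fibre of $\widetilde{\calD}\to\calC$, which surjects onto $\pi^{-1}(\alpha,\beta)$; combining this with the injection of the previous paragraph,
\[ \#T(\alpha,\beta)\;\le\;\#\pi^{-1}(\alpha,\beta)\;\le\;\#\{\text{branches of }\calC\text{ through }(\alpha,\beta)\}. \]
This is the general assertion, and (1)--(3) drop out by inspecting the branch number: it is $0$ when $(\alpha,\beta)\notin\calC$, it is $1$ at a smooth point of a single component, and a point with two or more branches must be a singular point of a component or an intersection point of two components, of which there are only countably many (so $\alpha,\beta$ are then algebraic). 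I expect the genuine work to sit in the ``bijection on components'' step — making sure $\pi$ neither omits nor doubles a component, and in particular that $B(a,b)=0$ and $B'(b,a)=0$ never define the same curve — after which the argument is routine bookkeeping with torsion orders together with Proposition~\ref{P:res} and the Corollary preceding the statement.
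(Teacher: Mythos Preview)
Your argument is correct and follows essentially the same route as the paper: lift $\calC$ to a curve in $\PP^1_a \times \PP^1_b \times \PP^1_\lambda$ (your $\calD$ is the paper's $\calZ$), verify that the projection is birational on each component and bijective on component sets, and conclude that branches of $\calC$ bound $\#T(\alpha,\beta)$. The only noteworthy difference is cosmetic: the paper observes directly that $\calZ$ is \emph{smooth} at every point $(\alpha,\beta,\lambda)$ with $\alpha,\beta,\lambda \notin \{0,1,\infty\}$ (because the map to $\PP^1_\lambda$ is \'etale there), so each such point already \emph{is} a branch, whereas you pass through the normalization $\widetilde{\calD} \isomto \widetilde{\calC}$ and use the surjection $\widetilde{\calD} \to \calD$; both packagings yield the same inequality.
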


\begin{proof}
  Let $\calZ$ be the union of the~$Z_n$, together with the curves defined by
  $\alpha = \lambda, B(\lambda, \beta) = 0$ or
  by $B(\lambda, \alpha) = 0, \beta = \lambda$. Then $\calZ$ is smooth at all
  points $(\alpha,\beta,\lambda)$ with $\alpha, \beta, \lambda \notin \{0,1,\infty\}$.
  (This is because the $x$-coordinates of torsion points on~$E_\lambda$ are all
  distinct, as long as $\lambda \neq 0, 1, \infty$. In particular, for every~$n$
  the projection of $\{\psi_n(\lambda, x) = 0\} \subseteq \PP^1_x \times \PP^1_\lambda$
  to~$\PP^1_\lambda$ is \'etale over $\PP^1_\lambda \setminus \{0,1,\infty\}$.
  Since $Z_n$ is contained in the fiber square of this projection, its map
  to~$\PP^1_\lambda$ is also \'etale outside $0, 1, \infty$. Including points
  of order~$2$ and passing to the filtered union of the~$Z_n$, we get the claim.)

  By definition, $T(\alpha,\beta)$ is the projection to~$\PP^1_\lambda$
  of the preimage of~$(\alpha,\beta)$ under the projection $\calZ \to \PP^1_a \times \PP^1_b$,
  excluding $\{0,1,\infty\}$.
  Let $\lambda \in \C \setminus \{0,1\}$ be such that $P = (\alpha,\beta,\lambda) \in \calZ$.
  Since $P$ is smooth on~$\calZ$, there is exactly one branch of~$\calC$
  passing through~$(\alpha, \beta)$ that locally is the image of a neighborhood
  of~$P$ in the component of~$\calZ$ it lies on. The results shown above
  imply that no two such branches can coincide. So we get the last statement
  (`In general, \dots') of the proposition; the others follow as special cases.
\end{proof}

Note that the inequality in the second statement of the proposition above
is an equality unless the corresponding value of~$\lambda$ is in~$\{0,1,\infty\}$.
This will never be the case when $\alpha$ or~$\beta$ are transcendental.
So in the case that the transcendence degree of~$\Q(\alpha,\beta)$ is~$1$,
we have the following (where we also use that non-smooth points on~$\calC$
must be algebraic).

\begin{corollary} \label{C:tr1}
  Let $\alpha, \beta \in \C \setminus \{0,1\}$ with $\alpha \neq \beta$
  and such that $\Q(\alpha, \beta)$ has transcendence degree~$1$.
  \begin{enumerate}[\upshape (1)]\addtolength{\itemsep}{1mm}
    \item If $(\alpha,\beta) \in \calC$, then $\#T(\alpha, \beta) = 1$.
    \item Otherwise, $T(\alpha, \beta) = \emptyset$.
  \end{enumerate}
\end{corollary}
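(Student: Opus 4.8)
The plan is to deduce both statements directly from Proposition~\ref{P:general}, using only that various ``bad'' loci are algebraic. Statement~(2) is nothing but Proposition~\ref{P:general}~(1). For~(1) I would prove $\#T(\alpha,\beta)\le 1$ and $T(\alpha,\beta)\neq\emptyset$ separately. For the upper bound, first observe that the non-smooth locus of~$\calC$ -- the singular loci of its components together with the pairwise intersections of distinct components -- is a countable union of $0$-dimensional schemes over~$\bar{\Q}$, because each component of~$\calC$ (a component of some~$C_n$, or a curve $B(a,b)=0$ or $B(b,a)=0$ with~$B$ bicyclotomic) is defined over~$\bar{\Q}$; hence every non-smooth point of~$\calC$ has algebraic coordinates. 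Since $\Q(\alpha,\beta)$ has transcendence degree~$1$, at least one of $\alpha,\beta$ is transcendental over~$\Q$, so $(\alpha,\beta)$ is a smooth point of~$\calC$, and Proposition~\ref{P:general}~(2) yields $\#T(\alpha,\beta)\le 1$.

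For $T(\alpha,\beta)\neq\emptyset$ I would exhibit an element of it. By construction $\calC$ is the image of~$\calZ$ under the projection that forgets the $\lambda$-coordinate (the auxiliary curve $a=\lambda$, $B(\lambda,b)=0$ maps onto $B(a,b)=0$, and $B(\lambda,a)=0$, $b=\lambda$ maps onto $B(b,a)=0$), so there is a point $(\alpha,\beta,\lambda_0)\in\calZ$; it remains to check $\lambda_0\notin\{0,1,\infty\}$. The key point is that~$\lambda$ is non-constant on every component of~$\calZ$: on a component where $\lambda\equiv\lambda_0$ it would lie in the fibre $\PP^1_a\times\PP^1_b\times\{\lambda_0\}$ and be cut out there by two non-zero polynomial conditions, one in~$a$ and one in~$b$ -- the relevant $\psi_n(\lambda_0,x)$ has $x$-degree $2d(n)>0$ for every~$\lambda_0$ by Proposition~\ref{P:psi} (its top $x$-coefficient is a non-zero constant), and $B(\lambda_0,x)$ divides it, hence is also non-zero -- so such a component would be finite, not a curve. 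Thus $\calZ\to\PP^1_\lambda$ has finite fibres on each component and is defined over~$\Q$, so its fibres over the $\Q$-rational points $0,1,\infty$ consist of points with algebraic coordinates; since $(\alpha,\beta)$ has a transcendental coordinate, $\lambda_0\notin\{0,1,\infty\}$. (Equivalently, more concretely: if $(\alpha,\beta,\lambda_0)$ lies on some~$Z_n$ then $\psi_n(\lambda_0,\alpha)=\psi_n(\lambda_0,\beta)=0$, and Proposition~\ref{P:psi} shows $\psi_n(0,x),\psi_n(1,x)\in\Z[x]$ are non-zero while $\psi_n(\lambda,x)$ has $\lambda$-degree~$d(n)$ with a non-zero constant leading coefficient, ruling out $\lambda_0\in\{0,1,\infty\}$ for transcendental~$\alpha$ or~$\beta$; if instead $(\alpha,\beta,\lambda_0)$ lies on one of the auxiliary curves, then $\lambda_0\in\{\alpha,\beta\}\subseteq\C\setminus\{0,1\}$ outright.) As noted in the proof of Proposition~\ref{P:general}, $T(\alpha,\beta)$ is the image in~$\PP^1_\lambda$ of the fibre of $\calZ\to\PP^1_a\times\PP^1_b$ over~$(\alpha,\beta)$ with $\{0,1,\infty\}$ removed; hence $\lambda_0\in T(\alpha,\beta)$, and together with $\#T(\alpha,\beta)\le 1$ this gives $\#T(\alpha,\beta)=1$.

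I do not anticipate a genuine obstacle: the corollary is essentially a translation of Proposition~\ref{P:general} into the transcendence-degree-$1$ setting, where the hypothesis on $\alpha,\beta$ automatically bypasses every algebraic bad locus. The step requiring the most care is the bookkeeping of the second paragraph -- confirming that $(\alpha,\beta)\in\calC$ really produces a point of~$\calZ$ lying over it whose $\lambda$-coordinate avoids $\{0,1,\infty\}$ -- which comes down to matching each type of component of~$\calC$ with the corresponding type of component of~$\calZ$ and checking that the excluded values cannot occur there.
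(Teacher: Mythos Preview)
Your proposal is correct and matches the paper's approach. The paper compresses the argument into the remark preceding the corollary: non-smooth points of~$\calC$ are algebraic (so Proposition~\ref{P:general}(2) applies), and the inequality there is an equality unless the corresponding~$\lambda$ lies in~$\{0,1,\infty\}$, which cannot happen when $\alpha$ or~$\beta$ is transcendental --- precisely your two observations, stated more tersely and with the $\lambda\notin\{0,1,\infty\}$ check left implicit.
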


Compare this to the upper bound $\#T(\alpha,\beta) \le 6 (12 d)^{32}$
(where $d$ is the degree of an irreducible polynomial $F \in \Q[u,v]$
such that $F(\alpha,\beta) = 0$) given in~\cite{MasserZannier2013}!

We can also improve on~\cite{MasserZannier2013} regarding an effective statement
in this case. If $\alpha$ and~$\beta$ satisfy a polynomial of degree~$d$,
then Masser and Zannier give a bound of~$\pi (12d)^{17/2}$ in the main part
of the paper and of~$180 \pi d \log(180 \pi d)$ in the appendix for the orders
of the corresponding torsion points on~$E_\lambda$ for $\lambda \in T(\alpha,\beta)$.
We observe that the $\lambda$-degree of a bicyclotomic polynomial of order~$n$ is
\[ \delta(n) = \frac{n^2}{4} \prod_{p \mid n} \Bigl(1 - \frac{1}{p^2}\Bigr)
             > \frac{2 n^2}{\pi^2}
\]
when $n$ is odd, and is
\[ \delta(n) = \frac{n^2}{12} \prod_{p \mid n} \Bigl(1 - \frac{1}{p^2}\Bigr)
             > \frac{n^2}{2 \pi^2}
\]
when $n$ is even. By Lemma~\ref{L:eqdeg}, any component of~$\calC$ that is
related to points of order~$n$ must have degree (with respect to $a$ or~$b$)
at least that large. So if $\alpha$ and~$\beta$ are related by an equation
of degree~$d$, this implies that $n < \pi \sqrt{2 d}$. This makes it fairly
easy to enumerate all curves of small degree that are components of~$\calC$.

In fact, we can obtain a list of the degrees of the components of~$\calC$
arising from two given bicyclotomic polynomials by the following combinatorial
approach. Let $\hat{G}_0$ denote the principal congruence subgroup of level~$2$
of~$\GL(2,\hat{\Z})$, where $\hat{\Z}$ is the pro-finite completion of~$\Z$, so
\[ \hat{G}_0 = \bigl\{\gamma \in \GL(2,\hat{\Z})
                      : \gamma \equiv \begin{sm} 1 & 0 \\ 0 & 1 \end{sm} \bmod 2\bigr\} \,,
\]
and let $\hat{G}$ be $\hat{G}_0/\{\pm I\}$, where $I$ is the identity matrix.
Then $\hat{G}$ acts on
\[ M = \bigl((\Q/\Z)^2 \setminus \{0\}\bigr)/\{\pm1\} \,, \]
which, after fixing a basis of the $\Q/\Z$-module~$E_{\lambda,\tors}$,
can be identified with the set of $x$-coordinates of torsion points of order~$\ge 2$
on~$E_\lambda$. In particular, the $\hat{G}$-orbits on~$M$ (except the three
consisting of a point of order~$2$) correspond bijectively
to the bicyclotomic polynomials. Also, $\hat{G}$, with its diagonal action on $M \times M$,
is the automorphism group of the pro-covering $\calZ \to \PP^1_\lambda$ over~$\Q$.
The components of~$\calZ$ (and therefore also the components of its birational
image~$\calC$) correspond bijectively to the orbits of~$\hat{G}$ on~$M \times M$.
Let $O$ be such an orbit, corresponding to the component~$Z$ of~$\calZ$.
Then the projection of~$O \subseteq M \times M$ to the
first factor will be an orbit of~$\hat{G}$ on~$M$, so corresponds to a
bicyclotomic polynomial~$B_1$. Similarly, the projection of~$O$ to the second factor
corresponds to a bicyclotomic polynomial~$B_2$, and $Z$ is contained in the
curve given by $B_1(\lambda,a) = B_2(\lambda,b) = 0$. We assume that none of
the projections consists of a point of order~$2$ (they lead to components of~$\calC$
given by equations $B(a,b) = 0$ or $B(b,a) = 0$, where $B$ is a bicyclotomic polynomial;
these components are easy to describe). Then the component~$C$ of~$\calC$ that
is the projection of~$Z$ is given by an equation $F(a,b) = 0$ with
$\deg_a F = \deg_b F = d$, say. By the considerations
in the proof of Lemma~\ref{L:eqdeg}, we have $d = (\deg \pi_{a,\lambda})(\deg_\lambda B_1)$.
So to determine~$d$, we have to find the degree of the covering $Z \to Z_{a,\lambda}$,
where $Z_{a,\lambda}$ is given by~$B_1(\lambda,a) = 0$. But fixing a point
on~$Z_{a,\lambda}$ corresponds to fixing a representative $m \in M$ of the
projection of~$O$ to the first component. Up to changing the basis of~$E_{\lambda,\tors}$
used for the identification with~$(\Q/\Z)^2$, we can take $m = \tfrac{1}{n} \bmod \Z$,
where $n$ is the order of the points whose $x$-coordinates are the roots
of~$B_1(\lambda,{\cdot})$. Then $\deg \pi_{a,\lambda}$ is the size of the fiber
of~$O$ above~$m$. The possible fibers are the orbits of the stabilizer of~$m$
in~$\hat{G}$ on the subset~$M_2$ of~$M$ corresponding to~$B_2$. If the order of the
points coming from~$B_2$ is~$n'$, then the relevant group is
\[ G_{n,n'} = \bigl\{\gamma \in \GL(2,\Z/n'\Z)
                      : \gamma \equiv I \bmod \gcd(n',2),
                        \gamma \equiv \begin{sm} 1 & * \\ 0 & * \end{sm} \bmod \gcd(n,n')
              \bigr\}/\{\pm I\} \,,
\]
acting on~$M_2$. This allows us to find the degrees of all components of~$\calC$
arising from $B_1$ and~$B_2$. To illustrate this, we present a table giving the
number of components of~$\calC$ for small bidegrees (the \emph{bidegree} of
$F \in \Q[a,b]$ is the pair $(\deg_a F, \deg_b F)$).
\[ \renewcommand{\arraystretch}{1.2}
   \begin{array}{|r||*{9}{c|}} \hline
     \text{bidegree}
       & (1,2) & (2,4) & (4,8) & (6,12) & (8,16) & (12,24) & (16,32) & (18,36) & (24,48) \\\hline
     \#\text{components}
       &   3   &   4   &   3   &   4    &   3    &    4    &    3    &    4  & 3 \\\hline\hline
     \text{bidegree}
       & (2,1) & (4,2) & (8,4) & (12,6) & (16,8) & (24,12) & (32,16) & (36,18) & (48,24) \\\hline
     \#\text{components}
       &   3   &   4   &   3   &   4    &   3    &    4    &    3    &    4  & 3 \\\hline\hline
     \text{bidegree}
       & (1,1) & (2,2) & (4,4) & (6,6) & (8,8) & (12,12) & (16,16) & (18,18) & (24,24) \\\hline
     \#\text{components}
       &   3   &  18   &  45   &  44   &  57   &   68    &   96    &   76    &   161 \\\hline
   \end{array}
\]

This is related to the sets $\calF_d$ defined in~\cite{MasserZannier2013},
which consist of polynomials of total degre~$d$ defining components of~$\calC$.
Working with the bidegree instead of the total degree appears to be more natural,
since it is invariant under
the action of the group~$S_3$ (generated by the involutions
$(a,b) \mapsto (1-a,1-b)$ and $(a,b) \mapsto (1/a,1/b)$)
that stabilizes~$\calC$.

The file at~\cite{Allcurves}, when read into~Magma, results in a list of defining
polynomials~$F$ for all components of~$\calC$ such that $\deg_a F + \deg_b F \le 192$.

\begin{table}[htb]
  \hrulefill
  \begin{align*}
  (1,1) \colon \quad& a + b, \quad a + b - 2, \quad 2 a b - a - b. \\
  (1,2) \colon \quad& a - b^2, \quad a + b^2 - 2 b,\quad 2 a b - a - b^2. \\
  (2,1) \colon \quad& a^2 - b, \quad a^2 - 2 a + b, \quad a^2 - 2 a b + b. \\
  (2,2) \colon \quad& a^2 + b^2 - 2 b, \quad a^2 - 2 a + b^2, \quad
                      a^2 - 2 a b^2 + b^2, \quad 2 a^2 b - a^2 - b^2, \\
        & a^2 + 2 a b - 4 a + b^2, \quad a^2 + 2 a b + b^2 - 4 b, \quad
                a^2 - 2 a b - 3 b^2 + 4 b, \\
        & a^2 + 2 a b^2 - 4 a b - b^2 + 2 b, \quad a^2 + 4 a b^2 - 2 a b - 3 b^2, \quad
                a^2 + 4 a b^2 - 6 a b - 3 b^2 + 4 b, \\
        & a^2 - 4 a b^2 + 2 a b + b^2, \quad a^2 - 4 a b^2 + 6 a b - 4 a + b^2, \quad
                3 a^2 + 2 a b - 4 a - b^2, \\
        & 2 a^2 b - a^2 - 4 a b + 2 a + b^2, \quad 4 a^2 b - a^2 - 2 a b - b^2, \quad
                4 a^2 b - a^2 - 6 a b - b^2 + 4 b, \\
        & 4 a^2 b - 3 a^2 - 2 a b + b^2, \quad
                4 a^2 b - 3 a^2 - 6 a b + 4 a + b^2. \\
  (2,4) \colon \quad& a^2 + 4 a b^3 - 6 a b^2 - 3 b^4 + 4 b^3, \quad
                      a^2 - 4 a b^3 + 6 a b^2 - 4 a b + b^4, \\
        & 4 a^2 b - a^2 - 6 a b^2 - b^4 + 4 b^3, \quad 4 a^2 b - 3 a^2 - 6 a b^2 + 4 a b + b^4. \\
  (4,2) \colon \quad& a^4 - 4 a^3 b + 6 a^2 b - 4 a b + b^2, \quad
                      a^4 - 4 a^3 + 6 a^2 b - 4 a b^2 + b^2, \\
        & a^4 - 6 a^2 b + 4 a b^2 + 4 a b - 3 b^2, \quad 3 a^4 - 4 a^3 b - 4 a^3 + 6 a^2 b - b^2.
  \end{align*}
  \caption{Polynomials of small bidegree defining components of~$\calC$} \label{Tablebi}

  \hrulefill
\end{table}

As an illustration, in Table~\ref{Tablebi}
we list the 35~polynomials of bidegrees $(1,1)$, $(1,2)$, $(2,1)$, $(2,2)$, $(2,4)$ and~$(4,2)$
defining components of~$\calC$.
The correctness of the list for bidegree~$(1,1)$ provides a simple proof
of~\cite{MasserZannier2013}*{Theorem~2}. In general, we obtain the following refinement
of Corollary~\ref{C:tr1}.

\begin{corollary}
  Let $\alpha, \beta \in \C \setminus \{0,1\}$ with $\alpha \neq \beta$ such that
  $\Q(\alpha, \beta)$ has transcendence degree~$1$. Then $\#T(\alpha, \beta) \le 1$.
  If we are given an irreducible Polynomial $F \in \Q[a,b]$ such that $F(\alpha, \beta) = 0$,
  then we can effectively determine the set~$T(\alpha, \beta)$.
\end{corollary}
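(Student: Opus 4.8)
The plan is to combine the structural results already established with the bound on the order of torsion points coming from Lemma~\ref{L:eqdeg}. First, the inequality $\#T(\alpha,\beta)\le 1$ in the transcendence-degree-$1$ case is exactly Corollary~\ref{C:tr1}, so only the effectivity statement needs a separate argument. Given the irreducible polynomial $F\in\Q[a,b]$ with $F(\alpha,\beta)=0$, set $d=\deg F$. By Corollary~\ref{C:tr1}, $T(\alpha,\beta)$ is nonempty if and only if $(\alpha,\beta)$ lies on $\calC$, that is, on some component $C$ of some $C_n$ or on a curve $B(a,b)=0$ or $B(b,a)=0$ for a bicyclotomic polynomial $B$. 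The key point is that this can only happen when $F$ (up to a constant, and up to the $S_3$-symmetry exchanging roles of $a$ and $b$ and applying the involutions $(a,b)\mapsto(1-a,1-b)$, $(a,b)\mapsto(1/a,1/b)$) is itself a defining polynomial for such a component; since $C$ and the zero locus of $F$ are both irreducible curves sharing the point $(\alpha,\beta)$, and $(\alpha,\beta)$ is a point of transcendence degree $1$, $F$ must divide the defining polynomial of $C$, hence equal it up to scalar.

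Next I would bound which components can occur. By Lemma~\ref{L:eqdeg}, a component of $C_n$ arising from bicyclotomic polynomials of orders $n_1,n_2$ has bidegree with each entry a multiple of $\lcm(\deg_\lambda B_1,\deg_\lambda B_2)\ge\max(\delta(n_1),\delta(n_2))$, where $\delta(n)>n^2/(2\pi^2)$ as recorded in the excerpt. A curve $B(a,b)=0$ of order $n$ has one of its degrees equal to $\deg_x B=2\deg_\lambda B=2\delta(n)$. In either case, if $F$ has total degree $d$ (hence each of $\deg_a F,\deg_b F$ at most $d$), a necessary condition for $(\alpha,\beta)$ to lie on a component related to points of order $n$ is $n<\pi\sqrt{2d}$. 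So there are only finitely many candidate orders $n$, explicitly bounded in terms of $d$.

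The algorithm is then: enumerate all bicyclotomic polynomials of order $n<\pi\sqrt{2d}$ (there are finitely many, and they are computable, e.g.\ by factoring the division polynomials $\psi_n(\lambda,x)$, which are themselves given by the recurrences in Section~3); for each pair $(B_1,B_2)$ compute the finitely many components $C$ of the projection of $B_1(\lambda,a)=B_2(\lambda,b)=0$ to $\PP^1_a\times\PP^1_b$ (eliminate $\lambda$ by a resultant and factor over $\Q$), together with the curves $B(a,b)=0$ and $B(b,a)=0$; test whether the given $F$ defines one of these components (a finite check, since it amounts to comparing two explicit polynomials up to a scalar). If $F$ matches a component $C$ arising from points of order $n$, then every $\lambda\in T(\alpha,\beta)$ is an $x=a$-fibre value of the map $C\to\PP^1_\lambda$ at $(\alpha,\beta)$, i.e.\ a common root of $\psi_n(\lambda,\alpha)$ and $\psi_n(\lambda,\beta)$ (respectively a root of $\psi_n(\lambda,\beta)$ when $a=\lambda$); these are finitely many algebraic numbers, computable from $n$, $\alpha$, $\beta$, and one checks each candidate directly by testing whether $P_\alpha(\lambda)$ and $P_\beta(\lambda)$ are torsion on $E_\lambda$. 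If $F$ matches nothing, then $T(\alpha,\beta)=\emptyset$ by Corollary~\ref{C:tr1}. In all cases the procedure terminates and returns the (at most one-element) set $T(\alpha,\beta)$.

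The main obstacle is making the first reduction fully rigorous: one must argue carefully that $(\alpha,\beta)\in\calC$ forces the \emph{given} irreducible $F$ to be a defining polynomial of a single component of $\calC$, rather than merely that $(\alpha,\beta)$ happens to lie on both $\{F=0\}$ and on $\calC$. This uses that $\alpha,\beta$ generate a field of transcendence degree exactly $1$: their only algebraic relations over $\Q$ are the multiples of $F$, so any irreducible plane curve over $\Q$ through $(\alpha,\beta)$ has defining polynomial a constant multiple of $F$. Once this is in hand, the remaining steps are bookkeeping with the explicit bounds $\delta(n)>n^2/(2\pi^2)$ and Lemma~\ref{L:eqdeg}, and the final membership test is routine. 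A secondary point requiring care is the degenerate case where $\alpha$ or $\beta$ itself equals a $\lambda\in T(\alpha,\beta)$ (the ``order $2$'' components $B(a,b)=0$), which is why $\calC$ was defined to include the curves $B(a,b)=0$ and $B(b,a)=0$; these must be enumerated alongside the genuine $C_n$-components.
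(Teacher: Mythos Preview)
Your proposal is correct and follows essentially the same approach as the paper. The paper does not give a formal proof of this corollary; it treats it as a direct consequence of Corollary~\ref{C:tr1} together with the degree bound $n < \pi\sqrt{2d}$ derived from Lemma~\ref{L:eqdeg} and the formula for~$\delta(n)$, and you have spelled out precisely this argument, including the key point (left implicit in the paper) that transcendence degree~$1$ forces the given irreducible~$F$ to coincide, up to a scalar, with the defining polynomial of a $\Q$-irreducible component of~$\calC$. One small remark: your parenthetical ``up to the $S_3$-symmetry'' is unnecessary and slightly misleading --- no symmetry needs to be applied, since $F$ must literally be a scalar multiple of the defining polynomial of the $\Q$-component through~$(\alpha,\beta)$; the $S_3 \times C_2$ action is only an optimization for reducing the enumeration, not part of the logical argument.
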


Note that for the effectivity statement, we need to \emph{know} that $\alpha$ and~$\beta$
generate a field of transcendence degree~$1$, and we need to \emph{know} the algebraic
relation linking them. For example, we cannot say whether $T(e, \pi)$ is empty or not,
since we do not know whether $e$ and~$\pi$ are algebraically dependent or not.

We have computed the complete list of all polynomials of bidegree~$(d,d)$
defining components of~$\calC$ for all~$d$ up to~$96$.
They were obtained either by computing resultants like~$R_m$
(but using two bicyclotomic polynomials instead of twice~$\psi_m$) and
factoring the result, or by using the relation between $x(P)$ and~$x(nP)$ for suitable~$n$,
in cases where such a dependency was satisfied in the relevant $\hat{G}$-orbit on~$M \times M$.
We used these polynomials as input for the computations described in the next section.


\section{Speculations on the size of $T(\alpha,\beta)$} \label{S:size}

In the last case of Proposition~\ref{P:general} one would in general not expect that
more than two branches pass through the same point: such intersections
are unlikely. This leads to the following (perhaps somewhat bold) conjecture.

\begin{conjecture} \label{conj2}
  There are only finitely many pairs $(\alpha, \beta) \in \bar{\Q} \times \bar{\Q}$
  with $\alpha, \beta \notin \{0,1\}$, $\alpha \neq \beta$
  and $\#T(\alpha,\beta) \ge 3$.
\end{conjecture}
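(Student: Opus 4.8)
Since Conjecture~\ref{conj2} is a conjecture and not a theorem, what follows is a program rather than a proof. By Proposition~\ref{P:general}, $\#T(\alpha,\beta)$ is bounded above by the number of branches of~$\calC$ passing through~$(\alpha,\beta)$, so it suffices to show that only finitely many points of $\PP^1 \times \PP^1(\Qbar)$ lie on three or more branches of~$\calC$. The first reduction is to split this into the case where three \emph{pairwise distinct} components of~$\calC$ meet at the point, and the degenerate cases where at most two components pass through it but at least one of them is singular there (so contributes several branches). The degenerate cases require controlling the singular points of the family $\{C_n\}_{n\ge 3}$ and are not obviously easier, but one expects them to yield to the same circle of ideas, so I concentrate on the main case.

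Thus the heart of the matter is to prove that for all but finitely many triples $(n_1,n_2,n_3)$ of orders, the corresponding components of~$\calC$ have empty common intersection. Once this is known, the remaining finitely many triples contribute only finitely many points, and we are done. This is precisely an assertion that the simultaneous vanishing of three curves on a surface is an \emph{unlikely intersection}: a point lying on a component of order~$n_i$ records a parameter~$\lambda_i$ for which the section $\bigl(P_\alpha(\lambda_i), P_\beta(\lambda_i)\bigr)$ of the family of abelian surfaces $E_\lambda \times E_\lambda$ is torsion of order~$n_i$, and having three such~$\lambda_i$ cuts out an atypically small intersection inside the three-dimensional mixed Shimura variety parametrising an elliptic curve together with an ordered pair of points on it. A Zilber--Pink type input in this setting would suffice, but the relevant instance is not available (it is essentially equivalent to the conjecture), so a direct attack seems to need the uniform height machinery that has grown around relative Bogomolov-type bounds. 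The plan would then be: first, bound the height of a triple point using a uniform bound for the height of a torsion parameter of a non-degenerate section, together with the fact that $\alpha$ and~$\beta$ are algebraic of controlled degree over $\Q(\lambda_1)$; second, use Lemma~\ref{L:eqdeg} and the estimates $\delta(n) \gg n^2$ to relate $[\Q(\alpha,\beta):\Q]$ to the orders~$n_i$ in both directions, so that a bounded height forces the~$n_i$, and hence the triple, to lie in a finite set; third, conclude by Northcott's theorem.

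The main obstacle, and the reason this remains only a conjecture, is the uniformity that is needed across the infinitely many components of~$\calC$. For any single curve, the finiteness of the set of torsion parameters is the Masser--Zannier theorem; but $\calC$ has components of arbitrarily large bidegree, and a fixed algebraic point could a priori lie on three of them, so one must rule this out with bounds on height and degree that do \emph{not} depend on the pair $(\alpha,\beta)$. This amounts to non-degeneracy of the associated Betti map with constants uniform in the section --- exactly the kind of statement that recent height methods are designed to provide, but that has not been worked out in a form covering the present family. A weaker and perhaps more accessible goal is a uniform bound $\#T(\alpha,\beta) \le N_0$; even this appears to lie just beyond current technology, and at present the computations reported in Section~\ref{S:size} are the main evidence for Conjecture~\ref{conj2}.
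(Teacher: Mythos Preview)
The statement is a conjecture, and the paper does not prove it; the paper's own treatment consists of the single remark that ``it would be interesting to investigate if this conjecture would follow from some version of the Zilber--Pink Conjecture(s)'', followed by the computational evidence of Section~\ref{S:size}. You correctly recognize at the outset that no proof is available and present a program rather than a proof, so there is no gap to flag.

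Your discussion is in fact considerably more detailed than anything the paper offers. The paper does not carry out the reduction via Proposition~\ref{P:general} to branch-counting, does not separate the three-component case from the singular-component case, and does not articulate the uniform-height strategy you sketch (bounding heights of triple points, linking degree and torsion order via Lemma~\ref{L:eqdeg} and the estimate $\delta(n) \gg n^2$, and finishing with Northcott). These are reasonable heuristics, and your identification of the main obstacle --- the need for height and degree bounds uniform across the infinitely many components of~$\calC$ --- is apt. The paper's Conjecture~\ref{conj4} (a uniform height bound for pairs with $\#T(\alpha,\beta) \ge 2$) is a closely related statement in this direction, and you might mention that even this weaker uniform bound is stated only as a conjecture supported by the data in Table~\ref{Tableht}. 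In short: your proposal goes well beyond the paper in outlining a strategy, while correctly acknowledging that the conjecture remains open.
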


It would be interesting to investigate if this conjecture would follow
from some version of the Zilber-Pink Conjecture(s).

To get some evidence related to this question, we took all irreducible components over~$\Q$
of~$\calC$ given by equations $f(a,b) = 0$ with
$\deg_{ab} f \mathrel{:=} \deg_a f + \deg_b f \le 192$ (as mentioned at the end
of the previous section, we had computed all these equations).
We then computed all intersections of two components $\{f_1 = 0\}$ and $\{f_2 = 0\}$
such that $(\deg_{ab} f_1) \cdot (\deg_{ab} f_2) \le 384$, and all singularities
of components $\{f = 0\}$ with $(\deg_{ab} f)^2 \le 384$. After removing parts contained
in $\{a(1-a)b(1-b)(a-b) = 0\}$, we split the resulting finite schemes into irreducible
components over~$\Q$ and computed the set $T_{50}(\alpha, \beta)$ for a representative
point $(\alpha, \beta)$ for each of these irreducible components. Here
\[ T_n(\alpha, \beta) = \{\lambda \in \C \setminus \{0,1\} :
                          \text{$P_\alpha(\lambda)$ and $P_\beta(\lambda)$
                          are both torsion of order $\le n$}\} \,.
\]
To reduce the amount of computation, we make use of the fact that the group
$\Gamma = S_3 \times C_2$ acts on~$\calC$, where the action of~$S_3$ is diagonal
on both coordinates and generated by the involutions $x \mapsto 1-x$ and $x \mapsto 1/x$,
and the action of the cyclic group~$C_2$ is given by swapping the coordinates.
(This action lifts to an action on~$\calZ$, where $S_3$ acts diagonally on $(a,b,\lambda)$.)
This implies that $\Gamma$ also acts on the countable disjoint union of $\Q$-integral
finite schemes making up the set of all $(\alpha, \beta)$ with $\#T(\alpha, \beta) \ge 2$.
It is therefore sufficient to find one representative scheme in each $\Gamma$-orbit.

This computation produced 82\,717~irreducible finite schemes, falling into
8\,083~$\Gamma$-orbits, and containing 2\,212\,784~geometric points in total, consisting
of points $(\alpha, \beta)$ such that $\#T_{50}(\alpha, \beta) \ge 2$. Of these, 180~schemes
making up 24~orbits and containing 558~geometric points have sets $T_{50}(\alpha, \beta)$
with three or more elements. This supports Conjecture~\ref{conj2} in that it shows
that such pairs are quite rare. A list of representatives of all such orbits
with $\#T_{50}(\alpha, \beta) = 2$
is obtained by loading the file at~\cite{Twolambdas} into Magma.
The orbits with $\#T_{50}(\alpha, \beta) \ge 3$ are given with more detailed
information in the file at~\cite{Atleast3}.
The latter orbits are as follows. We begin with those that have $\#T_{50} = 3$.

There is one example with $\alpha, \beta \in \Q$, which is
represented by
\[ \{\alpha, \beta\} = \Bigl\{\frac{3}{8}, -\frac{9}{16}\Bigr\}
   \qquad\text{with}\qquad
   T_{50}(\alpha,\beta) = \Bigl\{-\frac{9}{16}, \frac{3}{128}, \frac{81}{256}\Bigr\} \,.
\]
Note that $\alpha$ and~$\beta$
both reduce to~$\infty$ mod~$2$, illustrating Corollary~\ref{cor1}.
The orders of the points with $x$-coordinate $\alpha$ and~$\beta$ on~$E_\lambda$
are $(4,2)$, $(6,6)$ and~$(8,4)$.

Then there are six examples with $\alpha$ and~$\beta$ in a quadratic field, as listed
in the following table.
The entry `orders' records the torsion orders of the points with $x$-coordinate
$\alpha$ and~$\beta$, for each of the given values of~$\lambda$.
(Note that this order may change by a factor of~$2$ under the $S_3$-action,
since $x \mapsto 1/x$ interchanges the origin on~$E_\lambda$ with a point of
order~$2$.)
{\renewcommand{\arraystretch}{1.5}
\[ \begin{array}{|c|c||c||c|}
      \alpha & \beta & T_{50}(\alpha, \beta) & \text{orders} \\\hline
      \frac{7+5\sqrt{-7}}{14} & \frac{7-11\sqrt{-7}}{14} &
       \{\frac{7-11\sqrt{-7}}{14}, \frac{21+31\sqrt{-7}}{42}, \frac{49-13\sqrt{-7}}{98}\} &
       (6,2), (8,8), (6,6) \\
      1+\sqrt{2} & -1+\sqrt{2} & \{-1, 7-4\sqrt{2} \pm (4-2\sqrt{2})\sqrt{2-\sqrt{2}}\} &
       (4,4), (5,10), (5,10) \\
      \frac{5+3\sqrt{-15}}{10} & \frac{15-7\sqrt{-15}}{30} &
       \{\frac{5-13\sqrt{-15}}{10}, \frac{45+11\sqrt{-15}}{90},
         \frac{75+61\sqrt{-15}}{150}\} &
         (6,3), (4,6), (12,6) \\
      \frac{15+7\sqrt{-15}}{30} & \frac{45-11\sqrt{-15}}{90} &
       \{\frac{27+19\sqrt{-15}}{54}, \frac{45-11\sqrt{-15}}{90},
         \frac{75-61\sqrt{-15}}{150}\} &
         (5,5), (6,2), (6,3) \\
      \frac{1+\sqrt{17}}{2} & \frac{-7+\sqrt{17}}{2} &
       \{\frac{-7+\sqrt{17}}{2}, \frac{33-7\sqrt{17}}{2}, \frac{-31-7\sqrt{17}}{2}\} &
       (4,2), (3,4), (3,6) \\
      \frac{-7+\sqrt{17}}{2} & \frac{33-7\sqrt{17}}{2} &
       \{\frac{33-7\sqrt{17}}{2}, \frac{-895+217\sqrt{17}}{2}, \frac{3+11\sqrt{17}}{6}\} &
       (4,2), (6,4), (10,10) \\
      \hline
   \end{array}
\]
}

There are nine examples over quartic fields,
one each over $\Q(\sqrt{2},\sqrt{3})$ (orders ($(4,4)$, $(6,3)$, $(10,5)$),
$\Q(\sqrt{-3},\sqrt{13})$ (orders $(4,6)$, $(6,4)$, $(10,10)$),
a dihedral field containing~$\Q(\sqrt{2})$ (orders $(4,8)$, $(8,4)$, $(10,10)$),
one containing~$\Q(\sqrt{5})$ (orders $(4,4)$, $(6,7)$, $(6,7)$),
one containing~$\Q(\sqrt{-7})$ (orders $(4,2)$, $(6,6)$, $(7,7)$),
one containing~$\Q(\sqrt{17})$ (orders $(3,4)$, $(4,6)$, $(5,10)$)
and one containing~$\Q(\sqrt{33})$ (orders $(3,6)$, $(6,4)$, $(9,6)$).
Another dihedral field containing~$\Q(\sqrt{17})$ shows up twice, with
orders $(2,6)$, $(4,6)$, $(7,7)$ and $(4,6)$, $(5,5)$, $(7,7)$.

There are two examples over fields of degree six, one over $\Q(\zeta_7)$
with orders $(7,7)$, $(8,8)$, $(9,9)$, and one over a quadratic extension
of the cubic field of discriminant~$-31$ (orders $(3,6)$, $(6,6)$, $(7,7)$).
In addition, there is one example over the octic field~$\Q(\zeta_{15})$
(orders $(8,4)$, $(8,8)$, $(6,10)$), one over another octic field that is
Galois over~$\Q$ and a quadratic extension of~$\Q(\sqrt{-1}, \sqrt{5})$
(orders $(4,4)$, $(7,7)$, $(7,7)$),
and one example over a field of degree~$16$ (orders $(6,6)$, $(10,10)$, $(10,10)$).

Then there are two further orbits that have $\#T_{50} = 4$.
One is represented by
\begin{align*}
  \{\alpha, \beta\} &= \Bigl\{\frac{-7 + \sqrt{17}}{2}, \frac{9 + \sqrt{17}}{2}\Bigr\}
   \qquad\text{with} \\
   T_{50}(\alpha,\beta) &= \Bigl\{\frac{-31-7\sqrt{17}}{2}, \frac{33-7\sqrt{17}}{2},
                              \frac{17-23\sqrt{17}}{34}, \frac{3+11\sqrt{17}}{6}\Bigr\}\,.
\end{align*}
The pairs of orders of the points are $(6,4), (4,6), (6,6), (10,10)$.
The other example is over the (sextic) Hilbert class field of~$\Q(\sqrt{-23})$
and has orders $(3,6)$, $(6,6)$, $(7,7)$ and~$(14,14)$.

Finally, we have
\begin{align*}
  \{\alpha, \beta\} = \{\sqrt{-1}, -\sqrt{-1}\}
   \qquad\text{with}\qquad
   T_{100}(\alpha,\beta) = \Bigl\{-1, 3 \pm 2\sqrt{2}, \frac{1 \pm 2\sqrt{-2}}{3} \Bigr\}
\end{align*}
of size~$5$.
The pairs of orders of the points are here $(4,4), (6,6), (6,6), (10,10), (10,10)$.
All the number fields occurring here have trivial class group, except for
$\Q(\sqrt{-15})$ and~$\Q(\sqrt{-3}, \sqrt{13})$, which both have class number~$2$.

Note that for any given bicyclotomic polynomial~$B(\lambda, x)$, we can take
the factors of~$B(\lambda, \alpha)$ as a polynomial in~$\lambda$ over~$\Q(\alpha)$
and for each factor~$f$ consider a root~$\lambda_0$ of~$f$ and check if the points
on~$E_{\lambda_0}$ with $x$-coordinate~$\beta$ have finite order or not
(for example by considering the reductions modulo suitable prime ideals of
small degree; note that a point of finite order reduces to a point of the same
finite order modulo all odd primes of good reduction, so if we find two different
orders in this way, we know that the point must have infinite order).
In this way, we checked that for any $\lambda \in T(\sqrt{-1}, -\sqrt{-1})$ not
in the list above, the orders of both pairs of points on~$E_\lambda$ must be
larger than~$200$. We also found that all~$B(\lambda, \sqrt{-1})$ of orders
up to~$200$ are irreducible over~$\Q(\sqrt{-1})$, with one exception at order~$10$.
In addition, we checked for each of the other 23~orbits of points with $\#T_{50} \ge 3$
that for any unknown $\lambda \in T(\alpha, \beta)$ both points $P_\alpha(\lambda)$
and~$P_\beta(\lambda)$ must have order $> 100$.
This suggests the following conjecture.

\begin{conjecture} \label{conj-b5} \strut\vspace{-2mm}
  \begin{enumerate}[\upshape (1)]\addtolength{\itemsep}{1mm}
    \item $T(\sqrt{-1}, -\sqrt{-1}) = \{-1, 3 \pm 2\sqrt{2}, \tfrac{1}{3} \pm \tfrac{2}{3}\sqrt{-2}\}$.
    \item All other sets $T(\alpha, \beta)$ have at most four elements.
  \end{enumerate}
\end{conjecture}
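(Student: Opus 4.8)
Since Conjecture~\ref{conj-b5} is a conjecture rather than a theorem, what follows is an outline of how I would attack it; the decisive step is, at present, out of reach with the tools assembled in this paper, which is presumably why it is not promoted to a theorem. For the exact determination in~(1), the inclusion ``$\supseteq$'' is a finite verification: for each of the five displayed values of~$\lambda$ I would check directly --- via division polynomials, or via reduction modulo several primes of good reduction in the relevant number field --- that both $P_{\sqrt{-1}}(\lambda)$ and $P_{-\sqrt{-1}}(\lambda)$ are torsion, of the orders $(4,4),(6,6),(6,6),(10,10),(10,10)$ recorded in the text. For ``$\subseteq$'': since $\Q(\sqrt{-1})$ has transcendence degree~$0$, the set $T(\sqrt{-1},-\sqrt{-1})$ is finite, and each of its elements is a common root of $\psi_n(\lambda,\sqrt{-1})$ and $\psi_n(\lambda,-\sqrt{-1})$ for some common~$n$; thus $(\sqrt{-1},-\sqrt{-1})$ lies on the curves~$C_n$ (it is on the bidegree-$(1,1)$ component $a+b=0$ of~$\calC$), and by the proof of Proposition~\ref{P:general}, $\#T(\sqrt{-1},-\sqrt{-1})$ is at most the number of branches of~$\calC$ through $(\sqrt{-1},-\sqrt{-1})$. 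The value $\lambda=-1$ arises because on~$E_{-1}$ the points with $x$-coordinates $\sqrt{-1}$ and~$-\sqrt{-1}$ differ by the $2$-torsion point~$(1,0)$. The remaining task is to list all components of~$\calC$ through this point, with their local branches, and match them with the four exotic values $3\pm 2\sqrt{2}$, $\tfrac{1}{3}\pm\tfrac{2}{3}\sqrt{-2}$.

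The main obstacle is bounding the torsion order~$n$. The $2$-adic arguments of the earlier sections are of no use here, because $\sqrt{-1}$ and $-\sqrt{-1}$ are ``$2$-adically close'': both reduce to~$1$ under~$\rho$, and $v\bigl(\sqrt{-1}-(-\sqrt{-1})\bigr) = v(2\sqrt{-1}) = 1 > v(\sqrt{-1}-1) = \tfrac{1}{2}$, so no hypothesis of Proposition~\ref{P:finite} holds; indeed $R(\sqrt{-1})\cap R(-\sqrt{-1})\neq\emptyset$, and the four exotic values of~$\lambda$ all lie in this intersection (they are not in $\Q(\sqrt{-1})$, hence not in $S(\sqrt{-1})\cup S(-\sqrt{-1})$). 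The routes I would try are: (a)~sharpen Theorem~\ref{T:Talpha2} by iterating the Latt\`es map~$f_\lambda$ beyond a single step, as suggested in the remark after that theorem, so as to cut $R(\sqrt{-1})\cap R(-\sqrt{-1})$ down to a finite and computable set; or (b)~use the Tate-parameterization expansions from the proof of Proposition~\ref{P:res} to bound the height of~$\lambda$ in terms of~$n$, and play this against the fixed relation $\sqrt{-1}+(-\sqrt{-1})=0$. Neither currently delivers a bound of the required strength; the computations only certify the absence of further~$\lambda$ coming from torsion of order $\le 200$, which is exactly the gap keeping the statement conjectural.

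Part~(2) is a uniform version of the same difficulty: since (away from $\lambda\in\{0,1,\infty\}$) $\#T(\alpha,\beta)$ equals the number of branches of~$\calC$ through $(\alpha,\beta)$, the claim is that no point lies on more than five branches, and five only on the $\Gamma$-orbit of $(\sqrt{-1},-\sqrt{-1})$. Here I would reduce to finitely many $\Gamma$-orbits and then seek a uniform bound on the orders~$n$. The natural framework is Zilber--Pink, as the authors note after Conjecture~\ref{conj2}: the pairs lying on $\ge 2$ components of~$\calC$ form a countable ``special'' set, for which one expects uniform control of intersection multiplicities. In both parts, the step I expect to be truly hard --- and where the paper itself resorts to computation --- is gaining any grip on the components of~$\calC$ of large order via the $\hat G$-orbit description; equivalently, bounding the relevant torsion orders uniformly. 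An unconditional proof would thus require either a Zilber--Pink input or substantially new effective estimates.
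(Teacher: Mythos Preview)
Your assessment is correct: Conjecture~\ref{conj-b5} is not proved in the paper, and the supporting evidence is purely computational (any further $\lambda\in T(\sqrt{-1},-\sqrt{-1})$ would force both torsion orders above~$200$, and above~$100$ for the other twenty-three orbits with $\#T_{50}\ge 3$). Your diagnosis of the obstruction --- the $2$-adic tools of Proposition~\ref{P:finite} fail because $\sqrt{-1}$ and $-\sqrt{-1}$ are $2$-adically too close, and no effective bound on the relevant torsion orders is available --- matches the paper's own discussion precisely.
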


We also remark that in our computations, we never found more than two branches
through any singular point of an irreducible component of~$\calC$. So we propose:

\begin{conjecture} \label{conjsing}
  There is a number~$N$ such that for any irreducible component~$C$ of~$\calC$
  and any (singular) point~$P$ on~$C$ outside the `bad set' given by $a(a-1)b(b-1)(a-b) = 0$,
  there are at most~$N$ branches of~$C$ through~$P$ (equivalently, $P$ has at most~$N$
  preimages in the component~$Z$ of~$\calZ$ that maps birationally to~$C$).
\end{conjecture}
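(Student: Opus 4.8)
Since this is stated as a conjecture, I can only sketch a strategy, and I do not expect it to yield an unconditional proof with the methods of this paper. The first step is to translate the statement. As explained just before it, the number of branches of~$C$ at $P=(\alpha,\beta)$ equals the number of points of the normalisation of~$C$ lying over~$P$; writing $Z\subseteq\calZ$ for the component mapping birationally onto~$C$ (Proposition~\ref{P:res}) and $\widetilde Z$ for its smooth model, this normalisation is~$\widetilde Z$. Since $\calZ$ is smooth away from the fibres over $\lambda\in\{0,1,\infty\}$, a point of~$\widetilde Z$ over~$P$ is either an honest point $(\alpha,\beta,\lambda)\in Z$ with $\lambda\notin\{0,1,\infty\}$, or a branch of the projective closure~$\overline Z$ lying over one of the three degenerate fibres. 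For $P$ off the bad set the branches of the second kind are harmless: along such a branch $\alpha$ is the limit of the $x$-coordinate of a torsion point as $E_\lambda$ degenerates, and these limits are explicit rational functions of a root of unity or of a $q$-expansion parameter (as in the Tate parametrisation used in the proof of Proposition~\ref{P:res}); solving a relation such as $4\zeta/(1-\zeta)^2=\alpha$ shows that only absolutely boundedly many such branches pass through a fixed off-bad point. It therefore remains to bound $\#\{\lambda\in\C\setminus\{0,1\} : (\alpha,\beta,\lambda)\in Z\}$ uniformly over off-bad points~$(\alpha,\beta)$ and over all components~$Z$.

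One (conditional) route is to deduce this from Conjecture~\ref{conj2}: combined with the Masser--Zannier finiteness of $T(\alpha,\beta)$ for algebraic arguments and the fact (Proposition~\ref{P:general}(3)) that an off-bad point with $\#T\ge 2$ has $\alpha,\beta$ algebraic, Conjecture~\ref{conj2} would force all but finitely many off-bad $(\alpha,\beta)$ to satisfy $\#T(\alpha,\beta)\le 2$, while the finitely many exceptions each have finite $T(\alpha,\beta)$; this bounds the count above uniformly. So one may as well attack Conjecture~\ref{conj2} directly, presumably through some form of the Zilber--Pink conjecture, the uniformity over all components having to be extracted from a uniform height bound for the anomalous points.

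For an \emph{unconditional} reduction I would run a genus-and-$\delta$-invariant count. Write $C=\{F=0\}$; if it arises from two bicyclotomic polynomials then it has bidegree $(d,d)$ by Lemma~\ref{L:eqdeg}, hence arithmetic genus $(d-1)^2$, and removing from~$\widetilde Z$ the fibres over $0,1,\infty$ yields a connected \'etale cover of $\PP^1_\lambda\setminus\{0,1,\infty\}$ of degree~$2d$ (the size of the orbit $O\subseteq M\times M$ defining~$Z$), whose ramification over $0,1,\infty$ is governed combinatorially by the action of the local monodromies on~$O$. Riemann--Hurwitz gives $g(\widetilde Z)=1-2d+R/2$ with $R$ the total ramification, so $\sum_P\delta_P=(d-1)^2-g(\widetilde Z)=d^2-R/2$, the sum running over the singular points of~$C$. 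Since a point with $r_P$ branches satisfies $\delta_P\ge r_P-1$, it suffices to show $\sum_{P\in\mathrm{bad}}\delta_P\ge d^2-R/2-N_0$ for an absolute constant~$N_0$; then $\sum_{P\notin\mathrm{bad}}(r_P-1)\le\sum_{P\notin\mathrm{bad}}\delta_P\le N_0$, and $N=N_0+1$ works. The singularities of~$C$ along $a(a-1)b(b-1)(a-b)=0$ --- at $(0,0)$, $(1,1)$, $(\infty,\infty)$, at mixed points such as $(0,1)$, and along the diagonal $a=b$ --- can be analysed by exactly the kind of Puiseux and Tate-parameter expansions carried out in the proof of Proposition~\ref{P:res}, now centred at $a=1$, $a=\infty$ and $a=b$ as well, from which the local $\delta$-invariants (and the matching ramification) can be read off. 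The components of~$\calC$ of the form $B(a,b)=0$ or $B(b,a)=0$ are trivially fine: every singularity of $\{B(\lambda,x)=0\}$ lies over $\lambda\in\{0,1,\infty\}$, hence on the bad set.

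The main obstacle is exactly this last estimate: proving that the (extremely degenerate) singularities forced on~$C$ along $a(a-1)b(b-1)(a-b)=0$ account for all of $d^2-R/2$ up to a constant \emph{independent of the order~$n$ and the orbit~$O$}. The computation in the proof of Proposition~\ref{P:res} is essentially the $a\to 0$ slice of this bookkeeping for a single component; carrying it out at all boundary strata and then making it uniform over the infinitely many components seems to require a genuinely new idea, and I would not be surprised if an honest unconditional proof has to pass through Zilber--Pink --- which is no doubt why the statement is only conjectural.
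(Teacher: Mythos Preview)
The statement is a conjecture: the paper offers no proof, only the computational remark that in all examples checked at most two branches were seen. So there is nothing in the paper to compare your attempt against, and you correctly acknowledge this at the outset.

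Your conditional reduction to Conjecture~\ref{conj2} is a genuine (and apparently new) observation: each branch of a single component~$Z$ through an off-bad point~$(\alpha,\beta)$ with $\lambda\notin\{0,1,\infty\}$ does contribute a distinct element to~$T(\alpha,\beta)$, so Conjecture~\ref{conj2} together with Masser--Zannier finiteness bounds that part uniformly. The one soft spot is your treatment of branches limiting to $\lambda\in\{0,1,\infty\}$: you assert that ``only absolutely boundedly many such branches pass through a fixed off-bad point'', but your justification (solving $4\zeta/(1-\zeta)^2=\alpha$) only handles one cusp type at one degenerate fibre. A complete argument would need to track all limit values of torsion $x$-coordinates at each of $\lambda=0,1,\infty$ (as in the Puiseux analysis of Proposition~\ref{P:res}) and check that for $(\alpha,\beta)$ off the bad set the number of matching branches of~$\widetilde Z$ is bounded independently of the order~$n$. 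This is plausible --- the nonzero limits at $\lambda\to 0$ are the values $-4/\gamma_k$, and for fixed~$\alpha$ at most one~$k$ (per~$n$) can match --- but it is not quite what you wrote.

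Your unconditional $\delta$-invariant strategy is a reasonable outline, and you are right to flag the uniform lower bound $\sum_{P\in\text{bad}}\delta_P\ge d^2-R/2-N_0$ as the real obstruction. Nothing in the paper suggests how to obtain such a bound, so your closing assessment --- that an unconditional proof likely needs input of Zilber--Pink type --- matches the spirit of the paper's own remarks.
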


Our computations suggest that perhaps one can even take $N = 2$. Note that it
would follow that every $\lambda \in T(\alpha, \beta)$ has the property that
$[\Q(\alpha, \beta, \lambda) : \Q(\alpha, \beta)] \le N$. This implies that
Conjecture~\ref{conjsing} with an explicit~$N$ would give an \emph{effective}
procedure for determining $T(\alpha, \beta)$ for algebraic $\alpha$ and~$\beta$.
Namely, we can find an explicit bound for the height of the elements of~$T(\alpha)$,
say (see~\cite{Silverman1983});
together with the bound on the degree, this leaves finitely many candidates
for~$\lambda$, which we can check for membership in~$T(\alpha, \beta)$.

\begin{table}[htb]
  \hrulefill
  
  \[ \begin{array}{|r||c|c|c|c|c|c|c|}
      \text{degree} & [0, 1) & [1, 2) & [2, 3) & [3, 4) & [4, 5) & [5, 6) & [6, \infty) \\\hline
                  1 &     0  &     0  &     1  &     3  &     0  &     2  &     0  \\
                  2 &     7  &    13  &    16  &    19  &    12  &     5  &     2  \\
                  3 &     6  &    15  &    37  &    17  &     5  &     1  &     0  \\
                  4 &    21  &    44  &    78  &    34  &    23  &     2  &     4  \\
                  5 &    13  &    37  &    47  &    19  &     3  &     1  &     0  \\
                  6 &    28  &    68  &   125  &    44  &     7  &     7  &     0  \\
                  7 &     0  &    30  &    58  &    20  &     3  &     0  &     0  \\
                  8 &    10  &   104  &   105  &    40  &    11  &     3  &     0  \\
                  9 &     2  &    52  &    64  &    22  &     3  &     0  &     0  \\
                 10 &    13  &    92  &   113  &    48  &     5  &     2  &     0  \\
                 11 &     3  &    51  &    66  &    15  &     1  &     1  &     0  \\
                 12 &    14  &    94  &   137  &    41  &    10  &     7  &     0  \\
                 13 &     1  &    59  &    55  &     9  &     5  &     0  &     0  \\\hline
      \text{total\large\strut}
                    &   118  &   659  &   902  &   331  &    88  &    31  &     6  \\\hline
    \end{array}
  \]
  
  \caption{Distribution of heights of schemes with $\#T \ge 2$} \label{Tableht}
  \hrulefill
\end{table}

Our computations also suggest the following further conjecture.

\begin{conjecture} \label{conj3}
  Fix $d \ge 1$. Then there are only finitely many $\alpha,\beta \in \bar{\Q}$,
  both of degree at most~$d$, with $\alpha, \beta \notin \{0,1\}$, $\alpha \neq \beta$
  and $\#T(\alpha, \beta) \ge 2$.
\end{conjecture}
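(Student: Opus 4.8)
I would expect Conjecture~\ref{conj3} to follow from a suitable Zilber--Pink-type statement (as one hopes for Conjecture~\ref{conj2} as well), and, lacking such a reference, to require a genuinely new uniform estimate. Let me first reformulate. If $\alpha,\beta\in\bar{\Q}$ with $\#T(\alpha,\beta)\ge 2$, then by Proposition~\ref{P:general}(3) the pair $(\alpha,\beta)$ is a singular point of a component of~$\calC$ or an intersection point of two distinct components; concretely there are $\lambda_1\neq\lambda_2\in T(\alpha,\beta)$, and, taking $n_i$ to be the least common multiple of the orders of $P_\alpha(\lambda_i)$ and~$P_\beta(\lambda_i)$, the point $(\alpha,\beta)$ lies on~$C_{n_1}$ and~$C_{n_2}$ with at least two distinct branches through it. Fix~$d$ and assume $\deg\alpha,\deg\beta\le d$, so that $[\Q(\alpha,\beta):\Q]\le d^2$. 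For each fixed pair $(n_1,n_2)$ the relevant set — $C_{n_1}\cap C_{n_2}$ minus the obvious components, or the singular locus of one of them — is finite, so the entire content is to show that only finitely many pairs $(n_1,n_2)$ can occur for pairs of degree $\le d$, i.e.\ to bound $\max(n_1,n_2)$ in terms of~$d$.

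The cleanest potential route avoiding Zilber--Pink is to prove a uniform bound on $h(\alpha)$ and $h(\beta)$ over all pairs with $\#T(\alpha,\beta)\ge 2$ (Conjecture~\ref{conj4}): together with the degree restriction this gives finiteness by Northcott, and in fact then makes the degree restriction unnecessary. The mechanism I would try to implement is the Arakelov/equidistribution argument behind the dynamical proof of de~Marco--Wang--Ye~\cite{deMarcoWangYe}: the two conditions $\lambda_1,\lambda_2\in T(\alpha,\beta)$ amount to ``$\alpha$ is preperiodic for both $f_{\lambda_1}$ and $f_{\lambda_2}$'' and likewise for~$\beta$, a codimension-two constraint; an equidistribution statement for the adelic metrics attached to $f_\lambda$-preperiodicity of~$\alpha$ and of~$\beta$ along the curve cut out by the relation between $\alpha$ and~$\beta$ ought to force that curve into a proper subvariety of~$\calC$, and, if the convergence can be made effective, to bound the height of $(\alpha,\beta)$.

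A second route is to run the Pila--Zannier method uniformly in~$d$. One would view the configuration consisting of $(\alpha,\beta,\lambda_1,\lambda_2)$ together with the torsion data on $E_{\lambda_1}$ and $E_{\lambda_2}$ as a single point of a definable set, use that the two torsion conditions cut out a definable set of the expected (low) dimension, and play the Pila--Wilkie bound against a lower bound for the number of Galois conjugates of this configuration. Such a lower bound grows with $\max(n_1,n_2)$, since a torsion point of order~$n$ on $E_\lambda$ has $\gg_{\eps}n^{1-\eps}$ conjugates over any fixed number field with an implied constant independent of~$\lambda$, and since the mod-$n$ monodromy of the Legendre family differs from the full $\PGL(2,\Z/n\Z)$ only by the level-$2$ congruence recorded by the group~$\hat G$ of Section~\ref{S:trdeg1}. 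The bookkeeping needed here is an orbit count for~$\hat G$ on $M\times M$, compatible with the birationality of $\calZ\to\calC$ (Proposition~\ref{P:res}).

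The main obstacle, common to both routes, is exactly the hard core of unlikely-intersection theorems: a \emph{uniform, quantitative} input. One cannot argue naively that ``$(\alpha,\beta)$ has large degree because $\max(n_1,n_2)$ is large'': a point on a single high-order component~$C_n$ may have very small degree over~$\Q$, so one must genuinely exploit that there are \emph{two} torsion parameters, which is precisely the unlikely-intersection phenomenon and where the difficulty concentrates. For the height route the qualitative conclusion (finiteness of $T(\alpha,\beta)$ for each fixed pair) is known, and the task is to make it uniform over the pair; for the Pila--Zannier route the delicate point is that $\lambda_1$ and $\lambda_2$ vary with $(\alpha,\beta)$, so one must control the joint, not individual, Galois action. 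I would start with small~$d$, where the components of~$\calC$ of small bidegree are explicitly available from Section~\ref{S:trdeg1}, in order to test the argument before attempting the general estimate.
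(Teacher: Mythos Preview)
The paper does not prove this statement: it is stated as a conjecture supported by computational evidence (Table~\ref{Tableht} and the surrounding discussion), and the only ``argument'' the paper offers is the one-line observation that Conjecture~\ref{conj3} would follow from Conjecture~\ref{conj4}, the uniform height bound. So there is no proof in the paper to compare against.

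Your proposal is not a proof either, and you are honest about this: you outline two plausible strategies (equidistribution \`a la de~Marco--Wang--Ye, and a uniform Pila--Zannier argument) and correctly identify the essential obstacle in each, namely the need for a \emph{uniform} quantitative input that is currently unavailable. Your reduction to bounding $\max(n_1,n_2)$ in terms of~$d$ is sound, and your observation that the height route is exactly Conjecture~\ref{conj4} matches the paper's remark. Where you go beyond the paper is in sketching concrete mechanisms (adelic equidistribution, Pila--Wilkie versus Galois-orbit lower bounds) that might eventually yield such a bound; the paper does not attempt any of this. In short: your assessment of the problem is accurate and more detailed than the paper's, but neither you nor the paper has a proof, and the gap you name---the missing uniform estimate---is the genuine one.
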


For example, it appears that other than the orbit mentioned above with
$\#T(\alpha, \beta) \ge 3$, there might be only six further orbits (each of size~$12$)
of pairs of rational numbers~$\alpha$ and~$\beta$ with $\#T(\alpha, \beta) \ge 2$,
represented by $(-3,3)$, $(-5/4, 5/2)$, $(-4/5, 8/5)$, $(-3,9)$, $(-9/16, 9/4)$
and~$(-27/5, -3/5)$. Note that for the first two of these, one can deduce that
in fact $\#T(\alpha, \beta) = 2$ via Corollary~\ref{C:rat}.

Conjecture~\ref{conj3} would follow from the following:

\begin{conjecture} \label{conj4}
  There is an absolute bound $B$ such that for all
  $\alpha, \beta \in \bar{\Q} \setminus \{0,1\}$ with $\alpha \neq \beta$
  and $\#T(\alpha, \beta) \ge 2$, we have $h(\alpha), h(\beta) \le B$.
\end{conjecture}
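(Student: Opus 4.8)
The plan is to read Conjecture~\ref{conj4} as a height bound for small points and to attack it through arithmetic equidistribution, in the spirit of Baker--DeMarco and its successors (DeMarco--Krieger--Ye, DeMarco--Mavraki, Yuan--Zhang). By Proposition~\ref{P:general}(3) we may assume $\alpha,\beta\in\bar{\Q}$ are distinct and algebraic. Recall from the discussion after Theorem~\ref{T:Talpha} that $\lambda\in T(\alpha)$ if and only if $\alpha$ is preperiodic for the Latt\`es map $f_\lambda$, equivalently $\hat h_{E_\lambda}(P_\alpha(\lambda))=0$. Via the theory of variation of canonical heights in families (Silverman, Tate, Call--Silverman) together with its Berkovich-analytic refinement, the map $\lambda\mapsto\hat h_{E_\lambda}(P_\alpha(\lambda))$ is, up to a bounded function on $\PP^1_\lambda$, the height attached to a semipositive adelically metrized $\Q$-line bundle $\overline{L}_\alpha$ on $\PP^1_\lambda$ of positive degree (bounded away from $0$ uniformly in $\alpha$), whose curvature measure $\mu_{\alpha,v}$ at a place $v$ is the bifurcation measure of the family $(f_\lambda)$ based at $\alpha$. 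Crucially, $T(\alpha)$ is \emph{exactly} the set of algebraic points of $\overline{L}_\alpha$-height zero; since $T(\alpha)$ is infinite, $\overline{L}_\alpha$ has vanishing essential minimum, and quantitative equidistribution applies to Galois orbits inside $T(\alpha)$.

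Now suppose $\#T(\alpha,\beta)\ge 2$, and write $\overline{M}_\alpha$, $\overline{M}_\beta$ for the degree-$1$ normalizations of $\overline{L}_\alpha$, $\overline{L}_\beta$. A quantitative form of the arithmetic Hodge index inequality of Yuan--Zhang --- the mechanism behind Baker--DeMarco's theorem --- yields a bound of the shape
\[
  \#\bigl(T(\alpha)\cap T(\beta)\bigr)\ \le\ \frac{C}{\delta(\overline{M}_\alpha,\overline{M}_\beta)}\,,
\]
where $C$ is absolute and $\delta(\overline{M}_\alpha,\overline{M}_\beta)=-\langle\,\overline{M}_\alpha-\overline{M}_\beta,\ \overline{M}_\alpha-\overline{M}_\beta\,\rangle\ge 0$ is the arithmetic ``distance'' between the two metrized bundles, a sum $\sum_v\delta_v$ of nonnegative local energies measuring how far $\mu_{\alpha,v}$ is from $\mu_{\beta,v}$. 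Here $\delta(\overline{M}_\alpha,\overline{M}_\beta)>0$ automatically: if it vanished then $T(\alpha)=T(\beta)$, so $T(\alpha,\beta)$ would be infinite, contradicting the Masser--Zannier finiteness theorem~\cite{MasserZannier2013} for our algebraic $\alpha\neq\beta$; thus the displayed inequality is a quantitative companion of that theorem. Consequently $\#T(\alpha,\beta)\ge 2$ forces $\delta(\overline{M}_\alpha,\overline{M}_\beta)\le C/2$, and \emph{Conjecture~\ref{conj4} reduces to a uniform lower bound}: there should be $B$ such that $\max\{h(\alpha),h(\beta)\}>B$ implies $\delta(\overline{M}_\alpha,\overline{M}_\beta)>C/2$. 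At the prime $v=2$ the qualitative content of such a bound is already in this paper: Proposition~\ref{P:psi} and Corollaries~\ref{cor1} and~\ref{C:rat} show that $T(\alpha)$ and $T(\beta)$ are disjoint away from the trivial points as soon as $\alpha$ and $\beta$ are distinct mod~$2$ (mod~$4$ for rational arguments), so $\delta_2$ is then bounded below by a positive constant.

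The main obstacle is precisely this \emph{uniform positive lower bound on $\delta(\overline{M}_\alpha,\overline{M}_\beta)$ as the heights grow} --- equivalently, ruling out an accidental near-coincidence, at all places simultaneously, of the bifurcation measures $\mu_{\alpha,v}$ and $\mu_{\beta,v}$ when $h(\alpha)$, $h(\beta)$ are large. Two ingredients are missing in the needed form. First, one must understand the measures $\mu_{\alpha,v}$, hence the local energies $\delta_v$, uniformly in $\alpha$: for $v=2$ this is exactly the division-polynomial analysis of this paper, but for the remaining primes it is the ``$p$-adic behaviour of the $\psi_n$'' flagged as open at the end of Section~3, and at the archimedean places it requires a genuinely potential-theoretic estimate on the bifurcation measures of the Legendre family. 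Second, one must control the exceptional configurations in which the measures \emph{are} forced to align --- for instance when $(\alpha,\beta)$ approaches one of the ``trivial'' components $B(a,b)=0$ or $B(b,a)=0$ of $\calC$ --- which is an unlikely-intersection phenomenon for the two-parameter Legendre family, the local and quantitative shadow of the Zilber--Pink conjecture. A possible alternative framework is the recent ``uniform Manin--Mumford / Mordell--Lang'' circle of results (height inequalities in families, uniform counting of small points), applied to a suitable abelian scheme attached to the pair of torsion conditions; but it would demand analogous new input and faces the same essential difficulty, namely uniformity in the torsion order $n$.
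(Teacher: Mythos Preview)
The statement labelled Conjecture~\ref{conj4} is presented in the paper as an open \emph{conjecture}, not a theorem: the paper offers no proof whatsoever, only the computational height statistics of Table~\ref{Tableht} as supporting evidence. There is therefore no ``paper's own proof'' against which your proposal can be compared.

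Your proposal is likewise not a proof, and you are candid about this: after the reduction to a uniform positive lower bound on the arithmetic distance $\delta(\overline{M}_\alpha,\overline{M}_\beta)$ as the heights grow, you explicitly list the missing ingredients (uniform control of the bifurcation measures at all places, and handling near-coincidences along the components $B(a,b)=0$ of~$\calC$). As a research strategy this is plausible and in the spirit of recent quantitative equidistribution work, but one point deserves a warning. The displayed inequality
\[
  \#\bigl(T(\alpha)\cap T(\beta)\bigr)\ \le\ \frac{C}{\delta(\overline{M}_\alpha,\overline{M}_\beta)}
\]
with an \emph{absolute} constant~$C$ is not a standard consequence of Yuan--Zhang or of the quantitative Baker--DeMarco machinery: the known bounds control Galois orbits of small points and carry constants that depend on the degrees and heights of the marked points $\alpha,\beta$ (and on the degrees of the line bundles involved). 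Making $C$ absolute is itself part of the uniformity you are after, so invoking such an inequality at the outset risks assuming what you want to prove. In short, the genuine gap is the one you yourself name --- the uniform lower bound on~$\delta$ --- and the paper does not close it either; it merely records numerical evidence in its favour.
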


Here $h \colon \bar{\Q} \to \R_{\ge 0}$ denotes the absolute logarithmic height.

We set $\bar{h}(\alpha) = \bigl(h(\alpha) + h(1-\alpha) + h(1-1/\alpha)\bigr)/3$;
then $\bar{h}$ is invariant under the $S_3$-action on~$\PP^1(\bar{\Q})$. Note
that
\[ \bar{h}(\alpha) - \frac{2}{3} \log 2 \le h(\alpha) \le \bar{h}(\alpha) + \frac{1}{3} \log 2 \]
(with equality on the left for $\alpha = -1$ and on the right for $\alpha \in \{2,\frac{1}{2}\}$),
so that we could formulate an equivalent conjecture using $\bar{h}$ instead of~$h$.
To test Conjecture~\ref{conj4}, we computed $\bar{h}(\alpha) + \bar{h}(\beta)$ for a
representative point $(\alpha, \beta)$ in each $\Gamma$-orbit of points of degree
at most~$13$ that we encountered in our computation. This gave rise to the
statistics in Table~\ref{Tableht}, where we give the distribution of these height sums
according to intervals of length~$1$
for the orbits of given degree (excluding those with $\#T(\alpha, \beta) \ge 3$).

The largest height sum encountered was $\approx 6.723796$, occurring for degree~$4$.
There is no tendency towards a systematic increase of these heights with increasing
degree of the points or with increasing degree of the curves that were intersected.
This lends some credibility to Conjecture~\ref{conj4}.


\begin{bibdiv}
\begin{biblist}

\bib{Magma}{article}{
   author={Bosma, Wieb},
   author={Cannon, John},
   author={Playoust, Catherine},
   title={The Magma algebra system. I. The user language},
   note={Computational algebra and number theory (London, 1993)},
   journal={J. Symbolic Comput.},
   volume={24},
   date={1997},
   number={3-4},
   pages={235--265},
   issn={0747-7171},
   doi={10.1006/jsco.1996.0125},
}

\bib{HabeggerJonesMasser}{misc}{
  author={Habegger, P.},
  author={Jones, G.},
  author={Masser, D.},
  title={Six unlikely intersection problems in search of effectivity},
  date={2015-09-22},
  note={arXiv:1509.06573v1 [math.NT]},
}

\bib{deMarcoWangYe}{misc}{
  author={de Marco, Laura},
  author={Wang, Xiaoguang},
  author={Ye, Hexi},
  title={Torsion points and the Latt\`es family},
  date={2014-10-28},
  note={arXiv:1311.1792v3 [math.DS]; to appear in Amer. J. Math.},
}

\bib{MasserZannier2008}{article}{
   author={Masser, David},
   author={Zannier, Umberto},
   title={Torsion anomalous points and families of elliptic curves},
   language={English, with English and French summaries},
   journal={C. R. Math. Acad. Sci. Paris},
   volume={346},
   date={2008},
   number={9-10},
   pages={491--494},
   issn={1631-073X},
   doi={10.1016/j.crma.2008.03.024},
}

\bib{MasserZannier2010}{article}{
   author={Masser, D.},
   author={Zannier, U.},
   title={Torsion anomalous points and families of elliptic curves},
   journal={Amer. J. Math.},
   volume={132},
   date={2010},
   number={6},
   pages={1677--1691},
   issn={0002-9327},
}

\bib{MasserZannier2012}{article}{
   author={Masser, D.},
   author={Zannier, U.},
   title={Torsion points on families of squares of elliptic curves},
   journal={Math. Ann.},
   volume={352},
   date={2012},
   number={2},
   pages={453--484},
   issn={0025-5831},
   doi={10.1007/s00208-011-0645-4},
}

\bib{MasserZannier2013}{article}{
   author={Masser, David},
   author={Zannier, Umberto},
   title={Bicyclotomic polynomials and impossible intersections},
   language={English, with English and French summaries},
   journal={J. Th\'eor. Nombres Bordeaux},
   volume={25},
   date={2013},
   number={3},
   pages={635--659},
   issn={1246-7405},
}

\bib{MasserZannier2014}{article}{
   author={Masser, D.},
   author={Zannier, U.},
   title={Torsion points on families of products of elliptic curves},
   journal={Adv. Math.},
   volume={259},
   date={2014},
   pages={116--133},
   issn={0001-8708},
   doi={10.1016/j.aim.2014.03.016},
}

\bib{MasserZannierPreprint}{misc}{
  author={Masser, D.},
  author={Zannier, U.},
  title={Torsion points on families of simple abelian surfaces
         and {P}ell's equation over polynomial rings},
  note={(with Appendix by V. Flynn), Preprint},
}

\bib{MavrakiPreprint}{misc}{
  author={Mavraki, Niki Myrto},
  title={Impossible intersections in a Weierstrass family of elliptic curves},
  date={2015-07-25},
  note={arXiv:1507.07047 [math.NT]},
}

\bib{Silverman1983}{article}{
   author={Silverman, Joseph H.},
   title={Heights and the specialization map for families of abelian
   varieties},
   journal={J. Reine Angew. Math.},
   volume={342},
   date={1983},
   pages={197--211},
   issn={0075-4102},
   doi={10.1515/crll.1983.342.197},
}

\bib{ATAEC}{book}{
   author={Silverman, Joseph H.},
   title={Advanced topics in the arithmetic of elliptic curves},
   series={Graduate Texts in Mathematics},
   volume={151},
   publisher={Springer-Verlag, New York},
   date={1994},
   pages={xiv+525},
   isbn={0-387-94328-5},
   doi={10.1007/978-1-4612-0851-8},
}

\bib{Allcurves}{misc}{
  author={Stoll, Michael},
  title={List of all components of {$\calC$} of total degree at most 192},
  note={\newline http://www.mathe2.uni-bayreuth.de/stoll/simultaneous-torsion/allcurves.magma},
}

\bib{Twolambdas}{misc}{
  author={Stoll, Michael},
  title={List of all pairs ($\alpha, \beta$) found with $\#T_{50}(\alpha,\beta) = 2$},
  note={\newline http://www.mathe2.uni-bayreuth.de/stoll/simultaneous-torsion/twolambdas-sizes.magma},
}

\bib{Atleast3}{misc}{
  author={Stoll, Michael},
  title={List of all pairs ($\alpha, \beta$) found with $\#T_{50}(\alpha,\beta) \ge 3$},
  note={\newline http://www.mathe2.uni-bayreuth.de/stoll/simultaneous-torsion/atleast3-all.magma},
}

\bib{ZannierBook}{book}{
   author={Zannier, Umberto},
   title={Some problems of unlikely intersections in arithmetic and geometry},
   note={With appendixes by David Masser},
   series={Annals of Mathematics Studies},
   volume={181},
   publisher={Princeton University Press},
   place={Princeton, NJ},
   date={2012},
   pages={xiv+160},
   isbn={978-0-691-15371-1},
}

\end{biblist}
\end{bibdiv}

\end{document}